 \newtheorem{thm}{Theorem}[section]
 \newtheorem{cor}[thm]{Corollary}
 \newtheorem{lem}[thm]{Lemma}
 \newtheorem{prop}[thm]{Proposition}
 \theoremstyle{definition}
 \newtheorem{defn}[thm]{Definition}
 \newtheorem{ex}[thm]{Example}
 \theoremstyle{remark}
 \newtheorem{rem}[thm]{Remark}
 \numberwithin{equation}{section}
\newcommand{\range}[1]{\mathsf{R}\left( #1 \right)}
\newcommand{\kernel}[1]{\mathsf{N}\left( #1 \right)}
\newcommand{\finspan}[1]{\mathsf{span}\left( #1 \right)}
\newcommand{\norm}[2]{
\left\| #2 \right\|_{#1}
}
\def\Hil{\mathcal{H}}
\newcommand{\BL}[1]{
{\mathcal B} \left( #1 \right)
}
\begin{document}

\title[$U$-cross Gram matrices and their invertibility]
 {$U$-cross Gram matrices and their invertibility}

\author[P. Balazs]{{Peter Balazs}}
\address{ Acoustics Research Institute
 Austrian Academy of Sciences, Vienna, AUSTRIA.}
\email{peter.balazs@oeaw.ac.at}

\author[M. Shamsabadi]{Mitra Shamsabadi}
\address{Department of Mathematics and Computer Sciences, Hakim Sabzevari University, Sabzevar, IRAN.}
\email{mi.shamsabadi@hsu.ac.ir}

\author[A. Arefijamaal]{Ali Akbar Arefijamaal}
\address{Department of Mathematics and Computer Sciences, Hakim Sabzevari University, Sabzevar, IRAN.}
\email{arefijamaal@hsu.ac.ir;arefijamaal@gmail.com}

\author[A. Rahimi]{Asghar Rahimi}
\address{ Department of Mathematics, University of Maragheh,
Maragheh, IRAN.}
\email{rahimi@maragheh.ac.ir}

\subjclass{Primary 41A58; Secondary 43A35
}
\keywords{Frames,  Dual frames, $U$-cross Gram matrices,  Pseudo-inverses, Stability}

\begin{abstract}
The Gram matrix is defined for Bessel sequences by combining
synthesis  with subsequent analysis operators. If different
sequences are used and an operator $U$ is inserted we reach so called
$U$-cross Gram matrices. This can be seen as reinterpretation of
the matrix representation of operators using frames. In this paper
we investigate some necessary or sufficient conditions for
Schatten $p$-class properties and the invertibility of $U$-cross
Gram matrices. In particular, we show that under mild conditions
the pseudo-inverse of a $U$-cross Gram matrix can always be
represented as a $U$-cross Gram matrix with dual frames of the given
ones. We link some properties of $U$-cross Gram matrices to approximate duals.
Finally, we state several stability results. More precisely, it is
shown that the invertibility of $U$-cross Gram matrices is preserved
under small perturbations.
\end{abstract}
\maketitle

\section{Introduction and motivation}
Some operator equations, e.g. in acoustics \cite{Kreuzeretal09} and vibration simulation \cite{kreizxxl1} cannot be treated analytically, but have to be solved numerically. Depending on the problem this can be done using a boundary element method \cite{sauter2010boundary} or finite element method \cite{brennscott1} approach. Thereby
operator equations $O f = b$, are transferred to matrix levels to be able to be treated numerically \cite{sauter2010boundary}.
A standard approach for that, the Galerkin method \cite{HarSch06}, is using orthonormal basis (ONB) $\{e_i\}_{i\in I}$ and investigate the matrix
$M_{k,l} :=\left\langle O e_l, e_k\right\rangle$ \cite{HarSch06} solving $M c = d$  for $d = \{d_l\}_{l\in I}= \left\{\left< b , e_l \right>\right\}_{l\in I}$. 
More recently frames are used for such a discretization \cite{xxlhar18,Stevenson03}. On a more theoretical level, it is well known that operators can be represented by matrices using orthonormal bases \cite{gohberg1}.  Recently,  the theory for frames has been settled for this theoretical approach \cite{xxlframoper1,xxlhar18,xxlriek11}.

Those matrices are constructed by concatenating the given operator $U$ with the synthesis and the analysis operators. Therefore they can be considered as generalizations of Gram matrices.
In this article we study those so called $U$-cross Gram matrices and investigate their invertibility, in particular.
The composition and the invertibility of $U$-cross Gram
matrices are our main questions in this paper. In addition, it
is very natural to ask whether the composition and more intricate and interesting, the inverses of $U$-cross
Gram matrices can be stated as  $U$-cross Gram matrices.
The affirmative answer to these questions will be useful in applied frame theory, mentioned above.
Similar questions are studied  for
frame multipliers, $K$-frame multipliers and fusion frame multipliers in \cite{xxlmult1,mitrak,mitra,uncconv2011, balsto09new}  and matrix representations  \cite{xxlriek11,xxlrieck11,harbr08}.

This paper is built up as follows: In Section \ref{sec:notation} we fix the notation and collect results needed.
In Section \ref{sec:basicdef} we give the basic definition of $U$-cross Gram matrices, some examples, look at Schatten $p$-class properties and investigate this concept for Riesz sequences.
In Section \ref{sec:pivv}  we look at the pseudo-inverses of $U$-cross Gram matrices. In particular, we show under which circumstances this can be written as such a matrix again.
In Section \ref{sec.apprdual} we look at sufficient and necessary conditions on the $U$-cross Gram matrix to imply the involved sequences to be approximate duals. And finally
in Section \ref{sec:stability} we investigate how stable the invertibility of this matrix is regarding the perturbation of the operator or the sequences.

\section{Notation} \label{sec:notation} Throughout this paper, $\mathcal{H}$ is a separable Hilbert space,
$I$ a countable index set and $I_{\mathcal{H}}$ the identity
operator on $\mathcal{H}$. The orthogonal projection on a subspace
$V\subseteq\mathcal{H}$ is denoted by $\pi_V$. We will denote the
set of all linear and bounded operators between Hilbert spaces
$\mathcal{H}_{1}$ and $\mathcal{H}_{2}$ by
$\BL{\mathcal{H}_{1},\mathcal{H}_{2}}$ and for
$\mathcal{H}_{1}=\mathcal{H}_{2}=\mathcal{H}$, it is represented
by $\BL{\mathcal{H}}$.
 We denote the range and the null spaces
of an operator $U \in \BL{\mathcal{H}_{1},\mathcal{H}_{2}}$ by $\range{U}$ and $\kernel{U},$ respectively.  
For a closed range operator $U \in \BL{\mathcal{H}_{1},\mathcal{H}_{2}}$, the pseudo-inverse of $U$ is the unique
operator $U^{\dag} \in \BL{\mathcal{H}_{2},\mathcal{H}_{1}}$ satisfying that
\begin{eqnarray*}
\kernel{U^{\dag}}=\range{U}^{\perp},\
\range{U^{\dag}}=\kernel{U}^{\perp},\
\text{and}\
UU^{\dag}U=U.
\end{eqnarray*}
If $U$ has closed range, then $U^*$ has
closed range and
$\left(U^*\right)^{\dag}=\left(U^{\dag}\right)^{*}$, see e.g.
\cite[Lemma 2.5.2]{Chr08}.

A sequence $\Phi=\{\phi_{i}\}_{i\in I}$ in a separable Hilbert space
$\mathcal{H}$ is a \textit{frame} if there exist constants $A_{\Phi},B_{\Phi}>0$
such that for all $f \in \Hil$ 
\begin{eqnarray} \label{sec:frame1}
A_{\Phi}\left\|f\right\|^{2}\leq \sum_{i\in I}\left|\left\langle f,\phi_{i}\right\rangle\right|^{2}\leq B_{\Phi}\left\|f\right\|^{2}.
\end{eqnarray}
The numbers $A_{\Phi}$ and $B_{\Phi}$ are called the \textit{frame bounds}. If $\{\phi_{i}\}_{i\in I}$ is assumed to satisfy the right hand of \eqref{sec:frame1}, then it is called a \textit{Bessel sequence} with Bessel bound $B_{\Phi}$. We say that a sequence $\{\phi_{i}\}_{i\in I}$ in $\mathcal{H}$ a \textit{frame sequence} if it is a frame for $\overline{\textrm{span}}\{\phi_{i}\}_{i\in I}$.
For a Bessel sequence $\Phi=\{\phi_{i}\}_{i\in I}$, the \textit{synthesis operator} $T_{\Phi}:\ell^{2}\rightarrow \mathcal{H}$ is defined by
\begin{eqnarray*}
T_{\Phi}\{c_{i}\}_{i\in I}=\sum_{i\in I}c_{i}\phi_{i}.
\end{eqnarray*}
 Its adjoint operator $T_{\Phi}^{*}:\mathcal{H}\rightarrow \ell^{2}$; the so called  \textit{analysis operator}  is given by
 \begin{eqnarray*}
 T_{\Phi}^{*}f=\left\{\left\langle f,\phi_{i}\right\rangle\right\}_{i\in I},\qquad.
 \end{eqnarray*}
The operator $S_{\Phi}:\mathcal{H}\rightarrow \mathcal{H}$,
which is defined by $S_{\Phi}f=T_{\Phi}T_{\Phi}^{*}f=\sum_{i\in I}\left\langle f,\phi_{i}\right\rangle \phi_{i}$, for all $f\in \mathcal{H}$, is called the \textit{frame operator}.
For a frame ${\Phi}$ the operator {$T_{\Phi}$}  is onto, {$T^*_{\Phi}$} is one-to-one, and ${S_{\Phi}}$ is positive, self-adjoint and invertible \cite{Chr08}. Also, if $B_{\Phi}$ is the Bessel bound of $\Phi$, then
 \begin{eqnarray*}
\left\|T_{\Phi}c\right\|\leq \sqrt{B_{\Phi}} \|c\|, 
    \end{eqnarray*}
   for every sequence of scalars $c=\{c_{i}\}_{i\in I} \in \ell^2$. Note that those operators can be defined for any sequence \cite{xxlstoeant11} resulting in potential  unbounded operators.
    We call a complete Bessel sequence an \textit{upper semi-frame} \cite{jpaxxl09,antbal12}.

A \textit{dual} for a Bessel sequence $\Phi=\{\phi_{i}\}_{i\in I}\subseteq \mathcal{H}$ is a Bessel sequence $\Psi=\{\psi_{i}\}_{i\in I}$ in $\mathcal{H}$ such that
\begin{eqnarray*}\label{dual}
f=\sum_{i\in I}\left\langle f,\psi_{i}\right\rangle \phi_{i},\qquad(f\in \mathcal{H}). 
\end{eqnarray*}
For a frame $\Phi$ it is obvious to see that the Bessel sequence $\left\{S_{\Phi}^{-1}\phi_{i}\right\}_{i\in I}$ is a dual and is itself a frame again. This dual, denoted by $\widetilde{\Phi}=\left\{\widetilde{\phi_{i}}\right\}_{i\in I}$, is called the \textit{canonical dual}.
Note that this is the only equivalent dual, i.e $\range{T^*_{\Phi}}=\range{T^*_{\widetilde{\Phi}}}$.

Recall that  Bessel sequences $\Phi$ and  $\Psi$  in $\mathcal{H}$ are called  \textit{approximate dual frames}, if
\begin{eqnarray*}
\left\|T_{\Phi}T_{\Psi}^*-I_{\mathcal{H}}\right\|< 1\quad \textrm{or}\quad \left\|T_{\Psi}T_{\Phi}^*-I_{\mathcal{H}}\right\|< 1.
\end{eqnarray*}
Note that if $\Phi$ and $\Psi$ are approximately dual frames, then the operator $T_{\Psi}T_{\Phi}^{*}$ is invertible, in other words $\Phi$ and $\Psi$ are a reproducing pair \cite{spexxl14} or pseudo-dual \cite{liog04}.
Hence each $f\in \mathcal{H}$ has the representation
\begin{eqnarray*}
f=(T_{\Psi}T_{\Phi}^{*})^{-1}T_{\Psi}T_{\Phi}^{*}f=\sum_{i\in I}\left\langle f,\phi_{i}\right\rangle (T_{\Psi}T_{\Phi}^{*})^{-1}\psi_{i}.
\end{eqnarray*}
In particular, $\Phi$ and $(T_{\Psi}T_{\Phi}^{*})^{-1}\Psi$ are a pair of dual frames \cite{appchr}.

A \textit{Riesz basis} for $\mathcal{H}$ is a family of the form $\{Ue_{i}\}_{i\in I}$, where $\{e_{i}\}_{i\in I}$ is an orthonormal basis for $\mathcal{H}$ and $U:\mathcal{H}\rightarrow \mathcal{H}$ is a bounded bijective operator. Every Riesz basis is a frame and has a biorthogonal sequence which is also its unique dual  \cite{Chr08}.
The following proposition will be used in this manuscript.
\begin{prop}\label{Eq.cond.Riesz}\cite{xxlstoeant11,Chr08}
For a sequence $\Phi= \{\phi_{i}\}_{i\in I}$ in $\mathcal{H}$, the following conditions are equivalent:
\begin{enumerate}
\item\label{Re1} $\Phi$ is a Riesz basis for $\mathcal{H}$.
    \item\label{Re2} $\Phi$ is complete in $\mathcal{H}$ and there exist constants $A,B>0$ such that
        \begin{eqnarray}\label{RS}
        A\sum_{i\in I}\left|c_{i}\right|^{2}\leq \left\|\sum_{i\in I}c_{i}\phi_{i}\right\|^{2}\leq B\sum_{i\in I}\left|c_{i}\right|^{2},
        \end{eqnarray}
                for every finite scalar sequence $\{c_{i}\}_{i\in I}$.
                \item\label{Re3}  ${\Phi}$ is a frame and $T_{\Phi}$ is one to one.
        \item\label{Re4} $T_{\Phi}^{*}$ is onto  and ${\Phi}$ is an upper semi frame.
\end{enumerate}
\end{prop}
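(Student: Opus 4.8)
The plan is to establish the four-fold equivalence by the cycle $(\ref{Re1})\Rightarrow(\ref{Re2})\Rightarrow(\ref{Re3})\Rightarrow(\ref{Re4})\Rightarrow(\ref{Re1})$, leaning throughout on two standard operator-theoretic facts: a bounded operator between Hilbert spaces has closed range if and only if its adjoint does, and an operator $T$ is bounded below (i.e.\ injective with closed range) if and only if $T^{*}$ is surjective. These dualities are what let me trade the analytic inequalities in \eqref{RS} against the mapping properties of $T_{\Phi}$ and $T_{\Phi}^{*}$.

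For $(\ref{Re1})\Rightarrow(\ref{Re2})$ I would write $\Phi=\{Ue_{i}\}$ with $\{e_{i}\}$ orthonormal and $U$ bounded and bijective, so that $\sum_{i}c_{i}\phi_{i}=U\bigl(\sum_{i}c_{i}e_{i}\bigr)$ for every finite sequence; since $\|\sum_{i}c_{i}e_{i}\|^{2}=\sum_{i}|c_{i}|^{2}$ and $\|U^{-1}\|^{-1}\|x\|\le\|Ux\|\le\|U\|\,\|x\|$, the bounds \eqref{RS} follow with $A=\|U^{-1}\|^{-2}$ and $B=\|U\|^{2}$, while completeness is immediate from surjectivity of $U$. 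For $(\ref{Re2})\Rightarrow(\ref{Re3})$ the upper estimate in \eqref{RS} shows $T_{\Phi}$ extends to a bounded operator on $\ell^{2}$ with $\|T_{\Phi}\|\le\sqrt{B}$, so $\Phi$ is Bessel and the upper frame inequality holds; the lower estimate makes $T_{\Phi}$ bounded below, hence injective with closed range, and completeness forces $\range{T_{\Phi}}$ to be dense, so $T_{\Phi}$ is onto. Its adjoint $T_{\Phi}^{*}$ is then bounded below, which is exactly the lower frame inequality $A\|f\|^{2}\le\sum_{i}|\langle f,\phi_{i}\rangle|^{2}$; thus $\Phi$ is a frame and $T_{\Phi}$ is one-to-one.

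For $(\ref{Re3})\Rightarrow(\ref{Re4})$ note that a frame is a complete Bessel sequence, i.e.\ an upper semi-frame; moreover, since $\Phi$ is a frame, $T_{\Phi}$ is onto with closed range, so $T_{\Phi}^{*}$ has closed range, and $T_{\Phi}$ being one-to-one forces $\range{T_{\Phi}^{*}}$ to be dense (as $\overline{\range{T_{\Phi}^{*}}}=\kernel{T_{\Phi}}^{\perp}$). A closed dense subspace is the whole space, so $T_{\Phi}^{*}$ is onto.

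The decisive step is $(\ref{Re4})\Rightarrow(\ref{Re1})$, where only the weak hypotheses ``$\Phi$ complete and Bessel'' and ``$T_{\Phi}^{*}$ onto'' are at hand and the full Riesz structure has to be recovered. Here I would invoke the duality above: surjectivity of $T_{\Phi}^{*}$ means its adjoint $(T_{\Phi}^{*})^{*}=T_{\Phi}$ is bounded below, which supplies the lower bound in \eqref{RS}, while the Bessel bound supplies the upper bound, so $(\ref{Re2})$ holds. It then remains to realize $\Phi$ as $\{Ue_{i}\}$: being bounded below and, by completeness, onto, the map $T_{\Phi}\colon\ell^{2}(I)\to\mathcal{H}$ is a bounded bijection, so the two spaces have equal dimension and admit a unitary $W\colon\mathcal{H}\to\ell^{2}(I)$; putting $e_{i}=W^{*}\delta_{i}$ for the canonical basis $\{\delta_{i}\}$ of $\ell^{2}(I)$ and $U=T_{\Phi}W$ gives an orthonormal basis $\{e_{i}\}$ and a bounded bijection $U$ with $Ue_{i}=\phi_{i}$, i.e.\ $(\ref{Re1})$. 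I expect the main obstacle to be precisely this extraction of the lower Riesz bound from mere surjectivity of $T_{\Phi}^{*}$, together with the bookkeeping that guarantees the dimension match needed for the unitary $W$ to exist.
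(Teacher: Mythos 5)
Your proof is correct, but note that there is nothing in the paper to compare it against: the paper states this proposition with a citation to \cite{xxlstoeant11,Chr08} and gives no proof of its own, so any internal argument is necessarily "different" from the paper's treatment. Your cyclic argument $(1)\Rightarrow(2)\Rightarrow(3)\Rightarrow(4)\Rightarrow(1)$ is sound as written: the two dualities you isolate ($T$ bounded below if and only if $T^{*}$ onto, and closed range passing to adjoints) carry every implication, with the density of finitely supported sequences in $\ell^{2}$ silently used to pass from \eqref{RS} to bounds for $T_{\Phi}$ on all of $\ell^{2}$, and with the dimension match in the last step guaranteed because a bounded linear bijection $T_{\Phi}\colon\ell^{2}(I)\to\mathcal{H}$ forces equal Hilbert dimension (automatic here anyway, since the paper assumes $\mathcal{H}$ separable and $I$ countable). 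The standard textbook route, e.g.\ in \cite{Chr08}, differs mainly in the step recovering the Riesz-basis structure: there one fixes an orthonormal basis $\{e_{i}\}_{i\in I}$ at the outset and defines $U$ as the bounded extension of $e_{i}\mapsto\phi_{i}$, the upper bound in \eqref{RS} giving boundedness, the lower bound giving boundedness below, and completeness giving surjectivity; you instead manufacture $U=T_{\Phi}W$ from an auxiliary unitary $W$, which is equivalent but shifts the work onto the surjectivity of $T_{\Phi}^{*}$. What your version buys is uniformity (one operator-theoretic mechanism drives all four implications, and the implication $(4)\Rightarrow(1)$ from the weakest-looking hypotheses is handled head-on); what the textbook version buys is that it never needs to invoke the existence of an abstract unitary between $\mathcal{H}$ and $\ell^{2}(I)$.
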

A sequence $\{\phi_{i}\}_{i\in I}$ satisfying (\ref{RS}) for all finite sequences $\{c_{i}\}_{i\in I}$ is called a \textit{Riesz sequence}.
Therefore a Riesz basis is a complete Riesz sequence.

For more details of frame theory see \cite{framepsycho16,Casaz1,Chr08}.\\

Recall that if $U$ is a compact operator on a separable Hilbert
space $\mathcal{H}$, then there exist orthonormal sets $\{e_{n}\}_{n\in I}$
and $\{\sigma_{n}\}_{n\in I}$ in $\mathcal{H}$ such that
$$Ux=\sum_{n\in I}\lambda_{n}\langle x, e_{n}\rangle\sigma_{n},  $$ for
$x\in \mathcal{H}$, with $\lambda_{n} \in c_0$, i.e. $\lim \limits_{n \rightarrow \infty} \lambda_{n} = 0$. $\lambda_{n}$ is called the $n$th singular
value of $U$. Given $0<p<\infty$, we define the Schatten $p$-class
of $\mathcal{H}$, denoted $S_{p}(\mathcal{H})$, as the space of all compact
operators $U$ on $\mathcal{H}$ for which singular value sequence
$\{\lambda_{n}\}_{n\in I}$ belongs to $\ell^{p}$. In this case, $S_{p}(\mathcal{H})$
is a Banach space with the norm
\begin{eqnarray}\label{norm}
\|U\|_{p}=\left(\sum_{n\in I}|\lambda_{n}|^{p}\right)^{\frac{1}{p}}.
\end{eqnarray}
The Banach space $S_{1}(\mathcal{H})$ is called the\textit{ trace class} of $\mathcal{H}$ and
$S_{2}(\mathcal{H})$ is called the \textit{Hilbert-Schmidt class.}

We know that $U\in S_p(\mathcal{H})$ if and only if
$\{\|{Ue_n}\|\}_{n\in I} {\in \ell^p}$, for all orthonormal bases
$\{e_n\}_{n\in I}$. For $0<p \leq 2$ it is even enough to have the
property for a single orthonormal basis, i.e. $U \in S_p
(\mathcal{H})$ if and only if $\{\|{Ue_n}\|\}_{n\in I}{\in
\ell^p}$, for some orthonormal basis $\{e_n\}_{n\in I}$. It is
proved that $S_p(\mathcal{H})$ is a two sided $*$-ideal of
$\BL{\mathcal{H}}$, that is, a Banach algebra under the norm
(\ref{norm}) and the finite rank operators are dense in
$(S_p(\mathcal{H}), \|.\|_p)$.
This can be extended to operators between separate spaces;
according to Theorem 7.8(c) \cite{weidm80} if $U_1\in
\BL{\mathcal{H}_1,\mathcal{H}_2}$, then $\|U_1 U_2\|_p
\leq\|U_1\|\|U_2\|_p$ and $\|U_2 U_1\|_p\leq\|U_1\|\|U_2\|_p$ for
all $U_2\in S_p(\mathcal{H})$. For more information about these
operators, see \cite{Pi,Schatten,weidm80,zhuo}.

In the following theorem, the trace norm of bounded operators is
computed by orthonormal bases.

\begin{thm}\label{ring}\cite{weidm80}
Let $U\in \BL{\mathcal{H}_{1},\mathcal{H}_{2}}$. Then  $U\in
S_p(\mathcal{H}_1,\mathcal{H}_2)$ if and only if
\begin{eqnarray*}
\left\|U\right\|_{p}=\sup \left( \sum \limits_{i\in I}\left|\left\langle Ue_{i},f_{i}\right\rangle\right|^p \right)^{1/p}  < \infty , 
\end{eqnarray*}
where the supremum is taken over all orthonormal bases $\{e_{i}\}_{i\in I}$ of $\mathcal{H}_{1}$ and $\{f_{i}\}_{i\in I}$ of $\mathcal{H}_{2}$.
\end{thm}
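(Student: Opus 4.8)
The plan is to show that the quantity
\[
M:=\sup\left(\sum_{i\in I}\left|\left\langle Ue_{i},f_{i}\right\rangle\right|^{p}\right)^{1/p},
\]
the supremum being over all orthonormal bases $\{e_{i}\}_{i\in I}$ of $\mathcal{H}_{1}$ and $\{f_{i}\}_{i\in I}$ of $\mathcal{H}_{2}$, coincides with $\|U\|_{p}$ whenever either side is finite; this yields both implications at once. The backbone is the singular value decomposition, and the argument is carried out for $p\ge 1$, the range in which \eqref{norm} is genuinely a norm. Convexity of $t\mapsto t^{p}$ will be the decisive tool.

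Assume first $U\in S_{p}(\mathcal{H}_{1},\mathcal{H}_{2})$, so $U$ is compact with $Ux=\sum_{n}\lambda_{n}\langle x,g_{n}\rangle h_{n}$ for orthonormal systems $\{g_{n}\},\{h_{n}\}$ and $\|U\|_{p}^{p}=\sum_{n}\lambda_{n}^{p}$. For $M\ge\|U\|_{p}$ I would complete $\{g_{n}\}$ and $\{h_{n}\}$ to orthonormal bases $\{e_{i}\},\{f_{i}\}$ indexed by $I$, matching indices on the singular system: then $\langle Ug_{n},h_{n}\rangle=\lambda_{n}$, while on the remaining indices $e_{i}\in\kernel{U}$ forces $\langle Ue_{i},f_{i}\rangle=0$, so the diagonal sum equals exactly $\sum_{n}\lambda_{n}^{p}$. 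For the reverse inequality $M\le\|U\|_{p}$, expand $c_{i}:=\langle Ue_{i},f_{i}\rangle=\sum_{n}\lambda_{n}\langle e_{i},g_{n}\rangle\langle h_{n},f_{i}\rangle$ for arbitrary bases, so that $|c_{i}|\le\sum_{n}\lambda_{n}w_{in}$ with $w_{in}:=|\langle e_{i},g_{n}\rangle|\,|\langle f_{i},h_{n}\rangle|$. Parseval for $\{e_{i}\},\{f_{i}\}$, Bessel for $\{g_{n}\},\{h_{n}\}$, together with Cauchy--Schwarz, show that $(w_{in})$ is doubly substochastic, i.e. $\sum_{i}w_{in}\le1$ and $\sum_{n}w_{in}\le1$. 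Since $\sum_{n}w_{in}\le1$, convexity of $t\mapsto t^{p}$ gives $|c_{i}|^{p}\le\sum_{n}\lambda_{n}^{p}w_{in}$, and summing over $i$ with $\sum_{i}w_{in}\le1$ yields $\sum_{i}|c_{i}|^{p}\le\sum_{n}\lambda_{n}^{p}=\|U\|_{p}^{p}$.

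For the converse, suppose $M<\infty$; it suffices to prove $U$ compact, since the lower bound then gives $\sum_{n}\lambda_{n}^{p}\le M^{p}<\infty$, that is $U\in S_{p}$ with $\|U\|_{p}=M$. If $U$ were not compact, then $U^{*}U$ would admit an infinite-rank spectral projection $E=\chi_{(\delta^{2},\infty)}(U^{*}U)$ for some $\delta>0$, and on $V:=\range{E}$ one has $\|Ux\|\ge\delta\|x\|$. Choosing an orthonormal basis $\{e_{i}\}$ of $V$ consisting of eigenvectors of $|U|$ and setting $f_{i}:=\Omega e_{i}$ via the polar decomposition $U=\Omega|U|$, the $\{f_{i}\}$ are orthonormal and $\langle Ue_{i},f_{i}\rangle\ge\delta$ for the infinitely many indices in $V$; extending to full orthonormal bases makes the diagonal sum diverge, contradicting $M<\infty$.

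The main obstacle is the upper estimate $M\le\|U\|_{p}$: passing from the raw expansion of $c_{i}$ to the doubly substochastic matrix $(w_{in})$ and then invoking the convexity (Jensen) inequality is the heart of the proof. This step is sharp and is exactly where $p\ge 1$ is indispensable; already for a rank-one $U$ one checks that a sufficiently delocalized choice of bases makes $\sum_{i}|c_{i}|^{p}$ arbitrarily large once $p<1$, so the convexity argument admits no relaxation.
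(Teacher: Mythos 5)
The paper offers no proof of this theorem at all --- it is quoted from Weidmann --- so your argument can only be measured against the standard one, and that is essentially what you reproduce: a doubly substochastic matrix plus Jensen's inequality for the upper bound, the singular value decomposition for the lower bound, and a spectral-projection argument for the converse. The two bound computations are correct: double substochasticity of $w_{in}=|\langle e_i,g_n\rangle|\,|\langle f_i,h_n\rangle|$ follows from Cauchy--Schwarz combined with Parseval (over the bases) and Bessel (over the singular systems) exactly as you say, and padding the weights $w_{in}$ to a probability measure makes the convexity step $\bigl(\sum_n \lambda_n w_{in}\bigr)^p\le\sum_n\lambda_n^p w_{in}$ legitimate for $p\ge 1$. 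Your closing observation is also correct and worth stressing: for $0<p<1$ the statement is genuinely false (your rank-one example gives $\sum_i|c_i|^p\ge N\cdot N^{-p}\to\infty$), so the theorem, which the paper invokes for Schatten classes defined for all $0<p<\infty$, must implicitly be read with $p\ge1$; this restriction is not an artifact of your method.

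There is, however, one genuinely false step in your converse. You claim that the infinite-dimensional spectral subspace $V=\range{\chi_{(\delta^2,\infty)}(U^*U)}$ admits an orthonormal basis of eigenvectors of $|U|$. It need not: $|U|$ restricted to $V$ can have purely continuous spectrum (multiplication by $t$ on $L^2[1,2]$, for instance), in which case there are no eigenvectors at all. Fortunately, eigenvectors are not needed. Take \emph{any} orthonormal basis $\{e_i\}$ of $V$ and set $f_i:=\Omega e_i$, where $U=\Omega|U|$ is the polar decomposition. Since $V\subseteq\kernel{U}^{\perp}$, the initial space of the partial isometry $\Omega$, the vectors $f_i$ are orthonormal; and since $|U|e_i$ also lies in $\overline{\range{|U|}}=\kernel{U}^{\perp}$, one gets $\langle Ue_i,f_i\rangle=\langle \Omega|U|e_i,\Omega e_i\rangle=\langle |U|e_i,e_i\rangle\ge\delta$, because the spectral measure of each $e_i$ with respect to $|U|$ is supported in $(\delta,\infty)$. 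With this one-line repair --- together with a small amount of index bookkeeping when you extend the pairs $(e_i,f_i)$ to full orthonormal bases sharing one index set (extend finitely many pairs at a time; the same care is needed in your lower bound when $\kernel{U}$ and $\range{U}^{\perp}$ have different dimensions) --- your proof is complete.
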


 Finally, recall \cite{gr01} that for every matrix operator $M=(M_{k,l})$ on $\ell^2$ we have the mixed norm
 \begin{eqnarray*}
 \| M \|_{p,q} := \left(  \sum \limits_{k\in I} \left( \sum \limits_{l\in I}  \left| M_{k,l} \right|^{q} \right)^{p/q}\right)^{1/p}.
 \end{eqnarray*}
 It is called the Frobenius norm when $p=q=2.$

  We will use the following criterion for the invertibility
of operators.
\begin{prop} \cite{gohberg1}\label{invOP}
Let $U_1:\mathcal{H}\rightarrow \mathcal{H}$ be bounded and invertible on $\mathcal{H}$.
Suppose that $U_2:\mathcal{H}\rightarrow \mathcal{H}$ is a bounded
 operator and $\|U_{2}h-U_{1}h\|\leq \upsilon\|h\|$ for all $h\in \mathcal{H}$,
  where $\upsilon\in[0,\frac{1}{\|U_1^{-1}\|}
)$. Then
 $U_2$ is invertible on $\mathcal{H}$ and
    $U_2^{-1}=\sum_{k=0}^{\infty}[U_1^{-1}(U_1-U_2)]^{k}(U_1)^{-1}$.

\end{prop}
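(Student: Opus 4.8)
The plan is to reduce the statement to the classical Neumann series for a small perturbation of the identity. First I would observe that the pointwise bound $\|U_2 h - U_1 h\| \le \upsilon \|h\|$ for all $h \in \mathcal{H}$ is exactly the operator-norm estimate $\|U_1 - U_2\| \le \upsilon$. The natural move is then to factor out the invertible operator $U_1$ on the left: setting $V := U_1^{-1}(U_1 - U_2)$, a direct computation gives $U_1(I_{\mathcal{H}} - V) = U_1 - (U_1 - U_2) = U_2$, so that $U_2 = U_1(I_{\mathcal{H}} - V)$.

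Next I would estimate the size of $V$. By submultiplicativity of the operator norm, $\|V\| \le \|U_1^{-1}\|\,\|U_1 - U_2\| \le \upsilon\,\|U_1^{-1}\| < 1$, where the strict inequality is precisely the hypothesis $\upsilon \in [0, \tfrac{1}{\|U_1^{-1}\|})$. Since $\|V\| < 1$, the operator $I_{\mathcal{H}} - V$ is invertible with $(I_{\mathcal{H}} - V)^{-1} = \sum_{k=0}^{\infty} V^{k}$, the series converging absolutely in operator norm because $\sum_{k} \|V\|^{k}$ is a convergent geometric series.

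Finally I would assemble the inverse. As $U_1$ and $I_{\mathcal{H}} - V$ are both invertible, so is their product $U_2$, and $U_2^{-1} = (I_{\mathcal{H}} - V)^{-1} U_1^{-1} = \sum_{k=0}^{\infty} V^{k} U_1^{-1} = \sum_{k=0}^{\infty} [U_1^{-1}(U_1 - U_2)]^{k} (U_1)^{-1}$, which is exactly the stated formula.

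There is essentially no serious obstacle here; the argument is the standard perturbation-of-invertibility result, and one could cite it directly. The only points that require a little care are checking the factorization identity $U_2 = U_1(I_{\mathcal{H}} - V)$ algebraically and justifying that the Neumann series converges in the \emph{operator} norm rather than merely strongly, which the geometric bound $\|V\| < 1$ supplies. I would prefer this left-factorization over the symmetric right-factorization $U_2 = (I_{\mathcal{H}} - (U_1 - U_2)U_1^{-1})U_1$ precisely because it yields the series in the exact order asserted in the proposition.
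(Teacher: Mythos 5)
Your proof is correct and is precisely the standard argument: the paper itself offers no proof of this proposition, citing it directly from Gohberg--Goldberg--Kaashoek, and your left-factorization $U_2 = U_1(I_{\mathcal{H}} - V)$ with $V = U_1^{-1}(U_1 - U_2)$, combined with the Neumann series for $(I_{\mathcal{H}} - V)^{-1}$ under the bound $\|V\| \leq \upsilon \|U_1^{-1}\| < 1$, yields the inverse $U_2^{-1} = \sum_{k=0}^{\infty} [U_1^{-1}(U_1 - U_2)]^{k} U_1^{-1}$ in exactly the form asserted. Nothing is missing.
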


\section{$U$-cross Gram matrices \label{sec:basicdef}}
In this section, we define $U$-cross Gram matrices and introduce their properties.
\begin{defn}
Let $\Psi=\{\psi_{i}\}_{i\in I}$  and $\Phi=\{\phi_{i}\}_{i\in I}$
be  Bessel sequences in  Hilbert spaces $\mathcal{H}_{1}$ and
$\mathcal{ H}_{2}$, respectively. For $U\in
\BL{\mathcal{H}_{1},\mathcal{H}_{2}}$, the matrix
$\mathbf{G}_{U,\Phi,\Psi}$ given by
\begin{eqnarray}\label{gram}
\left(\mathbf{G}_{U,\Phi,\Psi}\right)_{i,j}=\left\langle U\psi_{j},\phi_{i}\right\rangle,\qquad\left(i,j\in I\right),
\end{eqnarray}
 is  called the \textit{$U$-cross Gram matrix}. If $\mathcal{H}_{1}=\mathcal{H}_{2}$ and $U=I_{\mathcal{H}_{1}}$ it is called the \textit{cross Gram matrix} and denoted by $\mathbf{G}_{\Phi,\Psi}$. We use $\mathbf{G}_{\Phi}$ for $\mathbf{G}_{\Phi,\Phi}$; the so called \textit{Gram matrix} \cite{Chr08}.
\end{defn}
 Note this is just another viewpoint to the matrix
representation of operators \cite{xxlframoper1}. 
In the next
lemma, we rephrase needed results in that paper for the $U$-cross
Gram matrices viewpoint.
\begin{lem}
Let $\Phi=\{\phi_{i}\}_{i\in I}$ and $\Psi=\{\psi_{i}\}_{i\in I}$
be two Bessel sequences in $\mathcal{H}_{2}$ and
 $\mathcal{H}_{1}$. Also, let $U\in
\BL{\mathcal{H}_{1},\mathcal{H}_{2}}$. The following assertions
hold.
 \begin{enumerate}
\item[(1)]  $\mathbf{G}_{U,\Phi,\Psi}=T_{\Phi}^{*}UT_{\Psi}$. In particular, the $U$-cross Gram matrix $\mathbf{G}_{U,\Phi,\Psi}$ defines a bounded operator on  $\ell^{2}$  and
$
\left\|\mathbf{G}_{U,\Phi,\Psi}\right\|\leq \sqrt{B_{\Phi}B_{\Psi}}\|U\|.
$
\item[(2)] $\left(\mathbf{G}_{U,\Phi,\Psi}\right)^{*}=\mathbf{G}_{U^{*},\Psi,\Phi}$.
\end{enumerate}
\end{lem}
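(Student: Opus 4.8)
The plan is to prove the operator identity in (1) first, since both the boundedness claim and the norm estimate are immediate consequences of it, and then to obtain (2) simply by taking adjoints.

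First I would compute the matrix of the composition $T_{\Phi}^{*}UT_{\Psi}$ with respect to the canonical orthonormal basis $\{\delta_{j}\}_{j\in I}$ of $\ell^{2}$. By definition of the synthesis operator we have $T_{\Psi}\delta_{j}=\psi_{j}$; applying $U$ and then the analysis operator $T_{\Phi}^{*}$ gives $T_{\Phi}^{*}UT_{\Psi}\delta_{j}=\left\{\left\langle U\psi_{j},\phi_{i}\right\rangle\right\}_{i\in I}$. Reading off the $i$-th coordinate shows that the $(i,j)$ entry of $T_{\Phi}^{*}UT_{\Psi}$ is exactly $\left\langle U\psi_{j},\phi_{i}\right\rangle$, which is the defining entry of $\mathbf{G}_{U,\Phi,\Psi}$ in \eqref{gram}. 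Hence the two matrices coincide. The value of this identification is that $T_{\Phi}^{*}UT_{\Psi}$ is manifestly a composition of three bounded maps $T_{\Psi}\colon\ell^{2}\to\mathcal{H}_{1}$, $U\in\BL{\mathcal{H}_{1},\mathcal{H}_{2}}$ and $T_{\Phi}^{*}\colon\mathcal{H}_{2}\to\ell^{2}$, so it is automatically a bounded operator on $\ell^{2}$; this is precisely what endows the formal matrix $\mathbf{G}_{U,\Phi,\Psi}$ with meaning as an operator. The norm bound then follows from submultiplicativity of the operator norm together with the Bessel estimates recalled in Section \ref{sec:notation}, namely $\|T_{\Psi}\|\leq\sqrt{B_{\Psi}}$ and $\|T_{\Phi}^{*}\|=\|T_{\Phi}\|\leq\sqrt{B_{\Phi}}$, giving $\|\mathbf{G}_{U,\Phi,\Psi}\|\leq\|T_{\Phi}^{*}\|\,\|U\|\,\|T_{\Psi}\|\leq\sqrt{B_{\Phi}B_{\Psi}}\,\|U\|$.

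For (2) I would take the Hilbert-space adjoint of the identity just established: $\left(\mathbf{G}_{U,\Phi,\Psi}\right)^{*}=\left(T_{\Phi}^{*}UT_{\Psi}\right)^{*}=T_{\Psi}^{*}U^{*}T_{\Phi}$. Applying the formula of part (1) again, now with the roles of $\Phi$ and $\Psi$ interchanged and $U$ replaced by $U^{*}\in\BL{\mathcal{H}_{2},\mathcal{H}_{1}}$, the right-hand side is exactly $\mathbf{G}_{U^{*},\Psi,\Phi}$, as claimed. As a cross-check one can argue entrywise: $\left(\left(\mathbf{G}_{U,\Phi,\Psi}\right)^{*}\right)_{i,j}=\overline{\left(\mathbf{G}_{U,\Phi,\Psi}\right)_{j,i}}=\overline{\left\langle U\psi_{i},\phi_{j}\right\rangle}=\left\langle U^{*}\phi_{j},\psi_{i}\right\rangle=\left(\mathbf{G}_{U^{*},\Psi,\Phi}\right)_{i,j}$.

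There is no genuine analytic difficulty here; the computation is routine once the operator factorization is in place. The only point demanding care is the index bookkeeping in part (2): one must correctly track which sequence plays the analysis role (the first subscript, entering the second slot of the inner product) and which plays the synthesis role (the second subscript) when reapplying the formula of (1), so that the interchange $\Phi\leftrightarrow\Psi$ accompanying $U\mapsto U^{*}$ is recorded faithfully. This is exactly what the entrywise verification above confirms.
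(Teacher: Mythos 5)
Your proof is correct: the column computation $T_{\Phi}^{*}UT_{\Psi}\delta_{j}=\left\{\left\langle U\psi_{j},\phi_{i}\right\rangle\right\}_{i\in I}$ identifies the matrix with the bounded composition $T_{\Phi}^{*}UT_{\Psi}$, the norm bound follows from $\left\|T_{\Psi}\right\|\leq\sqrt{B_{\Psi}}$ and $\left\|T_{\Phi}^{*}\right\|\leq\sqrt{B_{\Phi}}$, and taking adjoints (with the entrywise cross-check confirming the index bookkeeping) gives (2). The paper itself gives no argument here---it cites \cite{xxlframoper1} for (1) and calls (2) trivial---and your verification is precisely the standard one that the citation encapsulates, so the two approaches coincide in substance.
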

\begin{proof} (1) is shown in \cite{xxlframoper1}, (2) is trivial.
\end{proof}
As in \cite{xxlframoper1} we have the representation of an
operator $U$ by
\begin{equation} \label{eq:oprec}
U=T_{ \Phi^{d}} \mathbf{G}_{U,\Phi,\Psi}T_{ \Psi^d}^*, 
\end{equation}
where $\Phi$ and $\Psi$ are frames with dual frames $\Phi^d$ and $\Psi^d$, respectively.

For any sequence $\Phi$ in $\mathcal{H},\mathbf{G}_{\Phi}$ is a
bounded operator in $\ell^{2}$ if and only if $\Phi$ is a Bessel
sequence \cite{Chr08}. This result, naturally, does not hold for
$\mathbf{G}_{\Phi,\Psi}$ and $\mathbf{G}_{U,\Phi,\Phi}$. For
example, if $\Phi=\left\{ke_{k}\right\}_{k\in I}$,
$\Psi=\left\{\frac{1}{k}e_{k}\right\}_{k\in I}$ and
$U=I_{\mathcal{H}}$, where $\{e_{k}\}_{k\in I}$ is an orthonormal
basis of $\mathcal{H}$, then it is easy to see that
$\mathbf{G}_{U,\Phi,\Psi}$ is a bounded operator in $\ell^{2}$,
however, $\Psi$ is not  Bessel sequence. Moreover, if $U\in
\BL{\mathcal{H}}$ is defined as $Ue_{k}=\frac{1}{k^2}e_{k}$, $k\in
\mathbb{N}$, then $U\Phi=\Psi$, and therefore,
\begin{eqnarray*}
\mathbf{G}_{U,\Phi,\Phi}=T_{\Phi}^*UT_{\Phi}=T_{\Phi}^*T_{\Psi}=I_{\ell^{2}}
\end{eqnarray*}
is bounded, even invertible, but $\Phi$ is not Bessel sequence (For similar examples see \cite{bstable09}).
To find those sequences for which $\mathbf{G}_{\Phi,\Psi}$ (or even $\mathbf{G}_{U,\Phi,\Psi}$) is invertible, is connected to the concepts of reproducing pairs \cite{spexxl14} and pseudo frames \cite{liog04}. 

By these examples we see that even for nice operator $U$ we cannot deduce properties of the sequence $\Phi$ and $\Psi$.
We will investigate the converse in this paper, which properties of $U$ can be deduced from those of $\mathbf{G}_{U,\Phi,\Psi}$ for nice sequences $\Psi$ and $\Phi$.

\begin{rem} Let $\Phi$, $\Psi$, $\Theta$ and $\Xi$ be Bessel sequences in $\mathcal{H}$. Let $U_{1}$  and $U_{2}\in \BL{\mathcal{H}}$. Then
\begin{enumerate}
\item[(1)]
$
   \mathbf{G}_{U_{1},\Phi,\Psi}\mathbf{G}_{U_{2},\Theta,\Xi}=T_{\Phi}^{*}
   U_{1}T_{\Psi}T_{\Theta}^{*}U_{2}T_{\Xi}
 = \mathbf{G}_{\left(U_{1}T_{\Psi}T_{\Theta}^{*}U_{2}\right),\Phi,\Xi}.
   $
\item[(2)]
    $\mathbf{G}_{U_{1},\Phi,\Psi}\mathbf{G}_{U_{2},\Psi,\Xi}=T_{\Phi}^{*}
   U_{1}T_{\Psi}T_{\Psi}^{*} U_{2}T_{\Xi}=T_{\Phi}^{*}
   U_{1}S_{\Psi}U_{2}T_{\Xi}=
    \mathbf{G}_{\left( U_{1}S_{\Psi}U_{2} \right),\Phi,\Xi}$.
\end{enumerate}
Suppose $\Psi$ is a frame, $\Psi^\dagger$ any dual and $\widetilde \Psi$ the canonical dual  of $\Psi$. Let $\Delta=\{\delta_{i}\}_{i\in
I}$  be the standard orthonormal basis of $\ell^{2}$, then we obtain
\cite{xxlframoper1}
\begin{enumerate}
\item[(3)]
    $\mathbf{G}_{U_{1},\Phi,\Psi}\mathbf{G}_{U_{2},\Psi^{\dag},\Xi}=\mathbf{G}_{U_{1}
    ,\Phi,\Psi^{\dag}}\mathbf{G}_{U_{2},\Psi,\Xi}=
    \mathbf{G}_{\left(U_{1} U_{2}\right),\Phi,\Xi}$.
\setcounter{enumi}{3}
\item[(4)] $
    \mathbf{G}_{S_{\Psi},\Psi,\widetilde{\Psi}}=\mathbf{G}_{S_{\Psi},
    \widetilde{\Psi},\Psi}=\mathbf{G}_{\Psi}$.
 \item[(5)]
     $\mathbf{G}_{S_{\Psi}^{-1},\Psi,\widetilde{\Psi}}=
     \mathbf{G}_{S_{\Psi}^{-1},\widetilde{\Psi},\Psi}=
     \mathbf{G}_{\widetilde{\Psi}}$.
      \item[(6)]
          $\mathbf{G}_{T_{\Phi}^{*},\Delta,\Psi}=\mathbf{G}_{\Phi,\Psi}$.
          In fact
\begin{eqnarray*}
      \left(\mathbf{G}_{T_{\Phi}^{*},\Delta,\Psi}\right)_{i,j}&=&
\left\langle T_{\Phi}^{*}\psi_{j},\delta_{i}\right\rangle\\
&=&\sum_{k\in I}\langle \psi_{j},\phi_{k}\rangle\langle \delta_{k},
\delta_{i}\rangle\\
&=&\langle\psi_{j},\phi_{i}\rangle=
      (\mathbf{G}_{\Phi,\Psi})_{i,j}. 
\end{eqnarray*}
\end{enumerate}
Let $\Psi=\{\psi\}_{i\in I}$ be a Riesz basis in $\mathcal{H}$
 then we have 
\begin{enumerate}
\setcounter{enumi}{6} 
\item[(7)]\label{Re8} $
\mathbf{G}_{T_{\Psi}^{*},\Delta,\widetilde{\Psi}}=\mathbf{G}_{S_{\Psi},\widetilde{\Psi},\widetilde{\Psi}}=
\mathbf{G}_{S_{\Psi}^{-1},\Psi,\Psi}=I.
$  More precisely, by using (\ref{gram}) and the biorthogonality of a Riesz basis
and its canonical dual \cite[Theorem 5.5.4]{Chr08}, we
obtain
\begin{eqnarray*}
\left(\mathbf{G}_{T_{\Psi}^{*},\Delta,\widetilde{\Psi}}\right)_{i,j}&=&
\left\langle T_{\Psi}^{*}\widetilde{\psi}_{j},\delta_{i}\right\rangle\\
&=&\left\langle\widetilde{\psi}_{j},\psi_{i}\right\rangle=\delta_{i,j}.
\end{eqnarray*}
The proof of the other statements are obvious by the biorthogonal property.
\end{enumerate}
 \end{rem}

\subsection{Schatten $p$-classes}
An operator $O$ is compact if and only if \cite{weidm80} $\lim \limits_{k \rightarrow \infty} \| O e_k \| = 0$, for all ONBs $\{e_k\}_{k\in I}$. This is true if and only if $\lim \limits_{k \rightarrow \infty} \sum \limits_{l\in I} \left| \left< O e_k, f_l \right> \right|^2 = 0$, for all orthonormal
 bases $\{ e_n \}_{n\in I}$ and $\{ f_n \}_{n\in I}$.
So, using the canonical  basis of $\ell^2$ for our setting, this
  means that if $\mathbf{G}_{U,\Phi,\Psi}$ is compact, then $\lim \limits_{i\rightarrow \infty} \sum \limits_{l\in I}  \left|
\left\langle U\psi_{i},\phi_{l}\right\rangle \right|^2=0$. As $O$
is compact, if only $O^*$ is compact, this is also equivalent to $
\lim \limits_{i\rightarrow \infty} \sum \limits_{l\in I}  \left|
\left\langle U^*\phi_{i},\psi_{l}\right\rangle \right|^2=0 $.
In particular, this implies that $\lim \limits_{i\rightarrow
\infty}  \left\langle U\psi_{i},\phi_{i}\right\rangle =0$.
Naturally, Frobenius matrices correspond to Hilbert-Schmidt
operator \cite{xxlphd1}. Therefore, if $\sum_{i\in I}\sum_{j\in
I}\left|\left\langle
U\psi_{i},\phi_{j}\right\rangle\right|^{2}<\infty$, then
$\mathbf{G}_{U,\Phi,\Psi}$ is Hilbert-Schmidt, and therefore
compact. More generally, this is true if $\|
\mathbf{G}_{U,\Phi,\Psi}\|_{p,2} < \infty$, for $1\le p < \infty$. 
This allows to formulate the following results for Bessel
sequences:
\begin{cor} \label{sec:schattenmat1}
Let $U\in \BL{\mathcal{H}}$, $\Phi=\{\phi_{i}\}_{i\in I}$ and $\Psi=\{\psi_{i}\}_{i\in I}$
be Bessel sequences in $\mathcal{H}$.
Then the following assertions hold.
\begin{enumerate}
\item[(1)]  If the operator $U$ is compact, the matrix $\mathbf{G}_{U,\Phi,\Psi}$ is also compact. In particular, $\lim \limits_{i\rightarrow \infty} \sum \limits_{l\in I}  | \left< U \psi_{i},\phi_{l} \right> |^2=0$.
\item[(2)]  If the operator $U$ is Schatten $p$-class, the matrix $\mathbf{G}_{U,\Phi,\Psi}$ is Schatten $p$-class. In this case $\left(\sum \limits_{i\in I} \left| \left< U \psi_{i},\phi_{i} \right> \right|^p \right)^{1/p}<\infty$ and $\left\|\left< U \psi_{i},\phi_{l} \right> \right\|_{p,2} < \infty$. 
In particular:
\begin{enumerate}
\item[(2a.)] If the operator $U$ is trace-class, then $\mathbf{G}_{U,\Phi,\Psi}$ is trace-class, if and only if  $\sum \limits_{i\in I} \left| \left< U \psi_{i},\phi_{i} \right> \right| < \infty$. 
\item[(2b.)] If the operator $U$ is Hilbert-Schmidt, then $\mathbf{G}_{U,\Phi,\Psi}$ is Hilbert-Schmidt, if and only if $\sum \limits_{i\in I}\sum \limits_{l\in I}\left|\left\langle U\psi_{i},\phi_{l}\right\rangle\right|^{2}<\infty$.
\end{enumerate}
 \end{enumerate}
\end{cor}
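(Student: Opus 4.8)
The plan is to reduce every assertion to the factorization $\mathbf{G}_{U,\Phi,\Psi}=T_\Phi^* U T_\Psi$ furnished by part (1) of the Lemma, and then to feed this through the ideal structure of the compact and Schatten classes, using the Bessel bounds $\|T_\Phi^*\|=\|T_\Phi\|\le\sqrt{B_\Phi}$ and $\|T_\Psi\|\le\sqrt{B_\Psi}$ recalled in Section \ref{sec:notation}. In every case the two outer factors $T_\Phi^*$ and $T_\Psi$ are bounded, and the middle factor $U$ carries the compactness/Schatten hypothesis, so the relevant ideal property transports the property of $U$ to the product.

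For (1), I would observe that the compact operators form a two-sided ideal of $\BL{\mathcal{H}}$, so the product of the compact operator $U$ with the bounded operators $T_\Phi^*$ and $T_\Psi$ is again compact; hence $\mathbf{G}_{U,\Phi,\Psi}$ is compact. The displayed limit is then nothing but the specialization of the compactness criterion recalled immediately before the corollary to the canonical basis $\{\delta_i\}_{i\in I}$ of $\ell^2$: with $O=\mathbf{G}_{U,\Phi,\Psi}$ one has $\langle\mathbf{G}_{U,\Phi,\Psi}\delta_i,\delta_l\rangle=\langle U\psi_i,\phi_l\rangle$, so $\lim_{i\to\infty}\sum_{l\in I}|\langle U\psi_i,\phi_l\rangle|^2=0$.

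For (2), the membership $\mathbf{G}_{U,\Phi,\Psi}\in S_p(\mathcal{H})$ follows from the two-sided Schatten ideal inequalities: applying $\|U_2U_1\|_p\le\|U_1\|\,\|U_2\|_p$ to $U_2=U\in S_p$, $U_1=T_\Psi$ gives $UT_\Psi\in S_p$, and then applying $\|U_1U_2\|_p\le\|U_1\|\,\|U_2\|_p$ to $U_1=T_\Phi^*$, $U_2=UT_\Psi$ yields
\[
\|\mathbf{G}_{U,\Phi,\Psi}\|_p=\|T_\Phi^* U T_\Psi\|_p\le\|T_\Phi^*\|\,\|T_\Psi\|\,\|U\|_p\le\sqrt{B_\Phi B_\Psi}\,\|U\|_p<\infty.
\]
The diagonal estimate is then immediate from Theorem \ref{ring}: taking both orthonormal bases there to be the canonical basis of $\ell^2$ and using $(\mathbf{G}_{U,\Phi,\Psi})_{i,i}=\langle U\psi_i,\phi_i\rangle$ gives $\bigl(\sum_{i\in I}|\langle U\psi_i,\phi_i\rangle|^p\bigr)^{1/p}\le\|\mathbf{G}_{U,\Phi,\Psi}\|_p$.

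The step I expect to require the most care is the mixed-norm bound $\|\langle U\psi_i,\phi_l\rangle\|_{p,2}<\infty$, since this cannot be read off directly from the Schatten norm. Here I would first use that $\Psi$ is Bessel to control each inner ($\ell^2$) sum, $\sum_{l\in I}|\langle U\psi_l,\phi_k\rangle|^2=\sum_{l\in I}|\langle\psi_l,U^*\phi_k\rangle|^2\le B_\Psi\|U^*\phi_k\|^2$, so that $\|\langle U\psi_i,\phi_l\rangle\|_{p,2}^p\le B_\Psi^{p/2}\sum_{k\in I}\|U^*\phi_k\|^p$. It then remains to see that $\sum_{k\in I}\|U^*\phi_k\|^p<\infty$; writing $U^*\phi_k=(U^*T_\Phi)\delta_k$ and noting that $U^*\in S_p$ (the class is $*$-closed) forces $U^*T_\Phi\in S_p(\ell^2,\mathcal{H})$ by the ideal property, the $\ell^p$-summability of $\{\|(U^*T_\Phi)\delta_k\|\}_{k\in I}$ follows by applying the orthonormal-basis characterization of $S_p$ to the positive operator $|U^*T_\Phi|\in S_p(\ell^2)$, for which $\||U^*T_\Phi|\delta_k\|=\|(U^*T_\Phi)\delta_k\|$. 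Finally, (2a) and (2b) are the endpoint cases $p=1$ and $p=2$: for $p=2$ the equivalence is the standard identification of the Hilbert–Schmidt class with the finite-Frobenius-norm matrices, namely $\|\mathbf{G}_{U,\Phi,\Psi}\|_{2,2}^2=\sum_{i\in I}\sum_{l\in I}|\langle U\psi_i,\phi_l\rangle|^2$, while for $p=1$ the trace-class conclusion is supplied by (2) and the absolute summability $\sum_{i\in I}|\langle U\psi_i,\phi_i\rangle|<\infty$ is precisely the $p=1$ diagonal estimate just established.
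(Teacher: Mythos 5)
Most of your proposal is correct and is precisely the paper's argument: the paper's entire proof is the one-line remark that the claim ``follows from the ideal property of the considered operator spaces, as $\mathbf{G}_{U,\Phi,\Psi}=T_{\Phi}^{*}UT_{\Psi}$, as well as the above comments'', and your handling of compactness, of the limit $\lim_{i\rightarrow\infty}\sum_{l\in I}\left|\left\langle U\psi_{i},\phi_{l}\right\rangle\right|^{2}=0$ via the canonical basis of $\ell^{2}$, of Schatten membership with the bound $\left\|\mathbf{G}_{U,\Phi,\Psi}\right\|_{p}\leq\sqrt{B_{\Phi}B_{\Psi}}\left\|U\right\|_{p}$, of the diagonal estimate via Theorem \ref{ring} with both orthonormal bases canonical, and of the endpoint cases (2a), (2b) simply spells out those comments in detail.

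The step you yourself flagged as delicate is, however, a genuine gap, and it cannot be closed. Your argument for $\left\|\left\langle U\psi_{i},\phi_{l}\right\rangle\right\|_{p,2}<\infty$ rests on the implication ``$A\in S_{p}(\ell^{2})$ forces $\{\|A\delta_{k}\|\}_{k\in I}\in\ell^{p}$'' (applied to $A=|U^{*}T_{\Phi}|$). This direction of the orthonormal-basis characterization recalled in Section \ref{sec:notation} is valid only for $p\geq 2$: there, convexity of $t\mapsto t^{p/2}$ applied to $\|Af_{k}\|^{2}=\sum_{n}\lambda_{n}^{2}|\langle f_{k},e_{n}\rangle|^{2}$ gives $\sum_{k}\|Af_{k}\|^{p}\leq\|A\|_{p}^{p}$ for every orthonormal basis $\{f_{k}\}_{k\in I}$. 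For $p<2$ it is false: a rank-one operator $A=\langle\cdot,g\rangle h$ with $\|g\|=\|h\|=1$ has $\sum_{k}\|Af_{k}\|^{p}=\sum_{k}|\langle f_{k},g\rangle|^{p}$, which diverges for a suitable orthonormal basis. Worse, the mixed-norm assertion of the corollary itself fails for $p<2$, so no proof can exist: let $\{e_{i}\}_{i\in I}$ be an orthonormal basis of $\mathcal{H}$, put $\Phi=\Psi=\{e_{i}\}_{i\in I}$, choose a unit vector $g=\sum_{i}c_{i}e_{i}$ with $\{c_{i}\}\in\ell^{2}\setminus\ell^{p}$ (possible since $p<2$), and set $Ux=\langle x,g\rangle e_{1}$. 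Then $U$, and hence $\mathbf{G}_{U,\Phi,\Psi}$, is rank one and so lies in every Schatten class, yet $\langle U\psi_{i},\phi_{l}\rangle=\langle e_{i},g\rangle\langle e_{1},e_{l}\rangle$ vanishes unless $l=1$, whence $\sum_{l}|\langle U\psi_{i},\phi_{l}\rangle|^{2}=|c_{i}|^{2}$ and the mixed norm equals $\left(\sum_{i}|c_{i}|^{p}\right)^{1/p}=\infty$; under your index convention (inner $\ell^{2}$-sum over the $\Psi$-index) the same example works with $U$ replaced by $U^{*}$. To be fair, this defect sits in the paper rather than in your reasoning: the Section \ref{sec:notation} claim is stated there without the restriction $p\geq2$, the paper's one-line proof tacitly relies on it, and the corollary's mixed-norm claim is itself false for $p<2$. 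For $p\geq2$ your argument, including the reduction $\sum_{l}|\langle\psi_{l},U^{*}\phi_{k}\rangle|^{2}\leq B_{\Psi}\|U^{*}\phi_{k}\|^{2}$ and the ideal-property step $U^{*}T_{\Phi}\in S_{p}$, is sound.
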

\begin{proof}
This follows from the ideal property of the considered operator spaces, as $\mathbf{G}_{U,\Phi,\Psi}=T_{\Phi}^{*}UT_{\Psi}$, as well as the above comments.
\end{proof}

For frames we can show equivalent conditions:
\begin{lem}
Let $U\in \BL{\mathcal{H}}$, $\Phi=\{\phi_{i}\}_{i\in I}$ and $\Psi=\{\psi_{i}\}_{i\in I}$
be frames in $\mathcal{H}$.
Then the following assertions hold.

\begin{enumerate}
\item[(1)]  The operator $U$ is compact, if and only if $\mathbf{G}_{U,\Phi,\Psi}$ is compact. In this case $\lim \limits_{i\rightarrow \infty} \sum \limits_{l\in I}  \left| \left\langle U\psi_{i},\phi_{l}\right\rangle \right|^2=0$. 
\item[(2)]  The operator $U$ is in the Schatten $p$-class, if and only if $\mathbf{G}_{U,\Phi,\Psi}$ is Schatten $p$-class. In this case $\left(\sum \limits_{i\in I} \left| \left< U \psi_{i},\phi_{i} \right> \right|^p \right)^{1/p}<\infty$  and $\left\|\left< U \psi_{i},\phi_{l} \right> \right\|_{p,2} < \infty$.
In particular:
\begin{enumerate}
\item[(2a.)] The operator $U$ is trace-class, if and only if $\mathbf{G}_{U,\Phi,\Psi}$ is trace-class, if and only if $\sum \limits_{i\in I} \left| \left\langle U\psi_{i},\phi_{i}\right\rangle \right| < \infty$.
\item[(2b.)] The operator $U$ is Hilbert-Schmidt, if and only if $\mathbf{G}_{U,\Phi,\Psi}$ is Hilbert-Schmidt, if and only if $\sum \limits_{i,l\in I} \left|\left\langle U\psi_{i},\phi_{l}\right\rangle\right|^{2}<\infty$.
\end{enumerate}
 \end{enumerate}
\end{lem}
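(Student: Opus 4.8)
The plan is to reduce everything to two factorizations. On one side $\mathbf{G}_{U,\Phi,\Psi}=T_{\Phi}^{*}UT_{\Psi}$; on the other, since $\Phi,\Psi$ are frames, the reconstruction \eqref{eq:oprec} gives $U=T_{\widetilde{\Phi}}\,\mathbf{G}_{U,\Phi,\Psi}\,T_{\widetilde{\Psi}}^{*}$, where $\widetilde{\Phi},\widetilde{\Psi}$ are the canonical duals (again frames, so all four operators $T_{\Phi}^{*},T_{\Psi},T_{\widetilde{\Phi}},T_{\widetilde{\Psi}}^{*}$ are bounded). The forward implications in (1) and (2) are precisely Corollary \ref{sec:schattenmat1}, since frames are Bessel sequences.

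For the converses I would feed the second factorization into the ideal property of the operator classes. If $\mathbf{G}_{U,\Phi,\Psi}$ is compact, then $U=T_{\widetilde{\Phi}}\,\mathbf{G}_{U,\Phi,\Psi}\,T_{\widetilde{\Psi}}^{*}$ is compact as the product of a compact operator with two bounded ones; identically, $\mathbf{G}_{U,\Phi,\Psi}\in S_p$ forces $\|U\|_{p}\le\|T_{\widetilde{\Phi}}\|\,\|\mathbf{G}_{U,\Phi,\Psi}\|_{p}\,\|T_{\widetilde{\Psi}}^{*}\|<\infty$. This closes the two equivalences, in (1) for compactness and in (2) for the Schatten $p$-class.

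The quantitative statements are read off from the entries $(\mathbf{G}_{U,\Phi,\Psi})_{l,i}=\langle U\psi_i,\phi_l\rangle$. The accompanying bounds in (1) and (2)---namely $\lim_{i\to\infty}\sum_{l\in I}|\langle U\psi_i,\phi_l\rangle|^2=0$, the diagonal bound $\big(\sum_i|\langle U\psi_i,\phi_i\rangle|^p\big)^{1/p}<\infty$, and $\|\langle U\psi_i,\phi_l\rangle\|_{p,2}<\infty$---are exactly the conclusions of Corollary \ref{sec:schattenmat1}, now available since its hypotheses hold. Part (2b) is then immediate: the Frobenius norm is $\|\mathbf{G}_{U,\Phi,\Psi}\|_2^2=\sum_{i,l}|\langle U\psi_i,\phi_l\rangle|^2$, so the double sum is finite precisely when $\mathbf{G}_{U,\Phi,\Psi}$ is Hilbert--Schmidt, which by the $p=2$ equivalence is precisely when $U$ is.

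The step I expect to be the main obstacle is the trace-class characterization (2a). The implication $\mathbf{G}_{U,\Phi,\Psi}\in S_1\Rightarrow\sum_i|\langle U\psi_i,\phi_i\rangle|<\infty$ is the $p=1$ instance of the diagonal bound above, obtained from Theorem \ref{ring} evaluated at the standard basis $\{\delta_i\}$ of $\ell^2$. The reverse, however, cannot be had from a single basis, because in Theorem \ref{ring} the supremum over all orthonormal bases is essential, and the diagonal alone does not control the off-diagonal mass. I would therefore route (2a) through the equivalence already established in (2)---deriving $U\in S_1\iff\mathbf{G}_{U,\Phi,\Psi}\in S_1$ directly from the two factorizations---and record the diagonal sum only as the attendant necessary condition, rather than attempting to recover the full trace norm from the diagonal in isolation.
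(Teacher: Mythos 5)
Your proposal is correct and follows essentially the same route as the paper: the paper's entire proof is the one line ``This follows from above, and Corollary \ref{sec:schattenmat1}'', where ``above'' consists of exactly the two ingredients you use --- the factorization $\mathbf{G}_{U,\Phi,\Psi}=T_{\Phi}^{*}UT_{\Psi}$ together with Corollary \ref{sec:schattenmat1} for the forward directions, and the reconstruction \eqref{eq:oprec} with the canonical duals, combined with the ideal property of the compact and Schatten classes, for the converses.

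One point deserves comment: your caution about (2a) is justified, and there you are in fact more careful than the paper. The implication $\sum_{i}\left|\left\langle U\psi_{i},\phi_{i}\right\rangle\right|<\infty \Rightarrow \mathbf{G}_{U,\Phi,\Psi}\in S_{1}$ is false as stated: take $\Phi=\Psi$ an orthonormal basis and $U$ the unilateral shift, so that $\mathbf{G}_{U,\Phi,\Psi}$ has zero diagonal (hence summable), yet $U$ is not even compact. As you observe, Theorem \ref{ring} evaluated at the standard basis $\{\delta_i\}$ of $\ell^2$ only yields a lower bound for $\left\|\cdot\right\|_{p}$, i.e.\ necessity of the diagonal condition, and the paper's one-line proof never addresses the converse direction of that third equivalence. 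Your decision to establish $U\in S_{1}\Leftrightarrow\mathbf{G}_{U,\Phi,\Psi}\in S_{1}$ from the two factorizations and to record the diagonal summability only as a consequence is the correct reading of what can actually be proved; the defect lies in the lemma's formulation (and already in the phrasing of Corollary \ref{sec:schattenmat1}(2a)), not in your argument. Your treatment of (2b) via the Frobenius norm is sound, since Hilbert--Schmidt membership of a matrix is genuinely equivalent to finiteness of the entrywise square sum.
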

\begin{proof} This follows from above, 
and Corollary \ref{sec:schattenmat1}.
\end{proof}
This generalizes result for operators and frames \cite{bikhzh15}.
Note that $U$ is compact respectively Schatten $p$-class if and only if $U^*$ is. So, the role of $U$ and $U^*$ as well as $\Phi$ and $\Psi$ can be completely switched (for frames).

\subsection{$U$-cross Gram matrices and Riesz bases}

It is apparent that $\Phi$ is an orthonormal basis if and only if $\mathbf{G}_{\Phi}=I_{\ell^{2}}$ as this means that $\Phi$ is biorthogonal to itself.
In the sequel, we discuss the invertibility of $\mathbf{G}_{U,\Phi,\Psi}$ when $\Phi$ and $\Psi$ are Riesz bases.
\begin{prop}
Let $U\in \BL{\Hil},$ $\Phi=\{\phi_{i}\}_{i\in I}$ and $\Psi=\{\psi_{i}\}_{i\in I}$ be two frames in $\mathcal{H}$ and $\Phi^d$ be a dual of $\Phi$. Then
\begin{enumerate}
\item[(1)]  $\mathbf{G}_{U,\Phi,\Psi}=I_{\ell^{2}}$ if and only if $\Phi$ and $\Psi$ are
 Riesz bases. Also, $\Phi=S_{\Phi}U\Psi$ and $\Psi=S_{\Psi}U^*\Phi.$
 In this case $U=T_{\widetilde{\Phi}} T_{\widetilde{\Psi}}^*$ is invertible. 
\item[(2)]  If $\mathbf{G}_{U,\Phi,\Phi^d}=I_{\ell^{2}}$, then $U=I_{\mathcal{H}}$ and $\Phi^d=\widetilde{\Phi}$. The converse is true only if $\Phi$ is a Riesz basis.
\end{enumerate}
\end{prop}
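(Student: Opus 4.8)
The plan is to route everything through the operator factorization $\mathbf{G}_{U,\Phi,\Psi}=T_{\Phi}^{*}UT_{\Psi}$ supplied by the preceding lemma, which converts algebraic statements about the matrix into statements about analysis and synthesis operators. For the forward implication of (1) I would start from $T_{\Phi}^{*}UT_{\Psi}=I_{\ell^{2}}$. Since the identity is onto, its range $\ell^{2}$ is contained in $\range{T_{\Phi}^{*}}$, so $T_{\Phi}^{*}$ is surjective; as a frame is in particular an upper semi-frame, Proposition \ref{Eq.cond.Riesz}(4) forces $\Phi$ to be a Riesz basis. Since the identity is injective, $T_{\Psi}$ is one-to-one, and Proposition \ref{Eq.cond.Riesz}(3) then forces $\Psi$ to be a Riesz basis. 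This is the core of the equivalence.

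Next I would extract the displayed relations. The entrywise identity $\langle U\psi_{j},\phi_{i}\rangle=\delta_{i,j}$ says that $\{U\psi_{j}\}$ is biorthogonal to the Riesz basis $\Phi$; since a Riesz basis has a unique biorthogonal sequence, namely its canonical dual, we obtain $U\psi_{j}=\widetilde{\phi_{j}}=S_{\Phi}^{-1}\phi_{j}$, which is exactly $\Phi=S_{\Phi}U\Psi$. Applying the same reasoning to the adjoint, using $\mathbf{G}_{U,\Phi,\Psi}^{*}=\mathbf{G}_{U^{*},\Psi,\Phi}=I_{\ell^2}$, gives $U^{*}\phi_{j}=\widetilde{\psi_{j}}$, i.e. $\Psi=S_{\Psi}U^{*}\Phi$. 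From $U\psi_{j}=\widetilde{\phi_{j}}$ and completeness of the Riesz basis $\Psi$, testing both sides on each $\psi_{j}$ identifies $U=T_{\widetilde{\Phi}}T_{\widetilde{\Psi}}^{*}$, and invertibility is then immediate because $T_{\widetilde{\Phi}}$ and $T_{\widetilde{\Psi}}^{*}$ are bounded bijections for Riesz bases. For the reverse direction I would read the biconditional together with the relation $\Phi=S_{\Phi}U\Psi$: once $\Phi,\Psi$ are Riesz bases and $U\psi_{i}=\widetilde{\phi_{i}}$, biorthogonality yields $\langle U\psi_{j},\phi_{i}\rangle=\langle\widetilde{\phi_{j}},\phi_{i}\rangle=\delta_{i,j}$, so $\mathbf{G}_{U,\Phi,\Psi}=I_{\ell^{2}}$.

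For (2) I would reduce to (1). First note that a dual $\Phi^{d}$ of the frame $\Phi$ is itself a frame, since $T_{\Phi}T_{\Phi^{d}}^{*}=I_{\mathcal{H}}$ forces a lower frame bound on $\Phi^{d}$; hence part (1) applies to the pair $\Phi,\Phi^{d}$, and the hypothesis $\mathbf{G}_{U,\Phi,\Phi^{d}}=I_{\ell^{2}}$ makes $\Phi$ a Riesz basis. A Riesz basis has a unique dual, equal to its canonical dual, so $\Phi^{d}=\widetilde{\Phi}$. Feeding this into the relation $U\phi^{d}_{j}=\widetilde{\phi_{j}}$ from (1) gives $U\widetilde{\phi_{j}}=\widetilde{\phi_{j}}$ on the complete system $\widetilde{\Phi}$, whence $U=I_{\mathcal{H}}$. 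For the partial converse, with $U=I_{\mathcal{H}}$ and $\Phi^{d}=\widetilde{\Phi}$ one has $\mathbf{G}_{I,\Phi,\widetilde{\Phi}}=T_{\Phi}^{*}S_{\Phi}^{-1}T_{\Phi}$, which is the orthogonal projection of $\ell^{2}$ onto $\range{T_{\Phi}^{*}}$; this equals $I_{\ell^{2}}$ precisely when $T_{\Phi}^{*}$ is onto, that is, exactly when $\Phi$ is a Riesz basis, which is why the converse holds only in that case.

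The main obstacle is not any single computation but pinning down the correct reading of the biconditional in (1): for a general $U$ the naive implication ``$\Phi,\Psi$ Riesz bases $\Rightarrow\mathbf{G}_{U,\Phi,\Psi}=I_{\ell^{2}}$'' is false, so the reverse direction must be interpreted together with the compatibility relation $\Phi=S_{\Phi}U\Psi$ (equivalently $U=T_{\widetilde{\Phi}}T_{\widetilde{\Psi}}^{*}$). The remaining care is bookkeeping: invoking the right uniqueness/biorthogonality statement at each step, and verifying that $\Phi^{d}$ is genuinely a frame so that (1) may legitimately be cited inside the proof of (2).
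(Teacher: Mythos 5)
Your proof is correct, but it establishes the central claim by a different mechanism than the paper. To show $\Phi$ and $\Psi$ are Riesz bases, you factor $I_{\ell^2}=T_{\Phi}^{*}UT_{\Psi}$ and read off that $T_{\Phi}^{*}$ must be surjective and $T_{\Psi}$ injective, then invoke parts (3) and (4) of Proposition \ref{Eq.cond.Riesz}; the paper instead works entrywise, observing that $\langle U\psi_{j},\phi_{i}\rangle=\delta_{ij}$ exhibits $U\Psi$ as a biorthogonal sequence for $\Phi$ (and $U^{*}\Phi$ for $\Psi$), and then uses the fact that a frame possessing a biorthogonal sequence is a Riesz basis — a result outside the toolkit the paper explicitly quotes. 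Likewise, the paper obtains $U=T_{\widetilde{\Phi}}T_{\widetilde{\Psi}}^{*}$ by substituting $\mathbf{G}_{U,\Phi,\Psi}=I_{\ell^2}$ into the reconstruction formula \eqref{eq:oprec}, whereas you derive it by hand, testing $U\psi_{j}=\widetilde{\phi_{j}}$ against the complete system $\Psi$; both are sound, the paper's being shorter, yours more self-contained. Beyond this difference of route, your write-up is more careful at exactly the three points the paper glosses over: (i) the literal ``if'' direction of (1) fails for general $U$ (take $U=0$), and you make explicit that the equivalence must be read jointly with the compatibility relation $\Phi=S_{\Phi}U\Psi$, while the paper proves only the ``only if'' direction and then writes ``this shows (1)''; (ii) you verify that a dual $\Phi^{d}$ of a frame is itself a frame before citing part (1) inside part (2), a hypothesis the paper uses tacitly; (iii) for the final clause of (2) you identify $\mathbf{G}_{I,\Phi,\widetilde{\Phi}}=T_{\Phi}^{*}S_{\Phi}^{-1}T_{\Phi}$ as the orthogonal projection onto $\range{T_{\Phi}^{*}}$, which pinpoints precisely why the converse holds exactly in the Riesz basis case, where the paper says only that ``the converse is clear.''
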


\begin{proof}
If $\mathbf{G}_{U,\Phi,\Psi}=I_{\ell^{2}}$, then
\begin{eqnarray*}
\delta_{ij}=\left(\mathbf{G}_{U,\Phi,\Psi}\right)_{i,j}=\left\langle U\psi_{j},\phi_{i}\right\rangle.
\end{eqnarray*}
Hence, $\Phi$ has a biorthogonal sequence, and therefore it is a
Riesz basis. Also, $\Psi$ is a Riesz basis since $U^{*}\Phi$ is
its biorthogonal sequence. In particular, $\widetilde{\Phi}=U\Psi$
by Theorem 5.5.4 of \cite{Chr08}. By \eqref{eq:oprec}, $
U=T_{\widetilde\Phi}T_{\widetilde\Psi}^*.$ This shows (1).

 By (1) $\Phi$ is a Riesz basis, and has only one, the canonical dual.
Now, the invertibility of  $S_{\Phi}$ implies that $U=I_{\mathcal{H}}$. The converse is clear.
\end{proof}

In the next theorem, we study sufficient conditions  for the invertibility of the $U$-cross Gram matrix associated to Riesz sequences.
 \begin{thm} \label{sec:invertRiesz1}
 Let $U\in \BL{ \Hil_1,\Hil_2}$, $\Phi=\{\phi\}_{i\in I}$ and $\Psi=\{\psi_{i}\}_{i\in I}$ be two
 Bessel sequences in $\mathcal{H}_2$ and $\Hil_1$, respectively, such that $\mathbf{G}_{U,\Phi,\Psi}$ is
  invertible. Then $\Phi$ and $\Psi$ are Riesz sequences in $\mathcal{H}_2$ and $\Hil_1$, respectively.
    If $\Phi$ and $\Psi$ are assumed to be upper semi-frames, $\Phi$ and $\Psi$ are Riesz bases and $U$ is invertible.
    In this case, \begin{equation*} \label{eq:forminvers}
 \left(\mathbf{G}_{U,\Phi,\Psi}\right)^{-1}=\mathbf{G}_{U^{-1},\widetilde{\Psi},
\widetilde{\Phi}}.
\end{equation*}
 \end{thm}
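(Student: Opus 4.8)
The plan is to prove the three assertions in order, using throughout the factorization $\mathbf{G}_{U,\Phi,\Psi}=T_{\Phi}^{*}UT_{\Psi}$ together with its adjoint $(\mathbf{G}_{U,\Phi,\Psi})^{*}=\mathbf{G}_{U^{*},\Psi,\Phi}=T_{\Psi}^{*}U^{*}T_{\Phi}$. For the first claim I would exploit that an invertible operator on $\ell^{2}$ is bounded below: there is a constant $m=1/\|(\mathbf{G}_{U,\Phi,\Psi})^{-1}\|>0$ with $\|\mathbf{G}_{U,\Phi,\Psi}c\|\ge m\|c\|$ for every $c\in\ell^{2}$. Substituting the factorization and using $\|T_{\Phi}^{*}\|=\|T_{\Phi}\|\le\sqrt{B_{\Phi}}$ gives $m\|c\|\le\sqrt{B_{\Phi}}\,\|U\|\,\|T_{\Psi}c\|$, so that $T_{\Psi}$ is bounded below. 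Since $\Psi$ is Bessel, this lower bound together with the Bessel bound is exactly \eqref{RS}, whence $\Psi$ is a Riesz sequence. Applying the identical argument to the (also invertible) adjoint $\mathbf{G}_{U^{*},\Psi,\Phi}=T_{\Psi}^{*}U^{*}T_{\Phi}$ shows that $T_{\Phi}$ is bounded below, so $\Phi$ is a Riesz sequence as well.

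For the second claim, recall that an upper semi-frame is by definition a complete Bessel sequence. Hence $\Phi$ and $\Psi$ are complete and, by the first part, satisfy \eqref{RS}; by Proposition \ref{Eq.cond.Riesz}(\ref{Re2}) they are therefore Riesz bases. For a Riesz basis the synthesis operator is a bounded bijection of $\ell^{2}$ onto the ambient space, so $T_{\Phi}$, $T_{\Phi}^{*}$ and $T_{\Psi}$ are all invertible. Solving the factorization for $U$ then yields $U=(T_{\Phi}^{*})^{-1}\mathbf{G}_{U,\Phi,\Psi}T_{\Psi}^{-1}$, a composition of invertible operators, so $U$ is invertible.

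For the inverse formula I would verify directly that $\mathbf{G}_{U^{-1},\widetilde{\Psi},\widetilde{\Phi}}=T_{\widetilde{\Psi}}^{*}U^{-1}T_{\widetilde{\Phi}}$ is a two-sided inverse. Using the dual-frame identities $T_{\Psi}T_{\widetilde{\Psi}}^{*}=I_{\mathcal{H}_{1}}$ and $T_{\widetilde{\Phi}}T_{\Phi}^{*}=I_{\mathcal{H}_{2}}$, and the biorthogonality of a Riesz basis and its canonical dual (which gives $T_{\Phi}^{*}T_{\widetilde{\Phi}}=I_{\ell^{2}}$ and $T_{\widetilde{\Psi}}^{*}T_{\Psi}=I_{\ell^{2}}$), one computes $\mathbf{G}_{U,\Phi,\Psi}\mathbf{G}_{U^{-1},\widetilde{\Psi},\widetilde{\Phi}}=T_{\Phi}^{*}U(T_{\Psi}T_{\widetilde{\Psi}}^{*})U^{-1}T_{\widetilde{\Phi}}=T_{\Phi}^{*}T_{\widetilde{\Phi}}=I_{\ell^{2}}$, and symmetrically $\mathbf{G}_{U^{-1},\widetilde{\Psi},\widetilde{\Phi}}\mathbf{G}_{U,\Phi,\Psi}=T_{\widetilde{\Psi}}^{*}U^{-1}(T_{\widetilde{\Phi}}T_{\Phi}^{*})UT_{\Psi}=T_{\widetilde{\Psi}}^{*}T_{\Psi}=I_{\ell^{2}}$; the composition identities of the preceding Remark give the same conclusion more compactly. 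I expect the only genuinely delicate step to be the first one: recognizing that invertibility forces $T_{\Phi}$ and $T_{\Psi}$ to be bounded below, not merely injective, and that for a Bessel sequence this lower bound is precisely the missing Riesz-sequence inequality. Once that is in place, the passage from Riesz sequence to Riesz basis via completeness, the invertibility of $U$, and the verification of the inverse formula are all routine manipulations with the synthesis/analysis factorization and the dual-frame relations.
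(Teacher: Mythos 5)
Your proposal is correct and follows essentially the same route as the paper: invertibility of $\mathbf{G}_{U,\Phi,\Psi}=T_{\Phi}^{*}UT_{\Psi}$ forces the synthesis operators to be bounded below (the paper bounds $T_{\Phi}$ via an inner-product estimate and invokes an ``analogue argument'' for $T_{\Psi}$, whereas you bound $T_{\Psi}$ directly from the norm estimate and pass to the adjoint $\mathbf{G}_{U^{*},\Psi,\Phi}$ for $T_{\Phi}$ --- a cosmetic difference), and completeness of upper semi-frames then upgrades the Riesz sequences to Riesz bases, giving $U=(T_{\Phi}^{*})^{-1}\mathbf{G}_{U,\Phi,\Psi}T_{\Psi}^{-1}$ invertible. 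Your explicit two-sided verification of $\left(\mathbf{G}_{U,\Phi,\Psi}\right)^{-1}=\mathbf{G}_{U^{-1},\widetilde{\Psi},\widetilde{\Phi}}$ via the reconstruction and biorthogonality identities is in fact more complete than the paper's proof, which states the formula without deriving it.
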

 \begin{proof}
It is sufficient to show that $T_{\Phi}$
is bounded below.
To see this
 \begin{eqnarray*}
 \left\|d\right\|^{2}&=&\left|\left\langle \mathbf{G}_{U,\Phi,\Psi}\mathbf{G}_{U,\Phi,\Psi}^{-1}d,d\right\rangle\right|\\
 &=&\left|\left\langle T_{\Phi}^{*}UT_{\Psi}\mathbf{G}_{U,\Phi,\Psi}^{-1}d
 ,d\right\rangle\right|\\
 &=&\left|\left\langle T_{\Psi}\mathbf{G}_{U,\Phi,\Psi}^{-1}d,U^{*}T_{\Phi}d
 \right\rangle\right|\\
 &\leq & \sqrt{B_{\Psi}}\left\|\mathbf{G}_{U,\Phi,\Psi}^{-1}\right\|\left\|d\right\|\left\|U^{*}\right\|
 \left\|T_{\Phi}d\right\|,
 \end{eqnarray*}
 for every $d=\{d_{i}\}_{i\in I}\in \ell^{2}$. This follows that
 \begin{eqnarray*}
 \frac{\left\|d\right\|}{\sqrt{B_{\Psi}}\left\|\mathbf{G}_{U,\Phi,\Psi}^{-1}\right\|\left\|U
 \right\|}\leq \left\|T_{\Phi}d\right\|.
 \end{eqnarray*}
 To obtain a lower bound for $\Psi$, an analogue argument can be used.

As $\mathbf{G}_{U,\Phi,\Psi}=T_{\Phi}^* UT_{\Psi}$ it follows that $U$  is invertible for complete sequences.

 \end{proof}
Note that, the invertibility of $\mathbf{G}_{U,\Phi,\Psi}$ does
not imply that $\Phi$ and $\Psi$ are Riesz bases, in general. This is because $\mathbf{G}_{U,\Phi,\Psi}$ can never imply anything about completeness, as the considered space is irrelevant for $\mathbf{G}_{U,\Phi,\Psi}$. For an example assume that
$\{e_{i}\}_{i=1}^{\infty}$ is an orthonormal basis for a
separable Hilbert space $\mathcal{H}$ and $\Phi=\left\{e_{2},e_{3},e_{4},...\right\}$. $\Phi$ is
 non-complete. Still,
\begin{eqnarray*}
\left(\mathbf{G}_{\Phi}\right)_{i,j}=\langle \phi_{j},\phi_{i}\rangle
=\delta_{i,j}.
\end{eqnarray*}
This is even true if one erases countably many elements, for example only considering $\{e_2, e_4, e_6,\dots\}$.

In Theorem \ref{sec:invertRiesz1}, if $\Phi$ and $\Psi$ are Bessel sequences in   finite dimensional\footnote{For finites frames see \cite{xxlfinfram1,caku13}
} Hilbert spaces,
the invertibility
$\mathbf{G}_{U,\Phi,\Psi}$ implies that $\Phi$ and $\Psi$
are Riesz bases and $U$ is invertible operator.
This is because
the invertibility
\begin{eqnarray*}
\mathbf{G}_{U,\Phi,\Psi}=T_{\Phi}^{*}UT_{\Psi}
\end{eqnarray*}
yields $T_{\Phi}^{*}$ is onto and $T_{\Psi}$ is one to one.
Because $\mathcal{H}$ is finite dimensional,  the operators  $T_{\Phi}^{*}$ and
$T_{\Psi}$ are invertible, in particular, $\Phi$ and $\Psi$ are Riesz basis. As a consequence $U$ is also invertible.

The next proposition solves the question of how the above result can be generalized to the existence of a left or right inverses.

\begin{prop}
Let $U\in \BL{ \Hil_1,\Hil_2}$, $\Phi$ and $\Psi$ be Bessel sequences in $\mathcal{H}_2$ and $\Hil_1$, respectively. Then the following assertions are hold.
\begin{enumerate}
\item[(1)]  If $\mathbf{G}_{U,\Phi,\Psi}$ has a right inverse, then $\Phi$ and $U^*\Phi$ are  Riesz sequences. Moreover, if $\Phi$ is  an upper semi-frame, then $\Phi$ is a Riesz basis and $U\Psi$ is a frame.

\item[(2)]  If
$\mathbf{G}_{U,\Phi,\Psi}$ has a left inverse, then $\Psi$ and
$U\Psi$ are  Riesz sequences. Moreover, if $\Psi$ is an upper semi-frame, then
$\Psi$ is a Riesz basis and $U^*\Phi$ is a frame.
\end{enumerate}
\end{prop}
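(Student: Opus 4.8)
The plan is to adapt the bounded-below estimate of Theorem \ref{sec:invertRiesz1} to the one-sided setting, and then to upgrade Riesz sequences to Riesz bases and frames using completeness together with Proposition \ref{Eq.cond.Riesz}. Since (2) is the adjoint of (1) --- a left inverse of $\mathbf{G}_{U,\Phi,\Psi}$ is precisely a right inverse of $(\mathbf{G}_{U,\Phi,\Psi})^* = \mathbf{G}_{U^*,\Psi,\Phi}$ --- I would prove (1) in full and then deduce (2) by applying (1) to the triple $(U^*,\Psi,\Phi)$.

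For (1), let $R \in \BL{\ell^2}$ be a right inverse, so $\mathbf{G}_{U,\Phi,\Psi} R = I_{\ell^2}$. First I would record the refactoring $T_\Phi^* U = (U^* T_\Phi)^* = T_{U^*\Phi}^*$, which gives $\mathbf{G}_{U,\Phi,\Psi} = T_{U^*\Phi}^* T_\Psi$ (and $U^*\Phi$ is Bessel since $U^*$ is bounded). Then for $d \in \ell^2$ I would write $\|d\|^2 = \langle \mathbf{G}_{U,\Phi,\Psi} R d, d\rangle = \langle T_\Psi R d, T_{U^*\Phi} d\rangle$ and apply Cauchy--Schwarz to obtain $\|d\| \le \sqrt{B_\Psi}\,\|R\|\,\|T_{U^*\Phi} d\|$, so $T_{U^*\Phi}$ is bounded below and $U^*\Phi$ is a Riesz sequence. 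Since $\|U^* T_\Phi c\| \le \|U\|\,\|T_\Phi c\|$, this lower bound transfers to $T_\Phi$, so $\Phi$ is a Riesz sequence as well.

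For the ``moreover'' in (1), if $\Phi$ is additionally complete then, being a complete Riesz sequence, it is a Riesz basis by Proposition \ref{Eq.cond.Riesz}(\ref{Re2}); hence $T_\Phi^*$ is boundedly invertible. From $T_\Phi^* U T_\Psi R = I_{\ell^2}$ I would then get $U T_\Psi R = (T_\Phi^*)^{-1}$, whose range is all of $\mathcal{H}_2$; since $\range{U T_\Psi R} \subseteq \range{U T_\Psi}$, the synthesis operator $T_{U\Psi} = U T_\Psi$ is onto, and a Bessel sequence with surjective synthesis operator is a frame, so $U\Psi$ is a frame. Finally, (2) follows by applying (1) to $(U^*,\Psi,\Phi)$, using $(U^*)^* \Psi = U\Psi$ and the identity $(\mathbf{G}_{U,\Phi,\Psi})^* = \mathbf{G}_{U^*,\Psi,\Phi}$.

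I expect the one real obstacle to be the surjectivity step establishing that $U\Psi$ is a frame: it relies on first promoting $\Phi$ to a Riesz basis so that $T_\Phi^*$ becomes invertible, and then transferring surjectivity through the right-inverse identity before invoking the standard ``onto synthesis $\Rightarrow$ frame'' characterization. The Riesz-sequence estimates themselves are a direct transcription of the argument in Theorem \ref{sec:invertRiesz1}.
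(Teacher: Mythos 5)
Your proof is correct, but it reaches the Riesz-sequence conclusion by a different mechanism than the paper's own proof of this proposition. The paper argues softly: writing $\mathbf{G}_{U,\Phi,\Psi}=T_{\Phi}^{*}UT_{\Psi}=T_{U^{*}\Phi}^{*}T_{\Psi}$, a right inverse makes $\mathbf{G}_{U,\Phi,\Psi}$ surjective, and since the range of a composition sits inside the range of its leftmost factor, both $T_{\Phi}^{*}$ and $T_{U^{*}\Phi}^{*}$ are surjective; Proposition \ref{Eq.cond.Riesz} then identifies $\Phi$ and $U^{*}\Phi$ as Riesz sequences. You instead transcribe the quantitative Cauchy--Schwarz estimate from Theorem \ref{sec:invertRiesz1}, with the right inverse $R$ playing the role of $\mathbf{G}_{U,\Phi,\Psi}^{-1}$, to get $\|d\|\le\sqrt{B_{\Psi}}\,\|R\|\,\|T_{U^{*}\Phi}d\|$ and then transfer the lower bound to $T_{\Phi}$ via $\|U^{*}T_{\Phi}d\|\le\|U\|\,\|T_{\Phi}d\|$. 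The two routes are equivalent in substance (a bounded operator is bounded below iff its adjoint is onto), but yours proves the equivalence by hand and has the small bonus of producing explicit constants, e.g. $\left(B_{\Psi}\|R\|^{2}\|U\|^{2}\right)^{-1}$ as a lower Riesz bound for $\Phi$, in the spirit of the stability results of Section \ref{sec:stability}; the paper's version is shorter and makes the structural reason transparent. On the ``moreover'' part the proofs coincide: completeness upgrades $\Phi$ to a Riesz basis via Proposition \ref{Eq.cond.Riesz}, and your identity $UT_{\Psi}R=\left(T_{\Phi}^{*}\right)^{-1}$ is exactly the paper's factorization $T_{U\Psi}=\left(T_{\Phi}^{*}\right)^{-1}\mathbf{G}_{U,\Phi,\Psi}$ read through the right inverse, both concluding with the standard fact that a Bessel sequence with onto synthesis operator is a frame. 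Finally, where the paper dismisses (2) with ``the proof of the second part is similar,'' your reduction of (2) to (1) through $\left(\mathbf{G}_{U,\Phi,\Psi}\right)^{*}=\mathbf{G}_{U^{*},\Psi,\Phi}$ (the adjoint of a left inverse is a right inverse of the adjoint) is a genuinely cleaner bookkeeping device that avoids repeating the argument.
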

\begin{proof}
(1) The assumption shows that $T_{\Phi}^*U T_{\Psi}=T_{U^*\Phi}^* T_{\Psi}$ is surjective, and so $T_\Phi^*$  and $T_{\Phi}^*U=T_{U^*\Phi}^*$ are surjective.
Using Proposition \ref{Eq.cond.Riesz} immediately follows that $\Phi$ and $U^*\Phi$ are Riesz sequences.
Moreover, if $\Phi$ is an upper semi-frame, then $T_{\Phi}^*$ is bijective by Proposition \ref{Eq.cond.Riesz}, and hence
\begin{eqnarray*}
T_{U\Psi}=\left(T_{\Phi}^*\right)^{-1}T_{\Phi}^*T_{U\Psi}=\left(T_{\Phi}^*\right)^{-1}\mathbf{G}_{U,\Phi,\Psi} 
\end{eqnarray*}
has a  bounded right inverse, or equivalently $U\Psi$ is a frame. The proof of the second part is similar.
\end{proof}

\section{The pseudo-inverse of $U$-cross Gram matrices} \label{sec:pivv}

Similar to the case for multipliers \cite{balsto09new} we can show
that there exist duals that allow the representation of the pseudo-inverse as a matrix of the same class.
Note that, from now, we put as an assumption that the $U$-cross Gram matrix has closed range. In Section \ref{sec:closedrange} we put some statements about when this occurs.

\newcommand{\sd}[2]{#1^{\left(U,#2\right)}}
\newcommand{\ds}[2]{#1^{\overline{\left( U,#2\right)}}}
\newcommand{\sda}[2]{#1^{\left(U^*,#2\right)}}
\newcommand{\dsa}[2]{#1^{\overline{\left( U^*,#2\right)}}}

\def\Phisd{\sd{\Phi}{\Psi}}
\def\phisd{\sd{\phi}{\psi}}

\def\Psipd{\ds{\Psi}{\Phi}}
\def\psipd{\ds{\psi}{\phi}}

\begin{thm} \label{inverse Gram}Let $\Psi$ and $\Phi$ be frames in Hilbert
space  $\mathcal{H}$,
  $U\in \BL{\mathcal{H}}$ be an invertible
operator and $\mathbf{G}_{U,\Phi,\Psi}$ have
 closed range. Then the following assertions hold:
\begin{enumerate}
\item[(1)]  There exists a unique dual $\Phisd$ of $\Phi$ such that
\begin{eqnarray*}
\left(\mathbf{G}_{U,\Phi,\Psi}\right)^{\dagger}=\mathbf{G}_{U^{-1},\widetilde{\Psi},\Phisd}.
\end{eqnarray*}
\item[(2)]There exists a unique dual $\Psipd$ 
 of $\Psi$ such that
\begin{eqnarray*}
\left(\mathbf{G}_{U,\Phi,\Psi}\right)^{\dagger}=\mathbf{G}_{U^{-1},\Psipd,\widetilde{\Phi}}.
\end{eqnarray*}
\end{enumerate}
\end{thm}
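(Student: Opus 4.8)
The plan is to produce the pseudo-inverse explicitly as a $U^{-1}$-cross Gram matrix of canonical duals, verify the four Moore--Penrose identities, and then pin down the dual by an injectivity argument; part~(2) is the mirror image of part~(1). Throughout I write $\mathbf{G}:=\mathbf{G}_{U,\Phi,\Psi}=T_{\Phi}^{*}UT_{\Psi}$. Since $T_{\Psi}$ is onto, $U$ is invertible, and $T_{\Phi}^{*}$ is bounded below (hence has closed range), one has $\range{\mathbf{G}}=\range{T_{\Phi}^{*}}$ and $\kernel{\mathbf{G}}=\kernel{T_{\Psi}}$; in particular the closed-range hypothesis is automatic here and $\mathbf{G}^{\dagger}$ is bounded.

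For part~(1) I would take as candidate the matrix $H:=\mathbf{G}_{U^{-1},\widetilde{\Psi},\widetilde{\Phi}}=T_{\widetilde{\Psi}}^{*}U^{-1}T_{\widetilde{\Phi}}=T_{\Psi}^{*}S_{\Psi}^{-1}U^{-1}S_{\Phi}^{-1}T_{\Phi}$, so that the asserted dual is the canonical dual $\Phisd=\widetilde{\Phi}$. To verify $H=\mathbf{G}^{\dagger}$ I would substitute $T_{\widetilde{\Phi}}=S_{\Phi}^{-1}T_{\Phi}$ and $T_{\widetilde{\Psi}}^{*}=T_{\Psi}^{*}S_{\Psi}^{-1}$ and collapse the relevant products using $T_{\Psi}T_{\Psi}^{*}=S_{\Psi}$ and $T_{\Phi}T_{\Phi}^{*}=S_{\Phi}$. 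One finds $\mathbf{G}H=T_{\Phi}^{*}S_{\Phi}^{-1}T_{\Phi}=\pi_{\range{T_{\Phi}^{*}}}$ and $H\mathbf{G}=T_{\Psi}^{*}S_{\Psi}^{-1}T_{\Psi}=\pi_{\range{T_{\Psi}^{*}}}$; both are self-adjoint idempotents, the first with range $\range{\mathbf{G}}$ and the second with range $\kernel{\mathbf{G}}^{\perp}$. Consequently $\mathbf{G}H\mathbf{G}=\mathbf{G}$ and $H\mathbf{G}H=H$, so all four Penrose conditions hold and $H=\mathbf{G}^{\dagger}$.

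Uniqueness I would obtain as follows: if some dual $\Phi^{d}$ of $\Phi$ also satisfies $\mathbf{G}_{U^{-1},\widetilde{\Psi},\Phi^{d}}=\mathbf{G}^{\dagger}$, then $T_{\widetilde{\Psi}}^{*}U^{-1}\left(T_{\Phi^{d}}-T_{\widetilde{\Phi}}\right)=0$; since $T_{\widetilde{\Psi}}^{*}U^{-1}=T_{\Psi}^{*}S_{\Psi}^{-1}U^{-1}$ is injective (a composition of the injective $T_{\Psi}^{*}$ with invertible operators), $T_{\Phi^{d}}=T_{\widetilde{\Phi}}$, whence $\Phi^{d}=\widetilde{\Phi}$. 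For part~(2) I would keep the same operator $H$ but now fix the canonical dual $\widetilde{\Phi}$ in the synthesis slot and read off $\Psipd=\widetilde{\Psi}$; the Penrose verification is identical, and uniqueness now follows by cancelling the surjective operator $U^{-1}T_{\widetilde{\Phi}}$ on the right.

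The only delicate point, and the step I would watch most carefully, is matching the two collapsed products to the orthogonal projections onto \emph{exactly} $\range{\mathbf{G}}$ and $\kernel{\mathbf{G}}^{\perp}$: it is this identification (rather than obtaining merely oblique idempotents) that forces the self-adjointness conditions and simultaneously singles out the canonical duals. The remaining work is routine cancellation with the frame operators $S_{\Phi}$ and $S_{\Psi}$.
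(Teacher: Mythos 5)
Your proof is correct, but it takes a genuinely different route from the paper's, and it in fact proves slightly more. The paper never writes down an explicit candidate: it defines the dual from the pseudo-inverse itself, $\Phi^{(U,\Psi)}:=\{UT_{\Psi}\mathbf{G}^{\dagger}\delta_{i}\}_{i\in I}$, checks that this is a dual of $\Phi$, and then shows that $\mathbf{G}_{U^{-1},\widetilde{\Psi},\Phi^{(U,\Psi)}}$ has the same kernel and range as $\mathbf{G}^{\dagger}$ and satisfies $\mathbf{G}X\mathbf{G}=\mathbf{G}$, which by the defining properties of the pseudo-inverse forces equality; uniqueness is argued exactly as you do, via injectivity of $T_{\widetilde{\Psi}}^{*}U^{-1}$. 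You instead verify the four Moore--Penrose identities for the explicit matrix $H=\mathbf{G}_{U^{-1},\widetilde{\Psi},\widetilde{\Phi}}$; your collapses $\mathbf{G}H=T_{\Phi}^{*}S_{\Phi}^{-1}T_{\Phi}$ and $H\mathbf{G}=T_{\Psi}^{*}S_{\Psi}^{-1}T_{\Psi}$ are correct, and these are indeed the orthogonal projections onto $\range{T_{\Phi}^{*}}=\range{\mathbf{G}_{U,\Phi,\Psi}}$ and onto $\range{T_{\Psi}^{*}}=\kernel{\mathbf{G}_{U,\Phi,\Psi}}^{\perp}$, the identifications being valid precisely because $T_{\Psi}$ is onto, $T_{\Phi}^{*}$ and $T_{\Psi}^{*}$ are bounded below, and $U$ is invertible. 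Your approach buys two things the paper's formulation hides: the closed-range hypothesis is redundant (as you note, $\range{\mathbf{G}_{U,\Phi,\Psi}}=\range{T_{\Phi}^{*}}$ is automatically closed), and the ``unique dual'' is identified concretely as the canonical one --- indeed, substituting your formula into the paper's definition gives $UT_{\Psi}\mathbf{G}^{\dagger}=US_{\Psi}S_{\Psi}^{-1}U^{-1}S_{\Phi}^{-1}T_{\Phi}=T_{\widetilde{\Phi}}$, so $\Phi^{(U,\Psi)}=\widetilde{\Phi}$ and likewise the dual in (2) is $\widetilde{\Psi}$; thus the pseudo-inverse formula here is exactly the Riesz-basis inverse formula of Theorem \ref{sec:invertRiesz1} extended to frames. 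What the paper's more roundabout construction buys is generality: the same recipe $\{UT_{\Psi}\mathbf{G}^{\dagger}\delta_{i}\}_{i\in I}$ still makes sense when $U$ is merely closed-range (Theorem \ref{pseudo Gram}), where the relevant dual lives only on $\range{U}$, is genuinely non-canonical, and uniqueness fails; your formula, which needs $U^{-1}$ and the frame operators inverted on all of $\mathcal{H}$, does not carry over to that setting.
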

\begin{proof}
\begin{enumerate}
\item[(1)]

Note that $\mathbf{G}^{\dag}:=\mathbf{G}_{U,\Phi,\Psi}^{\dag}$
exists and
\begin{eqnarray}\label{salib1}
\kernel{\mathbf{G}^{\dag}}=\left(\range{\mathbf{G}_{U,\Phi,\Psi}}\right)^{\perp}=\left(\range{T_{\Phi}^{*}UT_{\Psi}}\right)^{\perp}=\range{T_{\Phi}^{*}}^{\perp}=
\kernel{T_{\Phi}},
\end{eqnarray}
\begin{eqnarray}\label{salib2}
\range{\mathbf{G}^{\dag}}=\left(\kernel{\mathbf{G}_{U,\Phi, \Psi}}\right)^{\perp}=\left(\kernel{T_{\Phi}^{*}UT_{\Psi}}
\right)^{\perp}=\left(\kernel{T_{\Psi}}
\right)^{\perp}=\range{T_{\Psi}^{*}}.
\end{eqnarray}
\def\phisd{\phi^1{o}}
\def\psisd{\psi^1{o}}
Putting,
\begin{eqnarray}\label{fi}
\Phisd=\{\phi_i^{(U,\Psi)}\}_{i\in
I}:=\{UT_{\Psi}\mathbf{G}^{\dag}\delta_{i}\}_{i\in I},
\end{eqnarray}
where $\{\delta_i\}_{i\in I}$ is the canonical orthonormal basis
of $\ell^2$. Then
\begin{eqnarray*}
T_{\Phisd}T_{\Phi}^*&=&UT_{\Psi}\mathbf{G}^{\dag}T_{\Phi}^{*}\\
 &=&T_{\Phi^d}T_{\Phi}^*UT_{\Psi}\mathbf{G}^{\dag}T_{\Phi}^{*}UT_{\Psi}T_{\Psi^d}^*U^{-1}\\
 &=&T_{\Phi^d}\mathbf{G}_{U,\Phi,\Psi}\mathbf{G}^{\dag}\mathbf{G}_{U,\Phi,\Psi}T_{\Psi^d}^*U^{-1}\\
 &=&T_{\Phi^d}\mathbf{G}_{U,\Phi,\Psi}T_{\Psi^d}^*U^{-1}\\
 &=&T_{\Phi^d}T_{\Phi}^*UT_{\Psi}T_{\Psi^d}^*U^{-1}=I_{\mathcal{H}}.
\end{eqnarray*}

So, $\Phisd$ is a dual of $\Phi$.
Note that for all duals $\Phi^{d}$ and $\Psi^{d}$ of $\Phi$ and $\Psi$, respectively, we have
\begin{eqnarray*}
\mathbf{G}_{U,\Phi,\Psi}\mathbf{G}_{U^{-1},\Psi^{d},\Phi^{d}}\mathbf{G}_{U,\Phi,\Psi}& =&T_{\Phi}^{*}UT_{\Psi}T_{\Psi^d}^{*}U^{-1}T_{\Phi^d}T_{\Phi}^{*}UT_{\Psi}\\
&=&T_{\Phi}^{*}UT_{\Psi}=\mathbf{G}_{U,\Phi,\Psi}.
\end{eqnarray*}

Moreover,
$\kernel{T_{\Phisd}}=\kernel{T_{\Phi}}$. Indeed, by
(\ref{salib1}) and (\ref{fi}) we obtain
 $\kernel{T_{\Phi}}=\kernel{\mathbf{G}^{\dag}}\subseteq
\kernel{T_{\Phisd}}$. For the reverse inclusion, suppose that
$c=\{c_i\}_{i\in I}\in \kernel{T_{\Phisd}}$ and so,
$T_{\Psi}\mathbf{G}^{\dag}c=0.$ On the other hand, by (\ref{salib2})
it follows that
\begin{eqnarray}\label{gsalib}
\mathbf{G^{\dag}}c=T_{\Psi}^*f,
\end{eqnarray}
 for some $f\in \mathcal{H}$. Then
\begin{eqnarray*}
f&=&S_{\Psi}^{-1}T_{\Psi}T_{\Psi}^*f\\
&=&S_{\Psi}^{-1}T_{\Psi}\mathbf{G}^{\dag}c=0.
\end{eqnarray*}
Applying (\ref{gsalib}) and (\ref{salib1}) we have $c\in
\kernel{\mathbf{G}^{\dag}}=\kernel{T_{\Phi}}$.
Furthermore,
\begin{eqnarray*}
\kernel{\mathbf{G}_{U^{-1},\widetilde{\Psi},\Phisd}}&=&
\kernel{T_{\widetilde{\Psi}}^*U^{-1}T_{\Phisd}}\\
&=&\kernel{T_{\Phisd}}\\
&=&\kernel{T_{\Phi}}\\
&=&\kernel{\mathbf{G}^{\dag}}. 
\end{eqnarray*}
Moreover, it follows from (\ref{salib2}) that
\begin{eqnarray*}
\range{\mathbf{G}_{U^{-1}, \widetilde{\Psi},\Phisd}}&=&
\range{T_{ \widetilde{\Psi}}^*U^{-1}T_{\Phisd}} \\
&=&\range{T_{ \widetilde{\Psi}}^*}\\
&=&\range{T_{\Psi}^*}\\
&=&\range{\mathbf{G}^{\dag}}. 
\end{eqnarray*}

Hence, $\mathbf{G}^{\dag}=\mathbf{G}_{U^{-1},\widetilde{\Psi},\Phisd}.$ To
show the uniqueness, assume that $\Phi^{\ddagger}$ is also a dual
of $\Phi$ such that
\begin{eqnarray*}
\mathbf{G}_{U^{-1},\widetilde{\Psi},\Phisd}=\mathbf{G}_{U^{-1},\widetilde{\Psi},\Phi^{\ddagger}}.
\end{eqnarray*}
It follows that  $U^{-1}T_{\Phisd}=U^{-1}T_{\Phi^{\ddagger}}$
and hence, $\Phisd=\Phi^{\ddagger}$. 

The proof of (2) is
similar, using $\ds{\Psi}{\Phi} = \{ \ds{\psi}{\Phi}_i \}_{i\in I}   = \{ U^*T_{\Phi}\left( \mathbf{G}^{\dag} \right)^* \delta_{i} \}_{i\in I}$.
\end{enumerate}
\end{proof}
We have that
$$\ds{\Phi}{\Psi}=\{U^*T_{\Psi}(\mathbf{G}_{U,\Psi,\Phi}^{\dagger})^{*}\delta_i\}_{i\in I}$$
and $(\mathbf{G}_{U,\Psi,\Phi}^{\dagger})^{*}=\mathbf{G}_{U^*,\Phi,\Psi}^{\dagger}$. By comparing $\ds{\Phi}{\Psi}$ and
$$\sd{\Phi}{\Psi}=\{UT_{\Psi}\mathbf{G}_{U,\Phi,\Psi}^{\dagger}\delta_i\}_{i\in I}$$
we obtain that $\sd{\Phi}{\Psi}=\dsa{\Phi}{\Psi}$.

Using the same arguments we can show
\begin{cor} \label{inverseGram2}Let $\Psi$ and $\Phi$ be frames in the Hilbert
spaces  $\mathcal{H}_1$ respectively $\Hil_2$,
  $U\in \BL{\Hil_1,\Hil_2}$  an invertible
operator and $\mathbf{G}_{U,\Phi,\Psi}$ has
 closed range. Then the following assertions hold:
\begin{enumerate}
\item[(1)]  There exists a unique dual $\Phisd$ of $\Phi$ such that
\begin{eqnarray*}
\left(\mathbf{G}_{U,\Phi,\Psi}\right)^{\dagger}=\mathbf{G}_{U^{-1},\widetilde{\Psi},\Phisd}.
\end{eqnarray*}
\item[(2)]There exists a unique dual $\Psipd$ of $\Psi$ such that
\begin{eqnarray*}
\left(\mathbf{G}_{U,\Phi,\Psi}\right)^{\dagger}=\mathbf{G}_{U^{-1},\Psipd,\widetilde{\Phi}}.
\end{eqnarray*}
\end{enumerate}
\end{cor}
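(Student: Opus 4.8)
The plan is to transcribe the proof of Theorem~\ref{inverse Gram} into the two-space setting, the only genuine task being to keep track of which Hilbert space each operator lives on. Under the hypotheses we have $T_\Psi \colon \ell^2 \to \mathcal{H}_1$, $T_\Phi \colon \ell^2 \to \mathcal{H}_2$ and $U \in \BL{\Hil_1,\Hil_2}$ invertible with $U^{-1} \in \BL{\Hil_2,\Hil_1}$, so that $\mathbf{G}_{U,\Phi,\Psi} = T_\Phi^* U T_\Psi$ is again an operator on $\ell^2$ whose pseudo-inverse $\mathbf{G}^\dag := \mathbf{G}_{U,\Phi,\Psi}^\dag$ exists on $\ell^2$ by the closed-range assumption. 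The canonical duals $\widetilde{\Psi}$ and $\widetilde{\Phi}$ now live in $\mathcal{H}_1$ and $\mathcal{H}_2$ respectively, and the representation formula \eqref{eq:oprec}, namely $U = T_{\Phi^d} \mathbf{G}_{U,\Phi,\Psi} T_{\Psi^d}^*$, remains valid because $T_{\Phi^d} T_\Phi^* = I_{\mathcal{H}_2}$ and $T_\Psi T_{\Psi^d}^* = I_{\mathcal{H}_1}$ for any duals $\Phi^d$ of $\Phi$ and $\Psi^d$ of $\Psi$.

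For part (1) I would set $\Phisd = \{ U T_\Psi \mathbf{G}^\dag \delta_i \}_{i\in I}$; since $T_\Psi \mathbf{G}^\dag \delta_i \in \mathcal{H}_1$ and $U$ maps into $\mathcal{H}_2$, this is genuinely a sequence in $\mathcal{H}_2$. The computation $T_{\Phisd} T_\Phi^* = I_{\mathcal{H}_2}$ goes through verbatim: inserting $U = T_{\Phi^d} T_\Phi^* U$ and $T_\Phi^* = T_\Phi^* U T_\Psi T_{\Psi^d}^* U^{-1}$ collapses the product into $T_{\Phi^d} \mathbf{G}_{U,\Phi,\Psi} \mathbf{G}^\dag \mathbf{G}_{U,\Phi,\Psi} T_{\Psi^d}^* U^{-1}$, and the pseudo-inverse identity $\mathbf{G}_{U,\Phi,\Psi}\mathbf{G}^\dag \mathbf{G}_{U,\Phi,\Psi} = \mathbf{G}_{U,\Phi,\Psi}$ reduces this to $U U^{-1} = I_{\mathcal{H}_2}$, so $\Phisd$ is a dual of $\Phi$. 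The kernel and range identities \eqref{salib1} and \eqref{salib2} are unchanged, the orthogonal complements being taken in $\ell^2$; the key point that $\range{T_\Phi^* U T_\Psi}=\range{T_\Phi^*}$ uses surjectivity of $U T_\Psi$, and $\kernel{T_\Phi^* U T_\Psi}=\kernel{T_\Psi}$ uses injectivity of $T_\Phi^* U$, both of which persist since $U$ is invertible. The argument that $\kernel{T_{\Phisd}} = \kernel{T_\Phi}$ then only invokes invertibility of $S_\Psi$ on $\mathcal{H}_1$, and computing $\kernel{\mathbf{G}_{U^{-1},\widetilde\Psi,\Phisd}} = \kernel{T_{\Phisd}} = \kernel{\mathbf{G}^\dag}$ together with $\range{\mathbf{G}_{U^{-1},\widetilde\Psi,\Phisd}} = \range{T_{\widetilde\Psi}^*} = \range{\mathbf{G}^\dag}$ forces $\mathbf{G}^\dag = \mathbf{G}_{U^{-1},\widetilde\Psi,\Phisd}$, exactly as before.

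For uniqueness and for part (2) I would copy the end of the previous proof: if another dual $\Phi^{\ddagger}$ gives the same matrix, then $T_{\widetilde\Psi}^* U^{-1} T_{\Phisd} = T_{\widetilde\Psi}^* U^{-1} T_{\Phi^{\ddagger}}$, and cancelling the injective $T_{\widetilde\Psi}^*$ and the injective $U^{-1}$ yields $\Phisd = \Phi^{\ddagger}$. Part (2) follows either by the symmetric construction $\Psipd = \{ U^* T_\Phi (\mathbf{G}^\dag)^* \delta_i \}_{i\in I} \subseteq \mathcal{H}_1$, or by applying part (1) to the adjoint $\mathbf{G}_{U,\Phi,\Psi}^* = \mathbf{G}_{U^*,\Psi,\Phi}$, using $(\mathbf{G}^*)^\dag = (\mathbf{G}^\dag)^*$ and the fact that $U^* \in \BL{\Hil_2,\Hil_1}$ is again invertible. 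I do not expect any real obstacle: nowhere in the original argument was $\mathcal{H}_1 = \mathcal{H}_2$ used except implicitly through composability of the operators, so the whole proof survives once each $I_{\mathcal{H}}$ is correctly read as $I_{\mathcal{H}_1}$ or $I_{\mathcal{H}_2}$. The only point demanding a moment's care is confirming that every composition — in particular $U T_\Psi$, $T_\Phi^* U$ and $T_{\widetilde\Psi}^* U^{-1}$ — has matching domain and codomain, which it does.
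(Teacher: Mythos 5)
Your proposal is correct and takes essentially the same route as the paper: the paper disposes of this corollary with the single remark ``Using the same arguments we can show,'' i.e.\ by transcribing Theorem~\ref{inverse Gram} into the two-space setting exactly as you do, and your bookkeeping of domains and codomains ($UT_{\Psi}$, $T_{\Phi}^{*}U$, $UU^{-1}=I_{\mathcal{H}_2}$, $T_{\Psi}T_{\Psi^d}^{*}=I_{\mathcal{H}_1}$) is precisely the only new content required. One caution on phrasing: equal kernels and ranges alone do not ``force'' $\mathbf{G}^{\dag}=\mathbf{G}_{U^{-1},\widetilde{\Psi},\Phisd}$ --- the characterization of the pseudo-inverse also requires $\mathbf{G}_{U,\Phi,\Psi}\mathbf{G}_{U^{-1},\widetilde{\Psi},\Phisd}\mathbf{G}_{U,\Phi,\Psi}=\mathbf{G}_{U,\Phi,\Psi}$, which the theorem's proof verifies for arbitrary dual pairs and which your ``exactly as before'' should be understood to import as well.
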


Our next goal is to determine $\mathbf{G}_{U,\Phi,\Psi}^{\dag}$ when the invertibility assumption on $U$ is dropped and it is only assumed to be closed range. In fact, we prove that all
results of the above theorem except the uniqueness are true, assuming additionally that $\range{U^*}=S_{\Psi}\range{U^*}$ or $\range{U}=S_{\Phi}\range{U}$.

For that we first look at frames for the range of an operator. Naturally if $\Phi$ is a frame, $\pi_{\range{U}}\Phi = UU^{\dag}\Phi$ is a frame for $\range{U}$.
  Also $U\Phi$ has the same property:
	\begin{cor} \label{cor} Let $U$ have closed range, and $\Psi$ be a frame with bounds $A_\Psi, B_\Psi$.
Then  $U\Psi = \left( U \psi_k \right)_k$ is a frame for $\range{U}$ with frame bounds $m \cdot A_\Psi$, $M \cdot B_\Psi$.  Here, $m$ is the lower bound of $U$, i.e.  $m \norm{}{f}^2 \le \norm{}{U^* f}^2$ for $f \in\kernel{U^*}^{\perp}$  and $M = \norm{}{U^*}^2$.

We have that $S_{U \Psi}^{-1}={\ {U^*}^\dagger}  S_{\Psi}^{-1} U^\dagger$.
\end{cor}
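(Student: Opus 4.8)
The plan is to prove the two assertions of Corollary~\ref{cor} separately: first the frame bounds for $U\Psi$ over $\range{U}$, then the formula for $S_{U\Psi}^{-1}$.

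For the frame bounds, I would compute the frame inequality directly. For any $f \in \range{U}$ we have $\sum_k |\langle f, U\psi_k\rangle|^2 = \sum_k |\langle U^* f, \psi_k\rangle|^2$, which is exactly the frame quadratic form of $\Psi$ applied to $U^* f$. Applying the frame bounds $A_\Psi, B_\Psi$ of $\Psi$ gives
\begin{eqnarray*}
A_\Psi \norm{}{U^* f}^2 \le \sum_k |\langle f, U\psi_k\rangle|^2 \le B_\Psi \norm{}{U^* f}^2.
\end{eqnarray*}
It now remains to sandwich $\norm{}{U^* f}^2$ between multiples of $\norm{}{f}^2$ for $f \in \range{U}$. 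The upper estimate is immediate: $\norm{}{U^* f}^2 \le \norm{}{U^*}^2 \norm{}{f}^2 = M\norm{}{f}^2$. The lower estimate is where closedness of $\range{U}$ enters: since $\range{U}$ is closed, $\range{U} = \kernel{U^*}^\perp$, so every $f \in \range{U}$ lies in $\kernel{U^*}^\perp$, where by definition of $m$ we have $m\norm{}{f}^2 \le \norm{}{U^* f}^2$. (The existence of a strictly positive such $m$ is exactly the statement that $U^*$ is bounded below on $\kernel{U^*}^\perp$, which holds because $U$—equivalently $U^*$—has closed range.) Combining the two chains yields the claimed bounds $m A_\Psi$ and $M B_\Psi$, and in particular $U\Psi$ is a frame for $\range{U}$.

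For the inverse frame operator formula, I would verify directly that the proposed operator $({U^*}^\dagger S_\Psi^{-1} U^\dagger)$ is a two-sided inverse of $S_{U\Psi}$ on $\range{U}$. Write $S_{U\Psi} = T_{U\Psi} T_{U\Psi}^* = U T_\Psi T_\Psi^* U^* = U S_\Psi U^*$, viewed as an operator on $\range{U}$. The key identities are the pseudo-inverse relations $U^\dagger U = \pi_{\kernel{U}^\perp}$ and $U U^\dagger = \pi_{\range{U}}$, together with the fact, recalled in Section~\ref{sec:notation}, that $(U^*)^\dagger = (U^\dagger)^*$. The plan is to compose and use that on $\range{U}$ the projection $UU^\dagger$ acts as the identity, and that $S_\Psi^{-1} U^\dagger$ lands in $\kernel{U}^\perp$ so that $U U^\dagger$ and $U^\dagger U$ factors collapse appropriately. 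I would check both $S_{U\Psi}\,({U^*}^\dagger S_\Psi^{-1} U^\dagger) = \pi_{\range{U}}$ and the reverse composition equal the identity of $\range{U}$.

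The main obstacle, and the step that requires care, is the bookkeeping of domains and ranges in the inverse formula: the operators $U^\dagger$, ${U^*}^\dagger$, and $S_\Psi^{-1}$ act between different subspaces, and one must track that each projection $UU^\dagger = \pi_{\range{U}}$ and $U^\dagger U = \pi_{\kernel{U}^\perp}$ is applied where it collapses to the identity rather than to a proper projection. In particular one needs $S_\Psi^{-1}$ (an operator on all of $\Hil_1$) to interact correctly with $\range{U^\dagger} = \kernel{U}^\perp$, and here the hypothesis linking $\range{U^*}$ with $S_\Psi\range{U^*}$ (foreshadowed just before the corollary) is what guarantees the composition closes up cleanly; absent an invariance condition the naive cancellation can fail. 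Everything else—the two frame-bound estimates—is routine once closed range is invoked.
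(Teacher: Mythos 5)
Your proof of the frame-bound part is correct and is the standard argument; the paper does not spell this out but simply cites \cite[Proposition 5.3.1]{Chr08}, so there is nothing substantive to compare there.

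For the formula $S_{U\Psi}^{-1}={U^*}^\dagger S_\Psi^{-1}U^\dagger$ your route genuinely differs from the paper's: the paper factors $S_{U\Psi}=(U\sqrt{S_\Psi})(U\sqrt{S_\Psi})^*$ and applies a reverse-order law for pseudo-inverses, namely $S_{U\Psi}^{\dagger}=(\sqrt{S_\Psi}U^*)^\dagger(U\sqrt{S_\Psi})^\dagger$ together with $(U\sqrt{S_\Psi})^\dagger=(\sqrt{S_\Psi})^{-1}U^\dagger$, whereas you verify directly that the candidate is a two-sided inverse on $\range{U}$. Your identification of the obstacle is exactly right: in the composition one meets $U^*{U^*}^\dagger=\pi_{\range{U^*}}$ applied to vectors of $S_\Psi^{-1}\range{U^\dagger}=S_\Psi^{-1}\range{U^*}$, and this collapses to the identity only under the invariance condition $S_\Psi\range{U^*}=\range{U^*}$. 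The problem is that this condition is \emph{not} a hypothesis of the corollary: in the paper it appears only as an additional assumption for Theorem \ref{pseudo Gram}, which comes \emph{after} the corollary, so importing it means you prove a corrected statement rather than the printed one. Moreover this gap cannot be closed, because the printed statement is false. Take $\Hil=\mathbb{C}^2$ with orthonormal basis $\{e_1,e_2\}$, let $U=U^*=U^\dagger$ be the orthogonal projection onto $\finspan{e_1}$, and let $\Psi=\{e_1,\ e_1+e_2\}$. Then $U\Psi=\{e_1,e_1\}$, so $S_{U\Psi}e_1=2e_1$ and $S_{U\Psi}^{-1}e_1=\frac{1}{2}e_1$; on the other hand $S_\Psi^{-1}e_1=e_1-e_2$, so ${U^*}^\dagger S_\Psi^{-1}U^\dagger e_1=U(e_1-e_2)=e_1$. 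Here $S_\Psi\range{U^*}=\finspan{2e_1+e_2}\neq\finspan{e_1}=\range{U^*}$, so the invariance fails, and with it the formula.

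The same defect sits inside the paper's own proof, so your caution is better placed than the source's confidence: $(U\sqrt{S_\Psi})^\dagger=(\sqrt{S_\Psi})^{-1}U^\dagger$ is not an unconditional identity --- the fourth Moore--Penrose condition for the candidate $(\sqrt{S_\Psi})^{-1}U^\dagger$ requires $(\sqrt{S_\Psi})^{-1}\pi_{\range{U^*}}\sqrt{S_\Psi}$ to be self-adjoint, which is equivalent to precisely your invariance condition. The clean repair is to add the hypothesis $S_\Psi\range{U^*}=\range{U^*}$ (automatic, e.g., if $U$ is injective, since then $\range{U^*}=\Hil$, or if $\Psi$ is a tight frame); under it your two-sided verification closes up: ${U^*}^\dagger S_\Psi^{-1}U^\dagger$ maps $\range{U}$ into $\range{U}$ because $\range{{U^*}^\dagger}=\kernel{U^\dagger}^\perp=\range{U}$, and both compositions reduce to $UU^\dagger=\pi_{\range{U}}$, i.e.\ to the identity of $\range{U}$.
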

\begin{proof} 
The first part is \cite[Proposition 5.3.1]{Chr08}.

We have $S_{U\Psi} = U\sqrt{S_\Psi} \sqrt{S_\Psi}  U^*$, therefore the
pseudo-inverse is given by 
\begin{eqnarray*}S_{U \Psi}^{-1}&=&\left(S_{U \Psi}\right)^{\dagger}\\
&=&\left( \sqrt{S_\Psi}U^*\right)^{\dagger} \left(  U\sqrt{S_\Psi} \right)^\dagger\\
&=& {U^*}^\dagger \left(\sqrt{S_\Psi}\right)^{-\frac{1}{2}} \left(\sqrt{S_\Psi}\right)^{-\frac{1}{2}} U^\dagger
= {U^*}^\dagger {S_\Psi}^{-1} U^\dagger .
\end{eqnarray*}
\end{proof}
  
\begin{cor}
Let $U\in \BL{\Hil}$ have closed range, $\Phi$ and $\Psi$ be
frames for $\range U$ and $\range {U^*}$, respectively. Then
$\mathbf{G}_{U,\Phi,\Psi}$ has closed range and
$$\left(\mathbf{G}_{U,\Phi,\Psi}\right)^{\dagger}=\mathbf{G}_{U^{\dagger}_{|_{\range {U}}},\Psipd,\widetilde{\Phi}}=\mathbf{G}_{(U_{|_{\range{U^*}}})^{-1},\Psipd,\widetilde{\Phi}}.$$
\end{cor}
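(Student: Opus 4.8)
The plan is to reduce to the invertible case already settled in Corollary \ref{inverseGram2}. Set $V_1=\range{U^*}=\kernel{U}^{\perp}$ and $V_2=\range{U}=\kernel{U^*}^{\perp}$. Since $U$ has closed range, the splitting $\mathcal{H}=\kernel{U}\oplus\range{U^*}$ shows that the restriction $\tilde U:=U_{|_{V_1}}\colon V_1\to V_2$ is a bounded bijection with bounded inverse, and using $U^{\dagger}U=\pi_{\kernel{U}^{\perp}}$ one checks that this inverse is precisely $\tilde U^{-1}=U^{\dagger}_{|_{\range{U}}}=(U_{|_{\range{U^*}}})^{-1}$; this already identifies the two matrices on the right-hand side of the asserted identity. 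Because $\psi_j\in V_1$ and $\phi_i\in V_2$, we have $\langle U\psi_j,\phi_i\rangle=\langle\tilde U\psi_j,\phi_i\rangle$, so $\mathbf{G}_{U,\Phi,\Psi}=\mathbf{G}_{\tilde U,\Phi,\Psi}$ as operators on $\ell^2$. In this way the data $(\tilde U,\Phi,\Psi)$ fit the hypotheses of Corollary \ref{inverseGram2} with $\Hil_1=V_1$, $\Hil_2=V_2$: here $\tilde U$ is invertible, $\Phi$ is a frame for $V_2$, and $\Psi$ is a frame for $V_1$.

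First I would verify that $\mathbf{G}_{U,\Phi,\Psi}$ has closed range, since this is the part of the statement not directly supplied by the earlier results. Writing $\mathbf{G}_{U,\Phi,\Psi}=T_{\Phi}^{*}\tilde U T_{\Psi}$, note that $T_{\Psi}\colon\ell^2\to V_1$ is onto (as $\Psi$ is a frame for $V_1$) and $\tilde U$ is a bijection, so $\tilde U T_{\Psi}$ maps $\ell^2$ onto $V_2$. Since $\phi_i\in V_2$, the analysis operator satisfies $\range{T_{\Phi}^{*}}=T_{\Phi}^{*}(V_2)$, and $T_{\Phi}^{*}$ restricted to $V_2$ is bounded below by the lower frame bound of $\Phi$, hence has closed range. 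Consequently $\range{\mathbf{G}_{U,\Phi,\Psi}}=T_{\Phi}^{*}(V_2)=\range{T_{\Phi}^{*}}$ is closed.

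With closed range established, Corollary \ref{inverseGram2}(2), applied to the invertible operator $\tilde U$ and the frames $\Phi$, $\Psi$ for $V_2$, $V_1$, furnishes a unique dual $\Psipd$ of $\Psi$ inside $V_1$ with
\begin{eqnarray*}
\left(\mathbf{G}_{\tilde U,\Phi,\Psi}\right)^{\dagger}=\mathbf{G}_{\tilde U^{-1},\Psipd,\widetilde{\Phi}},
\end{eqnarray*}
where $\widetilde{\Phi}$ is the canonical dual of $\Phi$ computed in $V_2$. Substituting $\mathbf{G}_{\tilde U,\Phi,\Psi}=\mathbf{G}_{U,\Phi,\Psi}$ and $\tilde U^{-1}=U^{\dagger}_{|_{\range{U}}}=(U_{|_{\range{U^*}}})^{-1}$ then yields the claimed formula.

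The step I expect to require the most care is the bookkeeping between the ambient and subspace viewpoints: confirming that $U^{\dagger}_{|_{\range{U}}}$ genuinely coincides with the inverse of the bijection $U_{|_{\range{U^*}}}$, and that the pseudo-inverse and canonical duals delivered by Corollary \ref{inverseGram2} — which are computed over the subspaces $V_1$, $V_2$ — are exactly the objects appearing in the statement. One should also note that $\tilde U^{*}=U^{*}_{|_{V_2}}$, so that the dual $\Psipd$ produced by the subspace construction agrees with the one named in the statement. Everything else is a direct transcription of the invertible case, because the index space $\ell^2$ and the matrix entries are unchanged when passing from $U$ to $\tilde U$.
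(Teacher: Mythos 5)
Your proposal is correct and takes essentially the same route as the paper: the paper's proof is precisely the one-line reduction to Corollary \ref{inverseGram2}, applied to the invertible operator $U\colon\range{U^*}\to\range{U}$, together with the identification $U^{\dagger}_{|_{\range{U}}}=(U_{|_{\range{U^*}}})^{-1}$. The only difference is that you spell out what the paper calls ``immediate'' — notably the check that $\mathbf{G}_{U,\Phi,\Psi}=T_{\Phi}^{*}\,U_{|_{\range{U^*}}}T_{\Psi}$ has closed range (needed before Corollary \ref{inverseGram2} can be invoked, since closed range is a hypothesis there) and that the matrix entries are unchanged when $U$ is replaced by its restriction — and these details are carried out correctly.
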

\begin{proof}
It follows immediately by using Corollary \ref{inverseGram2} for
the invertible operator  $U:\range {U^*}\rightarrow \range U$
and the fact that  $U^{\dagger}_{|_{\range
{U}}}=(U_{|_{\range{U^*}}})^{-1}$.
\end{proof}

\begin{thm}\label{pseudo Gram}
Let $\Psi$ and $\Phi$ be frames in Hilbert space $\mathcal{H}$,
$U\in \BL{\mathcal{H}}$ a closed range operator and
$\mathbf{G}_{U,\Phi,\Psi}$ have
 closed range. 
\begin{enumerate}
\item[(1)] The following assertions are equivalent:
\item There exists a  dual $\Phisd$ of $\Phi$  on $\range{U}$ such that
\begin{eqnarray*}
\left(\mathbf{G}_{U,\Phi,\Psi}\right)^{\dagger}=\mathbf{G}_{U^{\dag},\widetilde{\Psi},\Phisd}.
\end{eqnarray*}
\item\label{iii}$
 \left(\mathbf{G}_{U,\Phi,\Psi}\right)^{\dagger}=
 \mathbf{G}_{U^{\dagger},\widetilde{\Psi},\widetilde{UU^{\dagger}\Phi}}
$.  \item$\range{U^*}=S_{\Psi}\range{U^*}$.
\end{enumerate}
\begin{enumerate}
\item[(2)] The following assertions are equivalent:
 \item\label{ii} There exists a  dual $\Psipd$ of $\Psi$ on $\range{U^*}$  such that
\begin{eqnarray*}
\left(\mathbf{G}_{U,\Phi,\Psi}\right)^{\dagger}=\mathbf{G}_{U^{\dagger},{\Psipd},\widetilde{\Phi}}.
\end{eqnarray*}
\item $ \left(\mathbf{G}_{U,\Phi,\Psi}\right)^{\dagger}=
\mathbf{G}_{U^{\dagger},\widetilde{U^{\dagger}U\Psi},\widetilde{\Phi}}.$
 \item \label{i}  $\range{U}=S_{\Phi}\range{U}$.
\end{enumerate}
\end{thm}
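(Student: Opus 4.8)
The plan is to prove the three conditions in part (1) equivalent via the cycle (iii)$\Rightarrow$(ii)$\Rightarrow$(i)$\Rightarrow$(iii), and then to obtain part (2) from part (1) by passing to adjoints. Throughout I write $\mathbf{G}=\mathbf{G}_{U,\Phi,\Psi}=T_{\Phi}^{*}UT_{\Psi}$, put $\Phi'=UU^{\dagger}\Phi=\pi_{\range{U}}\Phi$ (which is a frame for $\range U$, as noted above), and let $\mathbf{H}=\mathbf{G}_{U^{\dagger},\widetilde{\Psi},\widetilde{UU^{\dagger}\Phi}}=T_{\widetilde{\Psi}}^{*}U^{\dagger}T_{\widetilde{\Phi'}}$ be the candidate appearing in (ii). I verify $\mathbf{H}=\mathbf{G}^{\dagger}$ through the four Moore--Penrose equations, which are equivalent to the defining kernel/range/product properties of the pseudo-inverse recalled in Section \ref{sec:notation}.

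The computational core is that three of the four equations hold with no extra hypothesis. Using $T_{\Psi}T_{\widetilde{\Psi}}^{*}=I_{\mathcal H}$, the identities $T_{\Phi}^{*}UU^{\dagger}=T_{\Phi'}^{*}$ and $T_{\Phi}^{*}U=T_{\Phi'}^{*}U$, and the reproducing identity $T_{\widetilde{\Phi'}}T_{\Phi'}^{*}=\pi_{\range U}$ for the dual pair $(\Phi',\widetilde{\Phi'})$ in $\range U$, one computes $\mathbf{G}\mathbf{H}=T_{\Phi'}^{*}T_{\widetilde{\Phi'}}=T_{\Phi'}^{*}S_{\Phi'}^{-1}T_{\Phi'}$, which is the orthogonal projection onto $\range{T_{\Phi'}^{*}}=\range{\mathbf G}$; hence $\mathbf{G}\mathbf{H}$ is self-adjoint and $\mathbf{G}\mathbf{H}\mathbf{G}=\mathbf{G}$ automatically. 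Likewise $\mathbf{H}\mathbf{G}=T_{\widetilde{\Psi}}^{*}U^{\dagger}UT_{\Psi}=T_{\widetilde{\Psi}}^{*}\pi_{\range{U^{*}}}T_{\Psi}$, and $\pi_{\range{U^{*}}}U^{\dagger}=U^{\dagger}$ gives $\mathbf{H}\mathbf{G}\mathbf{H}=\mathbf{H}$. Thus $\mathbf{H}=\mathbf{G}^{\dagger}$ if and only if the one remaining equation $(\mathbf{H}\mathbf{G})^{*}=\mathbf{H}\mathbf{G}$ holds.

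The crux is therefore to show $(\mathbf{H}\mathbf{G})^{*}=\mathbf{H}\mathbf{G}\iff$(iii). Writing $T_{\widetilde{\Psi}}^{*}=T_{\Psi}^{*}S_{\Psi}^{-1}$, self-adjointness of $\mathbf{H}\mathbf{G}=T_{\Psi}^{*}S_{\Psi}^{-1}\pi_{\range{U^{*}}}T_{\Psi}$ is equivalent to $T_{\Psi}^{*}\bigl(S_{\Psi}^{-1}\pi_{\range{U^{*}}}-\pi_{\range{U^{*}}}S_{\Psi}^{-1}\bigr)T_{\Psi}=0$; cancelling $T_{\Psi}^{*}$ on the left by its left inverse $T_{\widetilde{\Psi}}$ and $T_{\Psi}$ on the right by its right inverse $T_{\widetilde{\Psi}}^{*}$ reduces this to $S_{\Psi}^{-1}\pi_{\range{U^{*}}}=\pi_{\range{U^{*}}}S_{\Psi}^{-1}$, i.e. to $S_{\Psi}$ commuting with the projection onto the closed subspace $\range{U^{*}}=\kernel U^{\perp}$. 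As $S_{\Psi}$ is bijective and self-adjoint, this commutation is equivalent to $S_{\Psi}\range{U^{*}}=\range{U^{*}}$, which is exactly (iii), giving (iii)$\Rightarrow$(ii). The step (ii)$\Rightarrow$(i) is immediate, since $\widetilde{UU^{\dagger}\Phi}$ is a dual of $\Phi$ on $\range U$. For (i)$\Rightarrow$(iii) I repeat the computation of $\mathbf{H}\mathbf{G}$ with the given dual $\Phi^{(U,\Psi)}$ in place of $\widetilde{\Phi'}$: the only property needed is $T_{\Phi^{(U,\Psi)}}T_{\Phi'}^{*}=\pi_{\range U}$, so again $\mathbf{G}^{\dagger}\mathbf{G}=T_{\widetilde{\Psi}}^{*}\pi_{\range{U^{*}}}T_{\Psi}$, and self-adjointness of $\mathbf{G}^{\dagger}\mathbf{G}$ (a pseudo-inverse property) forces (iii).

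I expect the main obstacle to be precisely the cancellation step passing from self-adjointness of the sandwiched operator $T_{\Psi}^{*}S_{\Psi}^{-1}\pi_{\range{U^{*}}}T_{\Psi}$ to the operator identity $S_{\Psi}^{-1}\pi_{\range{U^{*}}}=\pi_{\range{U^{*}}}S_{\Psi}^{-1}$, which is what links the matrix-level self-adjointness to the geometric condition (iii); the clean way to handle it is to exploit the one-sided inverses $T_{\Psi}T_{\widetilde{\Psi}}^{*}=I_{\mathcal H}$ and $T_{\widetilde{\Psi}}T_{\Psi}^{*}=I_{\mathcal H}$. Part (2) then follows by symmetry: applying (1) to $\mathbf{G}_{U^{*},\Psi,\Phi}=\mathbf{G}_{U,\Phi,\Psi}^{*}$ with the roles of $\Phi,\Psi$ and of $U,U^{*}$ interchanged, and using $(\mathbf{G}^{\dagger})^{*}=(\mathbf{G}^{*})^{\dagger}$ together with $(U^{*})^{\dagger}=(U^{\dagger})^{*}$, converts the condition $\range{(U^{*})^{*}}=S_{\Phi}\range{(U^{*})^{*}}$ into $\range U=S_{\Phi}\range U$ and the formula into $\mathbf{G}_{U^{\dagger},\widetilde{U^{\dagger}U\Psi},\widetilde{\Phi}}$, as claimed.
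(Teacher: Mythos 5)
Your proof is correct, but it takes a genuinely different route from the paper's. The paper works with the characterization of the pseudo-inverse recalled in Section \ref{sec:notation} (prescribed kernel, prescribed range, and $\mathbf{G}X\mathbf{G}=\mathbf{G}$): it constructs the witness dual explicitly as $\Phi^{(U,\Psi)}=\{UT_{\Psi}\mathbf{G}^{\dagger}\delta_{i}\}_{i\in I}$, checks the kernel identity unconditionally, and lets the hypothesis $\range{U^*}=S_{\Psi}\range{U^*}$ enter only through range identities such as $\range{T_{\Psi}^{*}S_{\Psi}^{-1}U^{*}}=\range{T_{\Psi}^{*}U^{*}}$; part (2) is then handled by repeating analogous computations. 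You instead test the explicit candidate $\mathbf{H}=\mathbf{G}_{U^{\dagger},\widetilde{\Psi},\widetilde{UU^{\dagger}\Phi}}$ against the four Moore--Penrose equations, observe that three of them hold with no hypothesis at all, and isolate the invariance condition as exactly the self-adjointness of $\mathbf{H}\mathbf{G}=T_{\widetilde{\Psi}}^{*}\pi_{\range{U^{*}}}T_{\Psi}$, which your cancellation argument (using $T_{\widetilde{\Psi}}T_{\Psi}^{*}=T_{\Psi}T_{\widetilde{\Psi}}^{*}=I_{\mathcal{H}}$) converts into the commutation of $S_{\Psi}$ with $\pi_{\range{U^{*}}}$ --- a transparent geometric reformulation that the paper never makes explicit. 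Your derivation of part (2) by passing to adjoints (via $\mathbf{G}_{U,\Phi,\Psi}^{*}=\mathbf{G}_{U^{*},\Psi,\Phi}$, $(\mathbf{G}^{*})^{\dagger}=(\mathbf{G}^{\dagger})^{*}$ and $U^{*}(U^{*})^{\dagger}=U^{\dagger}U$) also avoids the duplicated computation in the paper. What the paper's route buys is the concrete formula for the witness dual in item (i), the same construction as in Theorem \ref{inverse Gram}; what your route buys is economy and a clean localization of where the hypothesis is actually needed. Two facts you invoke without proof --- the equivalence of the paper's kernel/range/product definition of the pseudo-inverse with the four Moore--Penrose equations, and the fact that $\mathbf{G}^{\dagger}\mathbf{G}$ is the orthogonal projection onto $\kernel{\mathbf{G}}^{\perp}$ --- are classical and correct, but since the paper only states the kernel/range/product definition, a one-line justification of each would make your argument fully self-contained.
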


\begin{proof} 
\begin{enumerate}
For the first part we have

$(1\Leftrightarrow 3)$
Putting $\mathbf{G}^{\dag}:=\left(\mathbf{G}_{U,\Phi,\Psi}\right)^{\dag}$. Then
\begin{eqnarray}\label{salib3}
\kernel{\mathbf{G}^{\dag}}=\left(\range{\mathbf{G}_{U,\Phi,\Psi}}\right)^{\perp}=\left(\range{T_{\Phi}^{*}UT_{\Psi}}\right)^{\perp}=
\range{T_{\Phi}^{*}U}^{\perp}=
\kernel{U^*T_{\Phi}},
\end{eqnarray}
\begin{eqnarray}\label{salib4}
\range{\mathbf{G}^{\dag}}=\left(\kernel{\mathbf{G}_{U,\Phi,\Psi}}\right)^{\perp}=\left(\kernel{T_{\Phi}^{*}UT_{\Psi}}\right)^{\perp}=
 \kernel{UT_{\Psi}}^{\perp}=
\range{T_{\Psi}^*U^*}.
\end{eqnarray}
Take,
\begin{eqnarray}\label{fisalib}
\Phisd=\{{\phi_i}^{(U,\Psi)}\}_{i\in
I}:=\{UT_{\Psi}\mathbf{G}^{\dag}\delta_{i}\}_{i\in I},
\end{eqnarray}
where $\{\delta_i\}_{i\in I}$ is the canonical orthonormal basis
of $\ell^2$. Then $\Phisd$ is a Bessel sequence and on $\range{U}$
we obtain
\begin{eqnarray*}
T_{\Phisd}T_{\Phi}^*&=&UT_{\Psi}\mathbf{G}^{\dag}T_{\Phi}^{*}\\
 &=&T_{\Phi^d}T_{\Phi}^*UT_{\Psi}\mathbf{G}^{\dag}T_{\Phi}^{*}U
 T_{\Psi}T_{\Psi^d}^*U^{\dag}\\
 &=&T_{\Phi^d}\mathbf{G}_{U,\Phi,\Psi}\mathbf{G}^{\dag}\mathbf{G}_{U,\Phi,\Psi}T_{\Psi^d}^*U^{\dag}\\
 &=&T_{\Phi^d}\mathbf{G}_{U,\Phi,\Psi}T_{\Psi^d}^*U^{\dag}\\
 &=& UU^{\dag}=I_{\range{U}},
\end{eqnarray*}
where  $\Phi^{d}$ and $\Psi^{d}$   are duals of $\Phi$ and $\Psi$, respectively.
So, $\Phisd$ is a dual of $\Phi$ on $\range{U}$, in particular a frame on $\range{U}$.
Also,
\begin{eqnarray*}
\mathbf{G}_{U,\Phi,\Psi}\mathbf{G}_{U^{\dag},\Psi^{d},\Phi^{d}}\mathbf{G}_{U,\Phi,\Psi}& =&
T_{\Phi}^{*}UT_{\Psi}T_{\Psi^d}^{*}U^{\dag}T_{\Phi^d}T_{\Phi}^{*}UT_{\Psi}\\
&=&T_{\Phi}^{*}UU^{\dag}UT_{\Psi}\\
&=&T_{\Phi}^{*}UT_{\Psi}=\mathbf{G}_{U,\Phi,\Psi}.
\end{eqnarray*}
Moreover,
$\kernel{U^{\dag}T_{\Phisd}}=\kernel{U^*T_{\Phi}}$. Indeed, the equations
(\ref{salib3}) and (\ref{fisalib}) yield
\begin{eqnarray*}
\kernel{U^*T_{\Phi}}=\kernel{\mathbf{G}^{\dag}}\subseteq
\kernel{T_{\Phisd}}\subseteq \kernel{U^\dag T_{\Phisd}}.
\end{eqnarray*}
  For the reverse inclusion, suppose that
$c=\{c_i\}_{i\in I}\in \kernel{U^{\dag}T_{\Phisd}}$ and so,
$U^{\dag}T_{\Phisd}c=0.$  The injectivity $U^{\dag}$  on $\range{U}$  and  $\range{T_{\Phisd}}\subseteq \range{U}$  imply that  $T_{\Phisd}c=0$. On the other hand, by the fact that  $
\mathbf{G^{\dag}}\mathbf{G}_{U,\Phi,\Psi}\mathbf{G^{\dag}}=\mathbf{G^{\dag}}$
 we have
 \begin{eqnarray*}
 \mathbf{G^{\dag}}c&=&\mathbf{G^{\dag}}\mathbf{G}_{U,\Phi,\Psi}\mathbf{G^{\dag}}c\\
 &=& \mathbf{G^{\dag}}T_{\Phi}^*UT_{\Psi} \mathbf{G^{\dag}}c\\
 &=& \mathbf{G^{\dag}}T_{\Phi}^*T_{\Phisd}c=0.
 \end{eqnarray*}
 Hence, $c\in \kernel{\mathbf{G^{\dag}}}=\kernel{U^*T_{\Phi}}$. Therefore,
\begin{eqnarray*}
\kernel{\mathbf{G}_{U^{\dag},\widetilde{\Psi},\Phisd}}&=&
\kernel{T_{\widetilde{\Psi}}^*U^{\dag}T_{\Phisd}}\\
&=&\kernel{U^{\dag}T_{\Phisd}}\\
&=&   \kernel{U^*T_{\Phi}}=\kernel{\mathbf{G}^{\dag}}.\\
\end{eqnarray*}
Combining (\ref{salib4}) and the assumptions we obtain
\begin{eqnarray*}
\range{\mathbf{G}_{U^{\dag},\widetilde{\Psi},\Phisd}}&=&\range{T_{\widetilde{\Psi}}^*U^{\dag}T_{\Phisd}}\\
&=&\range{T_{\widetilde{\Psi}}^*U^{\dag}}\\
&=&\range{T_{\widetilde{\Psi}}^*U^{*}}\\
  &=&\range{T_{\Psi}^*S_{\Psi}^{-1}U^*}\\
&=&\range{T_{\Psi}^*U^{*}}=\range{\mathbf{G}^{\dag}}.
\end{eqnarray*}
So,
$\mathbf{G}^{\dag}=\mathbf{G}_{U^{\dag},\widetilde{\Psi},\Phisd}.$
 Conversely, suppose there is a  dual of $\Phi$ as $\Phisd$
such that
$\mathbf{G}^{\dag}=\mathbf{G}_{U^{\dag},\widetilde{\Psi},\Phisd}.$
Then
\begin{eqnarray*}
\range{T_{\Psi}^*S_{\Psi}^{-1}U^*}&=&\range{T_{\Psi}^*S_{\Psi}^{-1}U^\dagger}\\
&=&\range{\mathbf{G}_{U^{\dag},\widetilde{\Psi},\Phisd}}\\
&=&\range{\mathbf{G}^{\dag}}\\
&=&\range{\mathbf{G}_{U,\Phi,\Psi}^*}\\
&=& \range{T_{\Psi}^*U^*T_{\Phi}}=\range{T_{\Psi}^*U^*}.
\end{eqnarray*}
This follows that $\range{S_{\Psi}^{-1}U^*}=\range{U^*}.$

($2\Leftrightarrow 3$) It is easy to see that $\widetilde{UU^{\dagger}\Phi}$ is a dual of $\Phi$ on $\range{U}$ and
\begin{eqnarray*} \mathbf{G}_{U,\Phi,\Psi}\mathbf{G}_{U^{\dagger},\widetilde{\Psi},\widetilde{UU^{\dagger}\Phi}}\mathbf{G}_{U,\Phi,\Psi}=\mathbf{G}_{U,\Phi,\Psi}.\end{eqnarray*}
Using this fact $\widetilde{UU^{\dagger}\Phi}$ is a frame on $\range{U}$ and $UU^{\dagger}=\pi_{\range{U}}$ (see Section \ref{sec:closedrange}) we obtain
\begin{eqnarray*}
\kernel{\mathbf{G}_{U^{\dagger},\widetilde{\Psi},\widetilde{UU^{\dagger}\Phi}}}&=&\kernel{T_{\widetilde{\Psi}}^*
U^{\dagger}T_{\widetilde{UU^{\dagger}\Phi}}}\\
&=&\kernel{
U^{\dagger}T_{\widetilde{UU^{\dagger}\Phi}}}\\
&=&\kernel{
T_{\widetilde{UU^{\dagger}\Phi}}}\\
&=&\range{T_{\widetilde{UU^{\dagger}\Phi}}^*}^{\perp}\\
&=&\range{T_{UU^{\dagger}\Phi}^*}^{\perp}\\
&=&\range{T_{\Phi}^*UU^{\dagger}}^{\perp}\\
&=&\range{T_{\Phi}^*U}^{\perp}=\kernel{\mathbf{G}^{\dagger}}.
\end{eqnarray*}
We can see that $\range{U^*}=S_{\Psi}\range{U^*}$ if and only if
\begin{eqnarray*}
\range{\mathbf{G}_{U^{\dagger},\widetilde{\Psi},\widetilde{UU^{\dagger}\Phi}}}&=&
\range{T_{\widetilde{\Psi}}^*
U^{\dagger}T_{\widetilde{UU^{\dagger}\Phi}}}\\
&=&\range{T_{\widetilde{\Psi}}^*
U^{\dagger}}\\
&=&T_{\widetilde{\Psi}}^*
U^{\dagger}(\Hil)\\
&=&T_{\widetilde{\Psi}}^*
U^{*}(\Hil)\\
&=&\range{T_{\Psi}^*
U^{*}}
=\range{\mathbf{G}^{\dagger}}.
\end{eqnarray*}
Hence, $(1)$ is proved.

For the second part note that

 ($1\Leftrightarrow 3$) is similar to the first part.

($2\Leftrightarrow 3$) One can see that $\widetilde{U^{\dagger}U\Psi}$ is a dual of $\Psi$ on $\range{U^{*}}$ and
 $$\mathbf{G}_{U,\Phi,\Psi}\mathbf{G}_{U^{\dagger},\widetilde{U^{\dagger}U\Psi},\widetilde{\Phi}}\mathbf{G}_{U,\Phi,\Psi}=\mathbf{G}_{U,\Phi,\Psi}.$$
Using this fact $\range{(U^{\dagger})^*}=\range{U}$, then $S_{\Phi}\range{U}=\range{U}$ if and only if
 \begin{eqnarray*}
 \kernel{\mathbf{G}_{U^{\dagger},\widetilde{U^{\dagger}U\Psi},\widetilde{\Phi}}}&=&
 \kernel{T_{\widetilde{U^{\dagger}U\Psi}}^*
U^{\dagger}T_{\widetilde{\Phi}}}\\
&=&\kernel{
U^{\dagger}T_{\widetilde{\Phi}}}\\
&=&\kernel{
U^{\dagger}S_{\Phi}^{-1}T_{\Phi}}\\
&=&\range{T_{\Phi}^*S_{\Phi}^{-1}(U^{\dagger})^*}^{\perp}\\
&=&\range{T_{\Phi}^*S_{\Phi}^{-1}U}\\
&=&\range{T_{\Phi}^*U}^{\perp}\\
&=&\kernel{U^*T_{\Phi}}=\kernel{\mathbf{G}^{\dagger}}.
 \end{eqnarray*}
Applying this fact $\widetilde{U^{\dagger}U\Psi}$ is a frame for   $\range{U^{\dagger}}$ we have
 \begin{eqnarray*}
 \range{\mathbf{G}_{U^{\dagger},\widetilde{U^{\dagger}U\Psi},\widetilde{\Phi}}}&=&
 \range{T_{\widetilde{U^{\dagger}U\Psi}}^*
U^{\dagger}T_{\widetilde{\Phi}}}\\
&=&
 \range{T_{\widetilde{U^{\dagger}U\Psi}}^*
U^{\dagger}}\\
&=&
 \range{T_{U^{\dagger}U\Psi}^*
U^{\dagger}}\\
&=&\range{T_{\Psi}^*U^{\dagger}UU^{\dagger}}\\
&=&\range{T_{\Psi}^*U^{\dagger}}\\
&=&\range{T_{\Psi}^*U^{*}}=\range{\mathbf{G}^{\dagger}}.
 \end{eqnarray*}
\end{enumerate}
\end{proof}

The assumptions $\range{U^*}=S_{\Psi}\range{U^*} = \range{S_{\Psi} U^*} $ naturally leads to the question for which operators $U$ this is fulfilled, leading to questions about invariant subspaces, see e.g. \cite{enflom01}, beyond the scope of this paper.

As we have mention before, for a closed range operator  $U$
the uniqueness property of Theorem \ref{inverse Gram} does not hold in general
as the next example indicates.

\begin{ex} Let $\mathcal{H}$ be a Hilbert space with an orthonormal basis $\{e_i\}_{i\in I}$. Let
 $\Psi=\{e_1,e_1,e_2,e_3,e_4,...\}$ and $\Phi=\{e_1,e_2,e_2,e_3,e_4,...\}$. It is clear to see that $\Phi^{a}=\{e_1,0,e_2,e_3,...\}$ and $\Phi^{b}=\{e_1,\frac{e_2}{2},\frac{e_2}{2},e_3,...\}$ are respective duals.
Define  $U\in \BL{\mathcal{H}}$ by
\begin{eqnarray*}
Ue_{i}=e_i, \quad(i\neq 2),\quad Ue_2=e_1.
\end{eqnarray*}
Obviously, $\range{U}=\{e_2\}^{\perp}$ and $\kernel{U}=\finspan{e_1-e_2}$. Hence $U$ has closed
range, so the operator $\mathbf{G}_{U,\Phi,\Psi}$ has also closed
range. Moreover, $U:\kernel{U}^\perp\rightarrow \range{U}$ is invertible, hence $U^\dag$ is given as
\begin{eqnarray*}
U^{\dag}e_{1}=\frac{e_1+e_2}{2},\quad U^{\dag}e_{2}=0,\quad \textrm{and}\quad U^{\dag}e_{i}=e_i, \quad(i\geq 3).
\end{eqnarray*}
In fact,  it is the unique right
inverse of $U$ on $\range{U}$ such that $\range{U^{\dag}}= \range{U^*}$ where $U^*$ is determined by 
\begin{eqnarray*}
U^*e_{1}=e_1+e_2,\quad U^*e_{2}=0,\quad \textrm{and}\quad U^*e_{i}=e_i, \quad(i\geq 3).
\end{eqnarray*}
Moreover,
\begin{eqnarray*}
T_{\Phi^{a}}\{c_k\}=c_1e_1+c_3e_2+c_4e_3+...,
\end{eqnarray*}
\begin{eqnarray*}
T_{\Phi^{b}}\{c_k\}=c_1e_1+c_2e_2/2+c_3e_2/2+c_4e_3+....
\end{eqnarray*}
Hence, $U^{\dag}T_{\Phi^{a}}=U^{\dag}T_{\Phi^{b}}$. So
\begin{eqnarray*}
T_{\Psi^{d}}^*U^{\dag}T_{\Phi^{a}}&=&T_{\Psi^{d}}^*U^{\dag}T_{\Phi^{b}},
\end{eqnarray*}
for every dual $\Psi^{d}$ of $\Psi$. \end{ex}

\begin{cor} Let $U\in \BL{\mathcal{H}}$ be an operator with closed range, and $\Psi$ and $\Phi$  frames in Hilbert space
$\mathcal{H}$.Then 
\begin{eqnarray*}
\left(\mathbf{G}_{U,UU^{\dagger}\Phi,\Psi}\right)^{\dagger}=\left(\mathbf{G}_{U,\Phi,U^{\dagger}U\Psi}\right)^{\dagger}=\mathbf{G}_{U^{\dag},\widetilde{U^{\dagger}U\Psi},\widetilde{UU^{\dagger}\Phi}}.
\end{eqnarray*}
\end{cor}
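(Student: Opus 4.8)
The plan is to reduce the three matrices to a single $U$-cross Gram matrix built from frames for $\range{U}$ and $\range{U^{*}}$, and then to verify the proposed formula directly through the Moore--Penrose equations.

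First I would note that all three matrices in the statement coincide. Since $UU^{\dagger}=\pi_{\range{U}}$ and $U^{\dagger}U=\pi_{\range{U^{*}}}$ are orthogonal (hence self-adjoint) projections and $UU^{\dagger}U=U$, the entries obey $\langle U\psi_{j},UU^{\dagger}\phi_{i}\rangle=\langle UU^{\dagger}U\psi_{j},\phi_{i}\rangle=\langle U\psi_{j},\phi_{i}\rangle$ and likewise $\langle UU^{\dagger}U\psi_{j},\phi_{i}\rangle=\langle U\psi_{j},\phi_{i}\rangle$. Hence
\[
\mathbf{G}_{U,UU^{\dagger}\Phi,\Psi}=\mathbf{G}_{U,\Phi,U^{\dagger}U\Psi}=\mathbf{G}_{U,UU^{\dagger}\Phi,U^{\dagger}U\Psi},
\]
so their pseudo-inverses agree. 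Writing $\Phi':=UU^{\dagger}\Phi$ and $\Psi':=U^{\dagger}U\Psi$ (so that $\widetilde{\Phi'}=\widetilde{UU^{\dagger}\Phi}$ and $\widetilde{\Psi'}=\widetilde{U^{\dagger}U\Psi}$), it remains to identify $G^{\dagger}$ for $G:=\mathbf{G}_{U,\Phi',\Psi'}=T_{\Phi'}^{*}UT_{\Psi'}$. Here $\Phi'$ is a frame for $\range{U}$ and $\Psi'$ a frame for $\range{U^{*}}$ (as recalled before Corollary \ref{cor}), so $G$ has closed range by the Corollary on frames for $\range{U}$ and $\range{U^{*}}$ stated above, and $G^{\dagger}$ exists.

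Next I would take as candidate $H:=\mathbf{G}_{U^{\dagger},\widetilde{\Psi'},\widetilde{\Phi'}}=T_{\widetilde{\Psi'}}^{*}U^{\dagger}T_{\widetilde{\Phi'}}$ and check the four defining relations $GHG=G$, $HGH=H$, $(GH)^{*}=GH$, $(HG)^{*}=HG$. The tools are the reconstruction identities $T_{\widetilde{\Phi'}}T_{\Phi'}^{*}=I_{\range{U}}$ and $T_{\Psi'}T_{\widetilde{\Psi'}}^{*}=I_{\range{U^{*}}}$ for the canonical duals on the respective subspaces, together with $UU^{\dagger}U=U$ and $U^{\dagger}UU^{\dagger}=U^{\dagger}$. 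Inserting $T_{\Psi'}T_{\widetilde{\Psi'}}^{*}=I_{\range{U^{*}}}$ and using that $U^{\dagger}T_{\widetilde{\Phi'}}$ already lands in $\range{U^{*}}$ while $UU^{\dagger}=\pi_{\range{U}}$ is the identity on $\range{U}$, one computes $GH=T_{\Phi'}^{*}UU^{\dagger}T_{\widetilde{\Phi'}}=T_{\Phi'}^{*}T_{\widetilde{\Phi'}}=\mathbf{G}_{\Phi',\widetilde{\Phi'}}$, and symmetrically $HG=\mathbf{G}_{\widetilde{\Psi'},\Psi'}$; the equalities $GHG=G$ and $HGH=H$ then follow from one more application of the reconstruction identities.

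The crux---and the only place where the \emph{canonical} dual (rather than an arbitrary one) is essential---is the self-adjointness of $GH$ and $HG$. Because $\widetilde{\phi'}_{i}=S_{\Phi'}^{-1}\phi'_{i}$ with $S_{\Phi'}^{-1}$ self-adjoint on $\range{U}$, one has $\langle\widetilde{\phi'}_{j},\phi'_{i}\rangle=\langle S_{\Phi'}^{-1}\phi'_{j},\phi'_{i}\rangle=\langle\phi'_{j},S_{\Phi'}^{-1}\phi'_{i}\rangle=\langle\phi'_{j},\widetilde{\phi'}_{i}\rangle$, which via the adjoint formula $(\mathbf{G}_{\Phi',\widetilde{\Phi'}})^{*}=\mathbf{G}_{\widetilde{\Phi'},\Phi'}$ gives $(GH)^{*}=GH$; the identical computation yields $(HG)^{*}=HG$. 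Uniqueness of the Moore--Penrose inverse then forces $H=G^{\dagger}$, which is the asserted identity. The one thing to watch throughout is that the reconstruction operators are identities only on $\range{U}$ and $\range{U^{*}}$ and not on all of $\mathcal{H}$; this causes no trouble since $UT_{\Psi'}$ maps into $\range{U}$ and $U^{\dagger}T_{\widetilde{\Phi'}}$ into $\range{U^{*}}$, so every restriction lands in the correct subspace.
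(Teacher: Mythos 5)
Your proof is correct, but it takes a genuinely different route from the paper's. The paper's own proof is a two-line reduction to its Theorem \ref{pseudo Gram}: by Lemma \ref{lem:closedrang1} the matrix has closed range; since $UU^{\dagger}\Phi$ and $U^{\dagger}U\Psi$ are frames for $\range{U}$ and $\range{U^{*}}$, the invariance conditions $S_{UU^{\dagger}\Phi}\range{U}=\range{U}$ and $S_{U^{\dagger}U\Psi}\range{U^{*}}=\range{U^{*}}$ (condition (3) in each part of that theorem) hold automatically, and the formula follows by applying that theorem with $\Phi$, $\Psi$ replaced by the projected sequences. You bypass Theorem \ref{pseudo Gram} altogether: you first observe that all three matrices coincide entrywise (each equals $\mathbf{G}_{U,\Phi,\Psi}$, because $UU^{\dagger}$ is a self-adjoint projection and $UU^{\dagger}U=U$) --- a simplification the paper leaves implicit --- and then verify the four Moore--Penrose identities for the candidate matrix directly, using $T_{\widetilde{UU^{\dagger}\Phi}}T_{UU^{\dagger}\Phi}^{*}=\pi_{\range{U}}$, $T_{U^{\dagger}U\Psi}T_{\widetilde{U^{\dagger}U\Psi}}^{*}=\pi_{\range{U^{*}}}$, and the self-adjointness of the inverse frame operators on the respective subspaces (which is exactly where the canonical duals are needed). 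What each approach buys: the paper's argument is much shorter given its machinery and embeds the corollary in a sharper statement, since the invariance conditions in Theorem \ref{pseudo Gram} are necessary as well as sufficient; your argument is self-contained, avoids the slight mismatch that Theorem \ref{pseudo Gram} is formulated for frames of all of $\mathcal{H}$ whereas here the projected sequences are frames only for subspaces, and it renders the closed-range citation superfluous, because the existence of a bounded operator $H$ satisfying the four Penrose relations already forces the range of $\mathbf{G}$ to be closed (it coincides with the range of the orthogonal projection $\mathbf{G}H$). One cosmetic caveat: the paper defines the pseudo-inverse through the kernel/range conditions together with $\mathbf{G}\mathbf{G}^{\dagger}\mathbf{G}=\mathbf{G}$, not through the four Penrose identities; these characterizations agree for bounded closed-range operators, so your uniqueness step is valid, but a one-line remark making that translation explicit would be appropriate.
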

\begin{proof} By Lemma \ref{lem:closedrang1} $\mathbf{G}_{U,\Phi,\Psi}$ has closed range. 
It is easy to see that $UU^{\dagger}\Phi$ and $U^{\dagger}U\Psi$ are frames  for $\range U$  and $\range{U^*}$, respectively. So, $S_{UU^{\dagger}\Phi}\range U=\range U$ and $S_{U^{\dagger}U\Psi}\range{U^*}
=\range{U^*}$. Then the result immediately follows by Theorem \ref{pseudo Gram}.
\end{proof}

\subsection{More on the closed range conditions} \label{sec:closedrange}

In this subsection we present some conditions for a $U$-cross Gram matrix
having closed range. For example, if $U$ is a positive invertible and
$\Phi$ is a frame, then $\sqrt{U}\Phi$ is a frame, and therefore
$T_{\sqrt{U}\Phi}$ has closed range. Using Corollary 2.3 of
\cite{olepinv} it follows that
$$\mathbf{G}_{U,\Phi,\Phi} = T_\Phi^* U T_\Phi = T_\Phi^* \sqrt{U} \sqrt{U} T_\Phi =
\left(T_{\sqrt{U}\Phi}\right)^*\left(T_{\sqrt{U}\Phi}\right)$$
has closed range.

\begin{prop} Let $U\in \BL{\mathcal{H}}$ have closed range, $\Phi$ be a Bessel sequence  and $\Psi$ a frame for a Hilbert space $\mathcal{H}$.
Then the following are equivalent:
\begin{enumerate}
\item[(1)] \label{r1}  $\mathbf{G}_{U,\Phi,\Psi}$ has closed range.
\item[(2)] \label{r2}$U^*{\Phi}$ is a frame sequence.
\end{enumerate}
\end{prop}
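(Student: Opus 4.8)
The plan is to reduce the closed-range question for the matrix $\mathbf{G}_{U,\Phi,\Psi}$ to a closed-range question for a synthesis operator, exploiting the factorization $\mathbf{G}_{U,\Phi,\Psi}=T_{\Phi}^{*}UT_{\Psi}$ established in the opening lemma. Since $\Psi$ is a frame, $T_{\Psi}$ is onto and $T_{\Psi}^{*}$ is bounded below, so $T_{\Psi}$ contributes nothing to the range structure beyond a bounded invertible change on $(\kernel{T_{\Psi}})^{\perp}$; concretely I would use that $\range{\mathbf{G}_{U,\Phi,\Psi}}=\range{T_{\Phi}^{*}UT_{\Psi}}=\range{T_{\Phi}^{*}U}$, because $T_{\Psi}$ is surjective. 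Thus closedness of the range of $\mathbf{G}_{U,\Phi,\Psi}$ is equivalent to closedness of $\range{T_{\Phi}^{*}U}=\range{T_{U^{*}\Phi}^{*}}$, using the identity $T_{\Phi}^{*}U=T_{U^{*}\Phi}^{*}$ already employed in the proof of the left/right inverse proposition.

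Next I would connect closedness of $\range{T_{U^{*}\Phi}^{*}}$ to the sequence $U^{*}\Phi$ being a frame sequence. The key standard fact is that a synthesis operator $T_{\Theta}$ (equivalently its adjoint $T_{\Theta}^{*}$) has closed range if and only if $\Theta$ is a frame sequence, i.e. a frame for $\clspan{\Theta}$; this is a direct consequence of Proposition \ref{Eq.cond.Riesz}-style characterizations together with the fact that a closed-range analysis operator yields lower and upper bounds on the appropriate subspace. Here $\Theta=U^{*}\Phi$, which is automatically Bessel since $\Phi$ is Bessel and $U$ (hence $U^{*}$) is bounded, so the frame-sequence property is exactly the closed-range property. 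Applying the fact that $U$ has closed range (so $U^{*}$ does too) guarantees that composing with $U$ does not destroy closedness in the relevant direction, and this is where the hypothesis on $U$ enters.

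The main obstacle I anticipate is the careful bookkeeping of ranges under composition: closed range is not preserved under arbitrary composition, so I must be sure that the reduction $\range{\mathbf{G}_{U,\Phi,\Psi}}=\range{T_{U^{*}\Phi}^{*}}$ is genuinely an equality of \emph{closed-or-not} ranges, not merely an inclusion up to closure. The surjectivity of $T_{\Psi}$ handles one side cleanly, but I would verify the reverse implication by showing that if $U^{*}\Phi$ is a frame sequence then $T_{\Phi}^{*}U$ is bounded below on $(\kernel{T_{\Phi}^{*}U})^{\perp}$, and that precomposing with the surjection $T_{\Psi}$ preserves this lower bound on the corresponding subspace. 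Once both directions of the lower-bound estimate are in place, the equivalence of (1) and (2) follows.
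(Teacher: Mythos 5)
Your proposal is correct and takes essentially the same route as the paper: factor $\mathbf{G}_{U,\Phi,\Psi}=T_{U^*\Phi}^*T_{\Psi}$, use the surjectivity of $T_{\Psi}$ (as $\Psi$ is a frame) to obtain the exact set equality $\range{\mathbf{G}_{U,\Phi,\Psi}}=\range{T_{U^*\Phi}^*}$, and invoke the standard fact that a Bessel sequence is a frame sequence if and only if its synthesis (equivalently, analysis) operator has closed range. One caveat: the closed-range hypothesis on $U$ is in fact never used---both the range equality and the frame-sequence characterization hold for arbitrary bounded $U$---so your claim that this is where that hypothesis enters is a red herring, and the lower-bound verification you anticipate is likewise unnecessary, since the two ranges are equal as sets and hence simultaneously closed or not.
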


\begin{proof} Since $\Psi$ is a frame for $\mathcal{H}$ we obtain
\begin{equation}\label{Gram cr}
 \range{\mathbf{G}_{U,\Phi,\Psi}}= \range{T_{U^*\Phi}^*T_{\Psi}} =
\range{T_{U^*\Phi}^*}. \end{equation}
The synthesis operator of $U^*{\Phi}$ has closed range \cite{xxlstoeant11,ole1n} 
if and only if $U^*{\Phi}$ is a frame sequence.
\end{proof}
\begin{cor}

Let $U$ be a surjective operator in $\BL{\mathcal{H}}$, $\Phi$  a
Bessel sequence and $\Psi$ a frame for a Hilbert space $\mathcal{H}$. Then the
following are equivalent:
\begin{enumerate}
\item[(1)]   $\mathbf{G}_{U,\Phi,\Psi}$ has closed range. \item[(2)]  ${\Phi}$
is a frame sequence.
\end{enumerate}
\end{cor}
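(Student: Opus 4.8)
The plan is to deduce the statement directly from the preceding Proposition, together with the elementary fact that a surjective operator automatically has closed range. First I would note that if $U\in\BL{\Hil}$ is surjective then $\range{U}=\Hil$ is closed, so $U$ meets the hypothesis of the Proposition. Hence $\mathbf{G}_{U,\Phi,\Psi}$ has closed range if and only if $U^{*}\Phi$ is a frame sequence, and the whole task reduces to showing that $U^{*}\Phi$ is a frame sequence if and only if $\Phi$ is. The structural input making this work is that surjectivity of $U$ is equivalent to $U^{*}$ being bounded below: since $\range{U}=\Hil$ we get $\kernel{U^{*}}=\range{U}^{\perp}=\{0\}$ while $\range{U^{*}}=\kernel{U}^{\perp}$ is closed, so $U^{*}$ is injective with closed range, and therefore there is a constant $c>0$ with $\|U^{*}f\|\geq c\,\|f\|$ for all $f\in\Hil$.

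Next I would pass to synthesis operators. As $U^{*}$ is bounded and $\Phi$ is Bessel, $U^{*}\Phi$ is again Bessel, and straight from the definition $T_{U^{*}\Phi}c=\sum_{i}c_{i}U^{*}\phi_{i}=U^{*}T_{\Phi}c$, so that $T_{U^{*}\Phi}=U^{*}T_{\Phi}$ and $\range{T_{U^{*}\Phi}}=U^{*}\!\left(\range{T_{\Phi}}\right)$. Using the same criterion invoked in the Proposition, namely that a Bessel sequence is a frame sequence exactly when its synthesis operator has closed range \cite{xxlstoeant11,ole1n}, it now suffices to prove that $U^{*}\!\left(\range{T_{\Phi}}\right)$ is closed if and only if $\range{T_{\Phi}}$ is. This is where the bounded-below property enters: $U^{*}$ is a topological isomorphism of $\Hil$ onto its closed range $\range{U^{*}}$, with bounded inverse there, hence a homeomorphism onto $\range{U^{*}}$ for the subspace topology. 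A homeomorphism preserves closedness in both directions, and since $\range{U^{*}}$ is itself closed in $\Hil$, closedness relative to $\range{U^{*}}$ coincides with closedness in $\Hil$. Thus $U^{*}\!\left(\range{T_{\Phi}}\right)$ is closed in $\Hil$ precisely when $\range{T_{\Phi}}$ is, completing the chain of equivalences.

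The single delicate point is this last equivalence: one must not quietly replace the (generally non-closed) subspace $\range{T_{\Phi}}$ by its closure, but rather argue that $\range{T_{\Phi}}$ is closed if and only if its image $U^{*}\!\left(\range{T_{\Phi}}\right)$ is, with neither assumed closed a priori. The estimate $\|U^{*}f\|\geq c\,\|f\|$ is exactly what makes $(U^{*})^{-1}$ continuous on $\range{U^{*}}$ and so forces this equivalence. I expect this to be the only real obstacle, and it is precisely where surjectivity (equivalently, injectivity of $U^{*}$) is needed: for a merely closed-range $U$ the operator $U^{*}$ would carry the nontrivial kernel $\range{U}^{\perp}$ and would only be bounded below on $\kernel{U^{*}}^{\perp}$, so it need not preserve closedness of an arbitrary subspace, and the clean reduction from $U^{*}\Phi$ to $\Phi$ would fail.
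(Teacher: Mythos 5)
Your proposal is correct and takes essentially the approach the paper intends: the corollary is stated there without proof, as an immediate consequence of the preceding proposition, and you supply precisely the missing link — for surjective $U$ the adjoint $U^*$ is bounded below, so via $T_{U^*\Phi}=U^*T_{\Phi}$ the sequence $U^*\Phi$ is a frame sequence if and only if $\Phi$ is. Your two-directional closedness argument (bounded-below operators preserve closedness of subspaces forward, and injectivity plus continuity gives the converse) is sound and correctly identifies the only delicate point.
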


\begin{lem}\label{lem:closedrang1} Let $U\in \BL{\Hil}$ have closed range, and $\Phi$ and $\Psi$ be Bessel sequences in a Hilbert space $\Hil$. The following assertions hold.
\begin{enumerate}
\item[(1)]  If $U \Psi$
 and $U U^\dag \Phi$
 are frames for $\range{U}$, then
$$ \range{\mathbf{G}_{U,\Phi,\Psi}}\ = \range{T_{U U^\dag \Phi}^*}.$$
\item[(2)]  If $U^\dag U \Psi$ 
and $U^* \Phi$ 
are frames for $\range{U^*}$, then
$$ \range{\mathbf{G}_{U,\Phi,\Psi}}\ = \range{T_{U^* \Phi}^*}.$$
\end{enumerate}
In particular, in both cases $\mathbf{G}_{U,\Phi,\Psi}$ has
closed range.

\end{lem}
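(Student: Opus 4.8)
The plan is to reduce both identities to a single general principle: if a bounded operator factors as $A = BC$ with $\range{C}$ equal to a closed subspace $V$ and $B$ annihilating $V^{\perp}$, then $\range{A} = \range{B}$. The key preparatory step is to absorb the two standard pseudo-inverse projections $UU^{\dag} = \pi_{\range U}$ and $U^{\dag}U = \pi_{\range{U^*}}$ (the orthogonal projections onto $\range U$ and $\range{U^*}$) into the factorization $\mathbf{G}_{U,\Phi,\Psi} = T_{\Phi}^{*}UT_{\Psi}$, together with the elementary identities $UT_{\Psi} = T_{U\Psi}$, $T_{\Phi}^{*}U = T_{U^*\Phi}^{*}$, and $UU^{\dag}T_{\Phi} = T_{UU^{\dag}\Phi}$ (and likewise $U^{\dag}UT_{\Psi} = T_{U^{\dag}U\Psi}$).

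For part (1), I start from $\mathbf{G}_{U,\Phi,\Psi} = T_{\Phi}^{*}T_{U\Psi}$ and observe that $U\Psi \subseteq \range U$ forces $\range{T_{U\Psi}} \subseteq \range U$, so inserting $UU^{\dag} = \pi_{\range U}$ changes nothing on the right and, using self-adjointness of $UU^{\dag}$, yields $\mathbf{G}_{U,\Phi,\Psi} = T_{\Phi}^{*}UU^{\dag}T_{U\Psi} = T_{UU^{\dag}\Phi}^{*}T_{U\Psi}$. Since $U\Psi$ is a frame for $\range U$, the synthesis operator $T_{U\Psi}$ maps $\ell^{2}$ onto $\range U$; since $UU^{\dag}\Phi \subseteq \range U$, the analysis operator $T_{UU^{\dag}\Phi}^{*}$ vanishes on $\range U^{\perp}$, so $\range{T_{UU^{\dag}\Phi}^{*}} = T_{UU^{\dag}\Phi}^{*}(\range U)$. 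Combining these, $\range{\mathbf{G}_{U,\Phi,\Psi}} = T_{UU^{\dag}\Phi}^{*}(\range{T_{U\Psi}}) = T_{UU^{\dag}\Phi}^{*}(\range U) = \range{T_{UU^{\dag}\Phi}^{*}}$.

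Part (2) is dual. Here I use $\mathbf{G}_{U,\Phi,\Psi} = T_{U^*\Phi}^{*}T_{\Psi}$ and absorb $U^{\dag}U = \pi_{\range{U^*}}$ on the right: because $U^*\Phi \subseteq \range{U^*}$, the analysis operator $T_{U^*\Phi}^{*}$ annihilates $\range{U^*}^{\perp} = \kernel U$, hence $T_{U^*\Phi}^{*} = T_{U^*\Phi}^{*}U^{\dag}U$ and $\mathbf{G}_{U,\Phi,\Psi} = T_{U^*\Phi}^{*}T_{U^{\dag}U\Psi}$. As $U^{\dag}U\Psi$ is a frame for $\range{U^*}$, the operator $T_{U^{\dag}U\Psi}$ is onto $\range{U^*}$, and the same range computation gives $\range{\mathbf{G}_{U,\Phi,\Psi}} = T_{U^*\Phi}^{*}(\range{U^*}) = \range{T_{U^*\Phi}^{*}}$. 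The final ``in particular'' claim is then immediate in both cases: the right-hand side is the range of the analysis operator of a frame sequence ($UU^{\dag}\Phi$ for $\range U$, respectively $U^*\Phi$ for $\range{U^*}$), which is closed by the standard fact that a frame sequence has analysis operator with closed range.

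\textbf{Main obstacle.} The only genuinely delicate points are the two range equalities $\range{T_{UU^{\dag}\Phi}^{*}} = T_{UU^{\dag}\Phi}^{*}(\range U)$ and $T_{U^*\Phi}^{*}(\range{U^*}) = \range{T_{U^*\Phi}^{*}}$, i.e.\ that restricting each analysis operator to the relevant subspace does not shrink its range. These rest on the containment of the frame vectors in that subspace combined with self-adjointness of the projections, and I would verify them by decomposing an arbitrary $f \in \mathcal{H}$ along $\range U \oplus \range U^{\perp}$ (respectively $\range{U^*} \oplus \kernel U$) and noting that the component in the orthogonal complement contributes nothing to the defining inner products.
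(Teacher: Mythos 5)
Your proposal is correct and follows essentially the same route as the paper: both rewrite $\mathbf{G}_{U,\Phi,\Psi}$ as $T_{UU^{\dag}\Phi}^{*}T_{U\Psi}$ (respectively $T_{U^{*}\Phi}^{*}T_{U^{\dag}U\Psi}$) using the projections $UU^{\dag}=\pi_{\range{U}}$ and $U^{\dag}U=\pi_{\range{U^{*}}}$, and then read off the range from the fact that the synthesis operator of a frame for $\range{U}$ (resp. $\range{U^{*}}$) is onto that subspace while the analysis operator only sees components in it. Your write-up merely makes explicit two details the paper leaves implicit, namely the justification of the restricted-range equalities and the closedness of the range of a frame sequence's analysis operator.
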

\begin{proof} We have that $U U^\dag$ is the orthogonal projection on $\range{U}$ \cite{olepinv}. Then
$$ \mathbf{G}_{U,\Phi,\Psi} =  T_\Phi^* U T_\Psi = T_{U U^\dag \Phi}^* U T_\Psi = T_{U U^\dag \Phi}^* T_{U \Psi}.$$
By assumption the considered sequences are frames for $\range{U}$,
and so $\range{\mathbf{G}_{U,\Phi,\Psi}} = \range{T_{U U^\dag
\Phi}^*}.$ This proves the first part. In order to obtain
(2) we have
\begin{eqnarray*}
\mathbf{G}_{U,\Phi,\Psi} =  T_\Phi^* U T_\Psi = T_{\Phi}^* U T_{U^\dag U\Psi} = T_{U^* \Phi}^* T_{U^\dag U\Psi}.
\end{eqnarray*}
\end{proof}
The assumption of Lemma 4.8 are fulfilled, if $\Psi$ and $\Phi$ are frames for $\mathcal{H}$.

\begin{thm}
Let $U\in \BL{\Hil}$ have closed range, $\Phi$ and $\Psi$ be
frames for $\Hil$. Then
$$\left(\mathbf{G}_{U,\Phi,\Psi}\right)^{\dagger}=T_{\widetilde{U\Psi}}^*T_{\widetilde{\Phi}}$$
if and only if $\range{T_{\Phi}^*}=\range{T_{\Phi}^*U}$.
\end{thm}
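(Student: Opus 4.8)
The plan is to verify directly that $\mathbf{M}:=T_{\widetilde{U\Psi}}^{*}T_{\widetilde{\Phi}}$ meets the three defining properties of the pseudo-inverse of $\mathbf{G}:=\mathbf{G}_{U,\Phi,\Psi}$ — namely $\mathbf{G}\mathbf{M}\mathbf{G}=\mathbf{G}$, $\range{\mathbf{M}}=\kernel{\mathbf{G}}^{\perp}$, and $\kernel{\mathbf{M}}=\range{\mathbf{G}}^{\perp}$ — since these characterize $\mathbf{G}^{\dagger}$ uniquely, and to track which of them forces the range condition. First I would rewrite $\mathbf{G}=T_{\Phi}^{*}UT_{\Psi}=T_{\Phi}^{*}T_{U\Psi}$ and note that, by Corollary \ref{cor}, $U\Psi$ is a frame for $\range{U}$ with canonical dual $\widetilde{U\Psi}$, so that $\mathbf{M}=\mathbf{G}_{I_{\mathcal H},\widetilde{U\Psi},\widetilde{\Phi}}$ is again a cross Gram matrix.

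The two ``structural'' properties should hold with no extra hypothesis. For the sandwich identity I would use the reconstruction formulas $T_{\widetilde{\Phi}}T_{\Phi}^{*}=I_{\mathcal{H}}$ (as $\Phi,\widetilde{\Phi}$ are dual frames) and $T_{U\Psi}T_{\widetilde{U\Psi}}^{*}=\pi_{\range{U}}=UU^{\dagger}$ (frame reconstruction on $\range{U}$), together with $\pi_{\range{U}}T_{U\Psi}=T_{U\Psi}$, to collapse $\mathbf{G}\mathbf{M}\mathbf{G}$ to $T_{\Phi}^{*}T_{U\Psi}=\mathbf{G}$. For the range I would use that $\widetilde{\Phi}$ is a frame, so $T_{\widetilde{\Phi}}$ is onto and $\range{\mathbf{M}}=\range{T_{\widetilde{U\Psi}}^{*}}=\range{T_{U\Psi}^{*}}$ (the original and canonical dual frames of $\range{U}$ differ by the invertible $S_{U\Psi}$, hence share this range); and since $T_{\Phi}^{*}$ is injective one gets $\kernel{\mathbf{G}}=\kernel{T_{U\Psi}}$, whose orthogonal complement is $\range{T_{U\Psi}^{*}}$ because the frame synthesis operator $T_{U\Psi}$ has closed range. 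Thus $\range{\mathbf{M}}=\kernel{\mathbf{G}}^{\perp}$ always.

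Consequently $\mathbf{M}=\mathbf{G}^{\dagger}$ reduces to the single remaining condition $\kernel{\mathbf{M}}=\range{\mathbf{G}}^{\perp}$, which is the heart of the equivalence. Here I would compute, using $T_{\widetilde{\Phi}}=S_{\Phi}^{-1}T_{\Phi}$ and $\kernel{T_{\widetilde{U\Psi}}^{*}}=\range{U}^{\perp}=\kernel{U^{*}}$, that $\kernel{\mathbf{M}}=\{c:S_{\Phi}^{-1}T_{\Phi}c\in\kernel{U^{*}}\}$, whereas $\range{\mathbf{G}}^{\perp}=\kernel{U^{*}T_{\Phi}}=\{c:T_{\Phi}c\in\kernel{U^{*}}\}$. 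Passing to orthogonal complements and exploiting that $\Phi$ is a frame — so $T_{\Phi}$ is onto, $T_{\Phi}^{*}$ is injective and bounded below, and $\range{\mathbf{G}}=\range{T_{\Phi}^{*}U}$ is closed — is what should turn the equality of these two kernels into the asserted equality of ranges $\range{T_{\Phi}^{*}}=\range{T_{\Phi}^{*}U}$.

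The main obstacle is exactly this last translation. The kernel identity literally says that $T_{\Phi}c$ and $S_{\Phi}^{-1}T_{\Phi}c$ meet $\kernel{U^{*}}$ on the same coefficient vectors, i.e.\ that $S_{\Phi}$ leaves $\range{U}$ (equivalently $\kernel{U^{*}}$) invariant; so the delicate point is to confirm that this invariance is faithfully encoded by the range equality in the statement rather than by a strictly weaker or stronger condition. I would therefore devote the bulk of the argument to the cancellation of $T_{\Phi}^{*}$ — justified by its being injective and bounded below, which keeps the images $T_{\Phi}^{*}(\range{U})$ and $T_{\Phi}^{*}(\mathcal{H})$ closed — and to checking that the equality of kernels holds \emph{precisely} when $\range{T_{\Phi}^{*}}=\range{T_{\Phi}^{*}U}$, paying close attention to the role of the frame operator $S_{\Phi}$ at this step.
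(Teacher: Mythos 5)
Your structural steps are correct and essentially identical to the paper's own argument: the identities $\mathbf{G}\mathbf{M}\mathbf{G}=\mathbf{G}$ and $\range{\mathbf{M}}=\kernel{\mathbf{G}}^{\perp}$ hold with no hypothesis beyond closedness of $\range{U}$, so the theorem reduces to deciding when $\kernel{\mathbf{M}}=\range{\mathbf{G}}^{\perp}$. Your computation of the two kernels is also right, and so is your diagnosis that their equality says exactly that $S_{\Phi}$ maps $\kernel{U^{*}}$ onto itself (equivalently, since $S_{\Phi}$ is invertible and self-adjoint, that $S_{\Phi}\range{U}=\range{U}$).

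The gap is the step you postpone to ``the bulk of the argument'': converting that invariance into $\range{T_{\Phi}^{*}}=\range{T_{\Phi}^{*}U}$. That step would fail, because the two conditions are not equivalent; your worry that the kernel identity might encode ``a strictly weaker condition'' is exactly what happens. Since $\Phi$ is a frame for $\Hil$, $T_{\Phi}^{*}$ is injective, so $\range{T_{\Phi}^{*}}=\range{T_{\Phi}^{*}U}=T_{\Phi}^{*}\left(\range{U}\right)$ forces $\range{U}=\Hil$: the stated range condition is nothing but surjectivity of $U$. Surjectivity gives $\kernel{U^{*}}=\{0\}$ and hence the invariance trivially --- this is the ``if'' direction, which your outline (and the paper) proves correctly. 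The converse is false: take $\Phi=\Psi$ an orthonormal basis and $U=\pi_{V}$ an orthogonal projection onto a closed subspace with $\{0\}\neq V\neq\Hil$. Then $S_{\Phi}=I_{\Hil}$, $S_{U\Psi}=I_{V}$, so $\widetilde{U\Psi}=U\Psi$ and $\mathbf{M}=T_{\Phi}^{*}UT_{\Phi}=\mathbf{G}^{\dagger}$ (here $\mathbf{G}=T_{\Phi}^{*}UT_{\Psi}$ and $U^{\dagger}=U$), while $\range{T_{\Phi}^{*}U}=T_{\Phi}^{*}\left(V\right)\neq\range{T_{\Phi}^{*}}$; the pseudo-inverse formula holds yet the range condition fails. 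So the correct characterization is the invariance condition $S_{\Phi}\range{U}=\range{U}$ you derived (the condition appearing in Theorem \ref{pseudo Gram}), and the ``only if'' direction of the statement cannot be proved. You should know that the paper's own proof shares this defect: its chain of kernel equalities establishes only the ``if'' direction, because its first link, $\kernel{T_{\widetilde{U\Psi}}^{*}T_{\widetilde{\Phi}}}=\kernel{T_{\widetilde{\Phi}}}$, already presupposes $\range{U}^{\perp}=\{0\}$ (as $T_{\widetilde{\Phi}}$ is onto), and no argument for the reverse implication is given. Your proposal is therefore incomplete at precisely the point where the paper's proof is unsound; completing it honestly leads to the weaker invariance condition, not to the stated range equality.
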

\begin{proof}
One can see that
\begin{eqnarray*}
\mathbf{G}_{U,\Phi,\Psi}T_{\widetilde{U\Psi}}^*T_{\widetilde{\Phi}}
\mathbf{G}_{U,\Phi,\Psi}&=&T_{\Phi}^*UT_{\Psi}T_{\widetilde{U\Psi}}^*T_{\widetilde{\Phi}}
T_{\Phi}^*UT_{\Psi} \\
&=&T_{\Phi}^*T_{U\Psi}T_{\widetilde{U\Psi}}^*T_{\widetilde{\Phi}}
T_{\Phi}^*UT_{\Psi}\\
&=&T_{\Phi}^*UT_{\Psi}=\mathbf{G}_{U,\Phi,\Psi}.
\end{eqnarray*}
Also,
\begin{eqnarray*}
\range{T_{\widetilde{U\Psi}}^*T_{\widetilde{\Phi}}}&=&
\range{T_{\widetilde{U\Psi}}^*} \\
&=&\range{T_{U\Psi}^*}\\
&=&\range{T_{\Psi}^*U^*}\\
&=&\range{\mathbf{G}_{U,\Phi,\Psi}^*}=\range{\left(\mathbf{G}_{U,\Phi,\Psi}\right)^{\dagger}}.
\end{eqnarray*}
Now,  $\range{T_{\Phi}^*}=\range{T_{\Phi}^*U}$ if and only if
\begin{eqnarray*}
\kernel{T_{\widetilde{U\Psi}}^*T_{\widetilde{\Phi}}}&=&\kernel{T_{\widetilde{\Phi}}}\\
&=&\kernel{T_{\Phi}}\\
&=&\range{T_{\Phi}^*}^{\perp}\\
&=&\range{T_{\Phi}^*U}^{\perp}\\
&=&\range{\mathbf{G}_{U,\Phi,\Psi}}^{\perp}=\kernel{\left(\mathbf{G}_{U,\Phi,\Psi}\right)^{\dagger}}.
\end{eqnarray*}
\end{proof}
\begin{cor}{\label{pseudo3}}
Let $U$ have closed range, and $\Phi$ and $\Psi$ be frames for $\range{U}$ and $\Hil$, respectively. Then
$$\left(\mathbf{G}_{U,\Phi,\Psi}\right)^{\dagger}=T_{\widetilde{U\Psi}}^*T_{\widetilde{\Phi}}.$$
\end{cor}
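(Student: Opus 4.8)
The plan is to reduce the corollary to the previous theorem by passing to the ambient Hilbert space $V := \range{U}$. First I would observe that, since $\Psi$ is a frame for $\Hil$ and $U$ has closed range, Corollary \ref{cor} guarantees that $U\Psi = \{U\psi_i\}_{i\in I}$ is a frame for $\range{U}$. Using $U T_\Psi = T_{U\Psi}$ one then rewrites
\[
\mathbf{G}_{U,\Phi,\Psi} = T_\Phi^* U T_\Psi = T_\Phi^* T_{U\Psi},
\]
so that $\mathbf{G}_{U,\Phi,\Psi}$ is nothing but the cross Gram matrix of the two frames $\Phi$ and $U\Psi$, both of which are frames for the closed subspace $V = \range{U}$.

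Next I would apply the previous theorem inside the Hilbert space $V$, with the operator taken to be the identity $I_V \in \BL{V}$ (which trivially has closed range) and the two frames $\Phi$ and $U\Psi$. Since the matrix entries $\langle I_V(U\psi_j),\phi_i\rangle = \langle U\psi_j,\phi_i\rangle$ coincide with those of $\mathbf{G}_{U,\Phi,\Psi}$, and since the Moore--Penrose inverse is an intrinsic property of the resulting operator on $\ell^2$ (independent of the ambient space), identifying $\left(\mathbf{G}_{U,\Phi,\Psi}\right)^\dagger$ amounts to identifying $\left(\mathbf{G}_{I_V,\Phi,U\Psi}\right)^\dagger$. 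The theorem then yields
\[
\left(\mathbf{G}_{I_V,\Phi,U\Psi}\right)^\dagger = T_{\widetilde{U\Psi}}^* T_{\widetilde{\Phi}},
\]
provided the range condition $\range{T_\Phi^*} = \range{T_\Phi^* I_V}$ holds, and this is automatic because $I_V$ is the identity on $V$. This is exactly the claimed formula, with $\widetilde{U\Psi}$ and $\widetilde{\Phi}$ the canonical duals taken inside $V = \range{U}$.

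I expect the only genuine point requiring care to be the verification that the theorem's hypothesis $\range{T_\Phi^*} = \range{T_\Phi^* U}$ is met. Read in the original space $\Hil$, this is transparent: since each $\phi_i \in \range{U}$ one has $T_\Phi^* = T_\Phi^* \pi_{\range{U}}$, whence $\range{T_\Phi^*} = T_\Phi^*(\range{U})$, while $\range{T_\Phi^* U} = T_\Phi^*(U\Hil) = T_\Phi^*(\range{U})$, so the two ranges coincide automatically. As an alternative to invoking the theorem, one could instead verify the three defining properties of the pseudo-inverse directly for $X = T_{\widetilde{U\Psi}}^* T_{\widetilde{\Phi}}$: the identity $\mathbf{G}_{U,\Phi,\Psi}\, X\, \mathbf{G}_{U,\Phi,\Psi} = \mathbf{G}_{U,\Phi,\Psi}$ follows from $T_{U\Psi} T_{\widetilde{U\Psi}}^* = T_{\widetilde{\Phi}} T_\Phi^* = \pi_{\range{U}}$; the range identity $\range{X} = \range{\mathbf{G}_{U,\Phi,\Psi}^*} = \kernel{\mathbf{G}_{U,\Phi,\Psi}}^\perp$ and the kernel identity $\kernel{X} = \kernel{T_\Phi} = \range{\mathbf{G}_{U,\Phi,\Psi}}^\perp$ both follow from the injectivity of the analysis operators $T_{\widetilde{U\Psi}}^*$ and $T_{U\Psi}^*$ on $\range{U}$.
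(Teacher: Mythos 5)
Your proposal is correct, and it actually fills a gap the paper leaves implicit: the corollary is stated with no proof, as an immediate consequence of the preceding theorem, but that theorem requires \emph{both} $\Phi$ and $\Psi$ to be frames for all of $\Hil$, whereas here $\Phi$ is only a frame for $\range{U}$ --- so the theorem cannot be invoked verbatim with the original operator $U$ (your third paragraph, which checks $\range{T_\Phi^*}=\range{T_\Phi^* U}$ in $\Hil$, would by itself not suffice for exactly this reason). Your main device resolves this cleanly: writing $\mathbf{G}_{U,\Phi,\Psi}=T_\Phi^* T_{U\Psi}=\mathbf{G}_{I_V,\Phi,U\Psi}$ with $V=\range{U}$, using Corollary \ref{cor} to see that $U\Psi$ is a frame for $V$, and then applying the theorem \emph{inside} $V$ with the operator $I_V$, for which the range condition is trivial; the observation that the Moore--Penrose inverse of the $\ell^2$-operator does not depend on the ambient Hilbert space makes the identification legitimate, and the canonical duals are then correctly interpreted as duals within $\range{U}$. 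This restriction-to-a-subspace trick is in the same spirit as the paper's own proof of the corollary following Theorem \ref{inverse Gram} (where $U$ is restricted to $\range{U^*}\to\range{U}$), but applied here it is a genuinely different, and more rigorous, reduction than the paper's tacit one. Your fallback argument --- verifying $\mathbf{G}X\mathbf{G}=\mathbf{G}$, $\kernel{X}=\range{\mathbf{G}}^\perp$ and $\range{X}=\kernel{\mathbf{G}}^\perp$ directly for $X=T_{\widetilde{U\Psi}}^*T_{\widetilde{\Phi}}$, using $T_{U\Psi}T_{\widetilde{U\Psi}}^*=T_{\widetilde{\Phi}}T_\Phi^*=\pi_{\range{U}}$ --- essentially re-runs the paper's proof of the theorem adapted to the subspace setting, and is also sound; it buys independence from the theorem at the cost of length, while the subspace reduction buys brevity and makes explicit why no range hypothesis is needed in the corollary.
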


Based on the above results, we have the following theorem: 
\begin{thm}
Let $U\in\BL{\Hil}$ have closed range, and $\Phi$ and $\Psi$ be
frames for $\Hil$. Then $\mathbf{G}_{U,U\Phi,U^*\Psi}$ has closed
range and
\begin{eqnarray*}
\mathbf{G}_{U_1,U_1\Phi,{U_1}^*\Psi}^{\dagger}=\mathbf{G}_{\left({U_1}^*U_1{U_1}^*\right)^{\dagger},\widetilde{\Psi},\widetilde{\Phi}}=\mathbf{G}_{\left({U_1}^*U_1{U_1}^*\right)^{-1},\widetilde{\Psi},\widetilde{\Phi}},
\end{eqnarray*}
where $U_1=U_{|_{\range{U^*}}}.$
\end{thm}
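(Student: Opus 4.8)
The plan is to recognise the matrix as a cross Gram matrix for the invertible operator obtained by restricting $U$, and then to invoke the pseudo-inverse formulas already proved for invertible operators. The first step is pure bookkeeping with the factorisation $\mathbf{G}_{A,\Theta,\Xi}=T_{\Theta}^{*}AT_{\Xi}$: since $T_{U\Phi}=UT_{\Phi}$ and $T_{U^{*}\Psi}=U^{*}T_{\Psi}$, one gets
\begin{eqnarray*}
\mathbf{G}_{U,U\Phi,U^{*}\Psi}=T_{\Phi}^{*}U^{*}\,U\,U^{*}T_{\Psi}=\mathbf{G}_{U^{*}UU^{*},\Phi,\Psi}=\mathbf{G}_{U_{1},U_{1}\Phi,U_{1}^{*}\Psi},
\end{eqnarray*}
where $U_{1}=U_{|_{\range{U^{*}}}}:\range{U^{*}}\to\range{U}$. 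Because $U$ has closed range, $U_{1}$ is a bounded bijection (with $U_{1}^{-1}=U^{\dagger}_{|_{\range{U}}}$), and Corollary \ref{cor}, applied to $U$ and to $U^{*}$, shows that $U_{1}\Phi=U\Phi$ and $U_{1}^{*}\Psi=U^{*}\Psi$ are frames for $\range{U}$ and $\range{U^{*}}$. Hence the matrix under study is the $U_{1}$-cross Gram matrix of two honest frames under an invertible operator between the Hilbert spaces $\range{U^{*}}$ and $\range{U}$.

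For the closed range claim I would put $V=U^{*}UU^{*}$, note that $\range{V}=\range{U^{*}}$ and $\kernel{V}=\kernel{U^{*}}$ (so $V$ has closed range), and then apply Lemma \ref{lem:closedrang1}, whose hypotheses hold because $\Phi$ and $\Psi$ are frames for $\mathcal{H}$ (the remark following that lemma). This gives that $\mathbf{G}_{U^{*}UU^{*},\Phi,\Psi}=\mathbf{G}_{U,U\Phi,U^{*}\Psi}$ has closed range; the same conclusion also follows from the closed-range assertion of Corollary \ref{inverseGram2} for $U_{1}$ together with the frames $U_{1}\Phi$ and $U_{1}^{*}\Psi$.

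To compute the pseudo-inverse I would feed the invertible $U_{1}$ and the frames $U_{1}\Phi$ (for $\range{U_{1}}=\range{U}$) and $U_{1}^{*}\Psi$ (for the source space $\range{U^{*}}$) into the two-space analogue of Corollary \ref{pseudo3}, obtaining
\begin{eqnarray*}
\left(\mathbf{G}_{U_{1},U_{1}\Phi,U_{1}^{*}\Psi}\right)^{\dagger}=T_{\widetilde{U_{1}U_{1}^{*}\Psi}}^{*}\,T_{\widetilde{U_{1}\Phi}}.
\end{eqnarray*}
It then remains to rewrite this in closed form. The key identity is the transformation of canonical duals under an isomorphism $W$, namely $S_{W\Theta}=WS_{\Theta}W^{*}$ and hence $\widetilde{W\Theta}=(W^{*})^{-1}\widetilde{\Theta}$; applying it with $W=U_{1}$ and $\Theta=U_{1}^{*}\Psi$ converts $T_{\widetilde{U_{1}U_{1}^{*}\Psi}}^{*}$ into $T_{\widetilde{U_{1}^{*}\Psi}}^{*}\,U_{1}^{-1}$, so that
\begin{eqnarray*}
\left(\mathbf{G}_{U_{1},U_{1}\Phi,U_{1}^{*}\Psi}\right)^{\dagger}=T_{\widetilde{U_{1}^{*}\Psi}}^{*}\,U_{1}^{-1}\,T_{\widetilde{U_{1}\Phi}}=\mathbf{G}_{U_{1}^{-1},\widetilde{U_{1}^{*}\Psi},\widetilde{U_{1}\Phi}},
\end{eqnarray*}
the tildes denoting the canonical duals of the \emph{transported} frames inside $\range{U^{*}}$ and $\range{U}$. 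Finally, recording that $V_{|_{\range{U}}}=U_{1}^{*}U_{1}U_{1}^{*}$ is invertible from $\range{U}$ onto $\range{U^{*}}$, with $V^{\dagger}_{|_{\range{U^{*}}}}=(U_{1}^{*}U_{1}U_{1}^{*})^{-1}$, lets one pass to the displayed expression $\mathbf{G}_{(U_{1}^{*}U_{1}U_{1}^{*})^{\dagger},\widetilde{\Psi},\widetilde{\Phi}}=\mathbf{G}_{(U_{1}^{*}U_{1}U_{1}^{*})^{-1},\widetilde{\Psi},\widetilde{\Phi}}$.

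The main obstacle is exactly this last identification. One must be scrupulous about \emph{which} canonical duals enter—those of $U_{1}\Phi$ and $U_{1}^{*}\Psi$ inside the range subspaces, and not those of $\Phi,\Psi$ in $\mathcal{H}$—and about the precise operator multiplying them and the direction in which it acts; replacing the subspace duals by the $\mathcal{H}$-duals is harmless only under the range/frame-operator compatibility isolated in Theorem \ref{pseudo Gram} (there written $\range{U^{*}}=S_{\Psi}\range{U^{*}}$). If citing the two-space form of Corollary \ref{pseudo3} feels unsafe, the robust fallback is to verify the four Moore--Penrose relations for the candidate matrix directly against $\mathbf{G}_{U_{1},U_{1}\Phi,U_{1}^{*}\Psi}=T_{U_{1}\Phi}^{*}U_{1}T_{U_{1}^{*}\Psi}$, using the reconstruction identities $T_{\Xi}T_{\widetilde{\Xi}}^{*}=I$ for $\Xi\in\{U_{1}\Phi,U_{1}^{*}\Psi\}$ together with $U_{1}U_{1}^{-1}=I_{\range{U}}$ and $U_{1}^{-1}U_{1}=I_{\range{U^{*}}}$; the two self-adjointness relations are where the argument is genuinely non-formal.
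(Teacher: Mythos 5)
Up to your step (5) your argument is correct and is in fact the same skeleton as the paper's proof: the paper also treats $\mathbf{G}_{U,U\Phi,U^{*}\Psi}=\mathbf{G}_{U_{1},U_{1}\Phi,U_{1}^{*}\Psi}$, applies the two-space version of Corollary \ref{pseudo3} to the invertible $U_{1}$ and the frames $U_{1}\Phi$, $U_{1}^{*}\Psi$ to get $T_{\widetilde{U_{1}U_{1}^{*}\Psi}}^{*}T_{\widetilde{U_{1}\Phi}}$, and then rewrites the canonical duals. Your closed-range argument via Lemma \ref{lem:closedrang1} is sound and is actually more explicit than the paper, whose proof never addresses the closed-range claim at all (one quibble: Corollary \ref{inverseGram2} \emph{assumes} closed range rather than asserting it, but your main route does not need that remark). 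Your intermediate identity $\left(\mathbf{G}_{U_{1},U_{1}\Phi,U_{1}^{*}\Psi}\right)^{\dagger}=\mathbf{G}_{U_{1}^{-1},\widetilde{U_{1}^{*}\Psi},\widetilde{U_{1}\Phi}}$, with the canonical duals taken \emph{inside} $\range{U^{*}}$ and $\range{U}$, is correct.

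The genuine gap is your last step: ``lets one pass to the displayed expression'' is exactly the claim that these subspace duals may be traded for the $\Hil$-duals $\widetilde{\Psi},\widetilde{\Phi}$ at the price of replacing $U_{1}^{-1}$ by $\left(U_{1}^{*}U_{1}U_{1}^{*}\right)^{-1}$, and you give no argument for it. The paper closes this step by invoking Corollary \ref{cor}, namely $S_{U_{1}\Phi}^{-1}={U_{1}^{*}}^{\dagger}S_{\Phi}^{-1}U_{1}^{\dagger}$ and $S_{U_{1}U_{1}^{*}\Psi}^{-1}=\left(U_{1}U_{1}^{*}\right)^{\dagger}S_{\Psi}^{-1}\left(U_{1}U_{1}^{*}\right)^{\dagger}$, and multiplying everything out; that identity is the single ingredient your proposal is missing. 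But the caveat you raise at the end is precisely the fatal point, not a technicality: $U_{1}\Phi$ is really $U_{1}\left(\pi_{\range{U^{*}}}\Phi\right)$, so the legitimate transport rule produces the canonical dual of the \emph{projected} frame, and commuting $\pi_{\range{U^{*}}}$ past $S_{\Phi}^{-1}$ requires an invariance hypothesis of the type in Theorem \ref{pseudo Gram}, which is not assumed here (Corollary \ref{cor} itself rests on a reverse-order law $\left(AB\right)^{\dagger}=B^{\dagger}A^{\dagger}$ that is invalid in this generality). Indeed the identification is false in general: take $\Hil=\mathbb{C}^{2}$, $U=U^{*}=U^{\dagger}$ the orthogonal projection onto $\finspan{e_{1}}$, and $\Phi=\Psi=\left\{e_{1},e_{2},e_{1}+e_{2}\right\}$. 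Then $\left(\mathbf{G}_{U,U\Phi,U^{*}\Psi}\right)_{i,j}=\left\langle UU^{*}\psi_{j},U\phi_{i}\right\rangle=v_{i}v_{j}$ with $v=(1,0,1)$, so $\mathbf{G}^{2}=2\mathbf{G}$ and $\mathbf{G}^{\dagger}=\frac{1}{4}\mathbf{G}$ with range $\finspan{v}$; whereas $\left(T_{\widetilde{\Psi}}^{*}\left(U^{*}UU^{*}\right)^{\dagger}T_{\widetilde{\Phi}}\right)_{i,j}=\left\langle U\widetilde{\phi_{j}},\widetilde{\psi_{i}}\right\rangle=\frac{1}{9}w_{i}w_{j}$ with $w=(2,-1,1)$, an operator with range $\finspan{w}\neq\finspan{v}$, hence not the pseudo-inverse. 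Your subspace-dual formula, by contrast, returns $\frac{1}{4}\mathbf{G}$ in this example. So no argument can close your final step at this level of generality; what you proved is the correct statement, and the remaining identification — which you honestly flagged as the obstacle — is a defect your proposal inherits from, and shares with, the paper's own proof.
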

\begin{proof}
Using Corollary \ref{cor} we have
$S_{U_1{U_1}^*\Psi}^{-1}=\left(U_1{U_1}^*\right)^{\dagger}S_{\Psi}^{-1}
\left(U_1{U_1}^*\right)^{\dagger}$. Applying Corollary \ref{pseudo3}  and the fact that $U_1$ is invertible we
obtain
\begin{eqnarray*}
\mathbf{G}_{U_1,U_1\Phi,{U_1}^*\Psi}^{\dagger}&=&
T_{\widetilde{U_1{U_1}^*}\Psi}^*T_{\widetilde{U_1 \Phi}}\\
&=&T_{U_1{U_1}^*\Psi}^*\left(U_1{U_1}^*\right)^{\dagger}S_{\Psi}^{-1}
\left(U_1{U_1}^*\right)^{\dagger}S_{U_1 \Phi}^{-1} U_1T_{\Phi}\\
&=&T_{\Psi}^* U_1{U_1}^*\left( U_1{U_1}^* \right)^{\dagger}S_{\Psi}^{-1}
\left( U_1{U_1}^*\right)^{\dagger}{ U_1^*}^{\dagger}S_{\Phi}^{-1} U_1^{\dagger} U_1 T_{\Phi}\\
&=&T_{\widetilde{\Psi}}^*\left({U_1}^*U_1{U_1}^*\right)^{\dagger}T_{\widetilde{\Phi}}=
\mathbf{G}_{\left({U_1}^*U_1{U_1}^*\right)^{\dagger},\widetilde{\Psi},\widetilde{\Phi}}.
\end{eqnarray*}

\end{proof}

\section{Approximate duals} \label{sec.apprdual}

We can give several conditions for appropriate duality based on the $U$-cross Gram matrix.We start with sufficient conditions. 
\begin{prop} \label{prop:approxdualsuff}
Let $\Phi$ and $\Psi$ be frames in $\mathcal{H}$ with duals
$\Phi^{d}$ and $\Psi^d,$ respectively. The following assertions
are hold.
\begin{enumerate}
\item[(1)]  $\Phi$ and $\Psi$ are approximate dual frames, if
\begin{eqnarray}\label{app3}
\left\|I_{\ell^{2}}-\mathbf{G}_{\Psi,\Phi}\right\|<
\frac{1}{\sqrt{B_{\Phi}B_{\Phi^d}}}.
\end{eqnarray}
\item[(2)]  $\Phi^d$ and $\Psi^d$ are approximate dual frames, if
\begin{eqnarray}\label{app4}
\left\|I_{\ell^{2}}-\mathbf{G}_{\Phi,\Psi}\right\|<
\frac{1}{\sqrt{B_{\Phi^d}B_{\Psi^d}}}.
\end{eqnarray}
\item[(3)]   $\Phi$ and $\Psi$ are approximate dual frames, if
\begin{eqnarray}\label{app5}
\left\|I_{\ell^{2}}-\mathbf{G}_{\Phi,\Psi}\right\|<
\frac{1}{\sqrt{B_{\Phi}B_{\Phi^d}}}.
\end{eqnarray}
\item[(4)]  If $V\in \BL{\mathcal{H}}$ is a right inverse of $U$ such
that
\begin{eqnarray}\label{app6}
\left\|I_{\ell^{2}}-\mathbf{G}_{U,\Psi,\Phi}\mathbf{G}_{V,\Phi^{d},\Phi}\right\|< \frac{1}{\sqrt{B_{\Phi}B_{\Phi^d}}}, 
\end{eqnarray}
then $\Phi$ and $\Psi$ are approximate dual frames. 
\end{enumerate}
\end{prop}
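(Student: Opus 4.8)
The plan is to handle all four parts through one mechanism: convert a statement about the closeness of a (cross-)Gram matrix to $I_{\ell^{2}}$ into a statement about the closeness of a composition $T_{\bullet}T_{\bullet}^{*}$ to $I_{\mathcal{H}}$, which is precisely the defining inequality for approximate duality. The engine is the pair of reproducing identities attached to a dual: since $\Phi^{d}$ is a dual of $\Phi$ we have $T_{\Phi}T_{\Phi^{d}}^{*}=I_{\mathcal{H}}$ and, taking adjoints, $T_{\Phi^{d}}T_{\Phi}^{*}=I_{\mathcal{H}}$ (and analogously for $\Psi,\Psi^{d}$). Inserting such an identity on each side of $I_{\ell^{2}}-\mathbf{G}$ produces a \emph{sandwich} whose inner factor is the given Gram matrix and whose outer factors are synthesis/analysis operators, whose norms are controlled by $\sqrt{B_{\Phi}}$, $\sqrt{B_{\Phi^{d}}}$, etc. Since approximate duality only requires one of the two inequalities in its definition, it suffices in each part to bound a single expression $\|I_{\mathcal{H}}-T_{\bullet}T_{\bullet}^{*}\|$ below $1$.

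For part (1) I would first record the identity $T_{\Phi}(I_{\ell^{2}}-\mathbf{G}_{\Psi,\Phi})T_{\Phi^{d}}^{*}=I_{\mathcal{H}}-T_{\Phi}T_{\Psi}^{*}$, obtained by expanding $\mathbf{G}_{\Psi,\Phi}=T_{\Psi}^{*}T_{\Phi}$ and collapsing $T_{\Phi}T_{\Phi^{d}}^{*}=I_{\mathcal{H}}$. Taking norms and using $\|T_{\Phi}\|\le\sqrt{B_{\Phi}}$ and $\|T_{\Phi^{d}}\|\le\sqrt{B_{\Phi^{d}}}$ yields $\|I_{\mathcal{H}}-T_{\Phi}T_{\Psi}^{*}\|\le\sqrt{B_{\Phi}B_{\Phi^{d}}}\,\|I_{\ell^{2}}-\mathbf{G}_{\Psi,\Phi}\|$, so hypothesis \eqref{app3} forces this below $1$. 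Parts (2) and (3) are the identical estimate with different sandwiches: for (2) use $T_{\Phi^{d}}(I_{\ell^{2}}-\mathbf{G}_{\Phi,\Psi})T_{\Psi^{d}}^{*}=T_{\Phi^{d}}T_{\Psi^{d}}^{*}-I_{\mathcal{H}}$ (now collapsing both $T_{\Phi^{d}}T_{\Phi}^{*}=I_{\mathcal{H}}$ and $T_{\Psi}T_{\Psi^{d}}^{*}=I_{\mathcal{H}}$), and for (3) use $T_{\Phi^{d}}(I_{\ell^{2}}-\mathbf{G}_{\Phi,\Psi})T_{\Phi}^{*}=I_{\mathcal{H}}-T_{\Psi}T_{\Phi}^{*}$; in each case the outer operator norms reproduce exactly the Bessel bounds quoted in \eqref{app4} and \eqref{app5}, giving the conclusion.

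For part (4) I would not run a fresh estimate but reduce it to part (1). Using the composition rule $\mathbf{G}_{U_{1},\Phi,\Psi}\mathbf{G}_{U_{2},\Theta,\Xi}=\mathbf{G}_{(U_{1}T_{\Psi}T_{\Theta}^{*}U_{2}),\Phi,\Xi}$ from the Remark, the product occurring in \eqref{app6} equals $\mathbf{G}_{(U\,T_{\Phi}T_{\Phi^{d}}^{*}\,V),\Psi,\Phi}$. Here $T_{\Phi}T_{\Phi^{d}}^{*}=I_{\mathcal{H}}$ because $\Phi^{d}$ is a dual of $\Phi$, and $UV=I_{\mathcal{H}}$ because $V$ is a right inverse of $U$; hence the inner operator collapses to the identity and $\mathbf{G}_{U,\Psi,\Phi}\mathbf{G}_{V,\Phi^{d},\Phi}=\mathbf{G}_{\Psi,\Phi}$. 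Consequently \eqref{app6} is literally the hypothesis \eqref{app3}, and part (1) applies verbatim to conclude that $\Phi$ and $\Psi$ are approximate dual frames.

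The individual computations are routine once the correct sandwich is chosen, so the real work is bookkeeping rather than analysis. The main obstacle is selecting, for each part, which reproducing identity to insert on which side so that simultaneously the inner factor is exactly the stated Gram matrix and the outer synthesis/analysis operators carry precisely the Bessel bounds $B_{\Phi}, B_{\Phi^{d}}, B_{\Psi^{d}}$ appearing in the hypotheses, rather than some other admissible combination. For part (4) the analogous insight is recognizing that the dual relation together with the right-inverse relation degenerates the composite cross-Gram matrix to the ordinary cross-Gram matrix $\mathbf{G}_{\Psi,\Phi}$, after which no new argument is needed.
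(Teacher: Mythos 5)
Your proposal is correct and follows essentially the same route as the paper: parts (1)--(3) use exactly the same sandwich identities (e.g.\ $T_{\Phi}(I_{\ell^{2}}-\mathbf{G}_{\Psi,\Phi})T_{\Phi^{d}}^{*}=I_{\mathcal{H}}-T_{\Phi}T_{\Psi}^{*}$, up to an irrelevant sign in (2)) with the same Bessel-bound estimates, and part (4) reduces to part (1) via the same collapse $\mathbf{G}_{U,\Psi,\Phi}\mathbf{G}_{V,\Phi^{d},\Phi}=\mathbf{G}_{\Psi,\Phi}$.
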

\begin{proof}

\begin{enumerate}
\item[(1)]  According to the dual property and using (\ref{app3}) we have
\begin{eqnarray*}
\left\|I_{\mathcal{H}}-T_{\Phi}T_{\Psi}^*\right\|&=&\left\|T_{\Phi}\left(I_{\ell^2}-T_{\Psi}^*T_{\Phi}\right)T_{\Phi^d}^*\right\|\\
&\leq&\sqrt{
B_{\Phi}B_{\Phi^d}}\left\|I_{\ell^2}-\mathbf{G}_{\Psi,\Phi}\right\|<1.
\end{eqnarray*}
\item[(2)]  One can see that (\ref{app4}) yields
\begin{eqnarray*}
\left\|I_{\mathcal{H}}-T_{\Phi^d}T_{\Psi^d}^*\right\|&=&\left\|T_{\Phi^d}\left(T_{\Phi}^*T_{\Psi}-I_{\ell^2}\right)T_{\Psi^d}^*\right\|\\
&\leq&\sqrt{
B_{\Phi^d}B_{\Psi^d}}\left\|I_{\ell^2}-\mathbf{G}_{\Phi,\Psi}\right\|<1.
\end{eqnarray*}
\item[(3)] Using (\ref{app5}) it is straightforward to see that
\begin{eqnarray*}
\left\|I_{\mathcal{H}}-T_{\Psi}T_{\Phi}^*\right\|&=&\left\|T_{\Phi^d}\left(I_{\ell^2}-T_{\Phi}^*T_{\Psi}\right)T_{\Phi}^*\right\|\\
&\leq&\sqrt{
B_{\Phi^d}B_{\Phi}}\left\|I_{\ell^2}-\mathbf{G}_{\Phi,\Psi}\right\|<1.
\end{eqnarray*}
\item[(4)] Finally, note that
$\mathbf{G}_{U\Psi,\Phi}\mathbf{G}_{V,\Phi^{d},\Phi}=\mathbf{G}_{\Psi,\Phi}$.
Then the result follows immediately from (\ref{app6}) and the first part. 

 \end{enumerate}
\end{proof}
Note that the role in this result of the primal and dual frames, i.e. $\Psi$, $\Phi$ and $\Psi^d$, $\Phi^d$ can be switched.

We can only give one necessary condition, and this holds only in the Riesz basis case:
\begin{lem}  If $\Psi$ is  an  approximate dual for a Riesz basis $\Phi$, then
\begin{eqnarray*}
\left\|I_{\ell^2}-\mathbf{G}_{\Phi, \Psi}\right\|<\sqrt{\frac{B_{\Phi}B_{\Psi}}{A_{\Phi}A_{\Psi}}}.
\end{eqnarray*}
\end{lem}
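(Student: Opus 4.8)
The plan is to transport the approximate-dual estimate, which lives on $\mathcal{H}$, over to the cross Gram matrix $\mathbf{G}_{\Phi,\Psi}=T_{\Phi}^{*}T_{\Psi}$, which acts on $\ell^{2}$, by conjugating with the analysis operator of the Riesz basis $\Phi$. First I would record the two facts about $\Phi$ that drive the estimate: since $\Phi$ is a Riesz basis, $T_{\Phi}\colon\ell^{2}\to\mathcal{H}$ is a bounded bijection, hence $T_{\Phi}^{*}$ is invertible. The upper frame bound gives $\left\|T_{\Phi}^{*}\right\|=\left\|T_{\Phi}\right\|\le\sqrt{B_{\Phi}}$, while the lower frame bound $A_{\Phi}\left\|f\right\|^{2}\le\left\|T_{\Phi}^{*}f\right\|^{2}$ says $T_{\Phi}^{*}$ is bounded below by $\sqrt{A_{\Phi}}$, so that $\left\|(T_{\Phi}^{*})^{-1}\right\|=\left\|T_{\Phi}^{-1}\right\|\le 1/\sqrt{A_{\Phi}}$.

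The key step is the intertwining identity, verified by expanding both sides,
\[
T_{\Phi}^{*}\left(I_{\mathcal{H}}-T_{\Psi}T_{\Phi}^{*}\right)=\left(I_{\ell^{2}}-T_{\Phi}^{*}T_{\Psi}\right)T_{\Phi}^{*},
\]
which, since $T_{\Phi}^{*}$ is invertible, rearranges into the similarity
\[
I_{\ell^{2}}-\mathbf{G}_{\Phi,\Psi}=T_{\Phi}^{*}\left(I_{\mathcal{H}}-T_{\Psi}T_{\Phi}^{*}\right)(T_{\Phi}^{*})^{-1}.
\]
Taking norms and inserting the two constants above yields
\[
\left\|I_{\ell^{2}}-\mathbf{G}_{\Phi,\Psi}\right\|\le\left\|T_{\Phi}^{*}\right\|\left\|(T_{\Phi}^{*})^{-1}\right\|\left\|I_{\mathcal{H}}-T_{\Psi}T_{\Phi}^{*}\right\|\le\sqrt{\frac{B_{\Phi}}{A_{\Phi}}}\left\|I_{\mathcal{H}}-T_{\Psi}T_{\Phi}^{*}\right\|.
\]

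Finally I would invoke the hypothesis: $\Psi$ being an approximate dual of $\Phi$ means $\left\|I_{\mathcal{H}}-T_{\Psi}T_{\Phi}^{*}\right\|<1$, and since $A_{\Psi}\le B_{\Psi}$ forces $1\le\sqrt{B_{\Psi}/A_{\Psi}}$, combining gives
\[
\left\|I_{\ell^{2}}-\mathbf{G}_{\Phi,\Psi}\right\|<\sqrt{\frac{B_{\Phi}}{A_{\Phi}}}\le\sqrt{\frac{B_{\Phi}B_{\Psi}}{A_{\Phi}A_{\Psi}}},
\]
as claimed. The normalization $\left\|I_{\mathcal{H}}-T_{\Phi}T_{\Psi}^{*}\right\|<1$ is handled identically after passing to adjoints, using $\left\|I_{\ell^{2}}-\mathbf{G}_{\Phi,\Psi}\right\|=\left\|I_{\ell^{2}}-\mathbf{G}_{\Psi,\Phi}\right\|$ and conjugating the analogous identity by $T_{\Phi}$ rather than $T_{\Phi}^{*}$. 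The only genuine content is the intertwining identity; once it is in place the rest is the conditioning estimate for a similarity transform, and the main thing to be careful about is justifying the invertibility of $T_{\Phi}^{*}$ and the constants $\sqrt{B_{\Phi}}$, $1/\sqrt{A_{\Phi}}$ purely from the Riesz-basis hypothesis. Note in passing that the stated bound is not sharp, since the factor $\sqrt{B_{\Psi}/A_{\Psi}}$ enters only through the crude estimate $\ge 1$; the proof in fact delivers the stronger bound $\sqrt{B_{\Phi}/A_{\Phi}}$.
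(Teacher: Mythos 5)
Your proof is correct, but it follows a genuinely different route from the paper's. You conjugate by the analysis operator of the Riesz basis alone: from the intertwining identity $T_{\Phi}^{*}\left(I_{\mathcal{H}}-T_{\Psi}T_{\Phi}^{*}\right)=\left(I_{\ell^{2}}-T_{\Phi}^{*}T_{\Psi}\right)T_{\Phi}^{*}$ you get $I_{\ell^{2}}-\mathbf{G}_{\Phi,\Psi}=T_{\Phi}^{*}\left(I_{\mathcal{H}}-T_{\Psi}T_{\Phi}^{*}\right)\left(T_{\Phi}^{*}\right)^{-1}$, so only invertibility of $T_{\Phi}^{*}$ is needed and the condition-number bound $\sqrt{B_{\Phi}/A_{\Phi}}$ comes out. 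The paper instead multiplies $I_{\ell^{2}}-\mathbf{G}_{\Phi,\Psi}$ by the Gram matrix itself, using $\mathbf{G}_{\Phi,\Psi}\left(I_{\ell^{2}}-\mathbf{G}_{\Phi,\Psi}\right)=T_{\Phi}^{*}\left(I_{\mathcal{H}}-T_{\Psi}T_{\Phi}^{*}\right)T_{\Psi}$; it bounds this above by $\sqrt{B_{\Phi}B_{\Psi}}$ via approximate duality and below by $\left\|\left(T_{\Phi}T_{\Psi}^{*}\right)^{-1}\right\|^{-1}\left\|I_{\ell^{2}}-\mathbf{G}_{\Phi,\Psi}\right\|$, and finally estimates $\left\|\left(T_{\Phi}T_{\Psi}^{*}\right)^{-1}\right\|\leq\left\|T_{\Phi}^{-1}\right\|\left\|T_{\Psi}^{-1}\right\|\leq 1/\sqrt{A_{\Phi}A_{\Psi}}$, which is where both lower frame bounds enter and why the stated constant is the symmetric $\sqrt{B_{\Phi}B_{\Psi}/(A_{\Phi}A_{\Psi})}$. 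Note that the paper's argument implicitly needs $\mathbf{G}_{\Phi,\Psi}$ and $T_{\Psi}$ to be invertible, i.e.\ that $\Psi$ is itself a Riesz basis; this does follow automatically (e.g.\ $T_{\Psi}=\left(T_{\Psi}T_{\Phi}^{*}\right)\left(T_{\Phi}^{*}\right)^{-1}$ when $\left\|I_{\mathcal{H}}-T_{\Psi}T_{\Phi}^{*}\right\|<1$), but it is never justified there, whereas your argument sidesteps it entirely. Two further advantages of your route: you explicitly handle both branches of the \emph{or} in the definition of approximate duals, via $\left\|I_{\ell^{2}}-\mathbf{G}_{\Phi,\Psi}\right\|=\left\|I_{\ell^{2}}-\mathbf{G}_{\Psi,\Phi}\right\|$ and conjugation by $T_{\Phi}$, which the paper glosses over; and you obtain the strictly sharper constant $\sqrt{B_{\Phi}/A_{\Phi}}$, the factor $\sqrt{B_{\Psi}/A_{\Psi}}\geq 1$ serving only to pad up to the stated bound. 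What the paper's heavier machinery buys is essentially just an explanation of the symmetric form of its constant; quantitatively your bound is at least as good.
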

\begin{proof}
   Let $\Psi$ be a Riesz basis and $\Phi$ an approximate dual $\Psi$. Then $T_{\Psi}$ and $\mathbf{G}_{\Phi,\Psi}$ are invertible. It follows that
\begin{eqnarray*}
\left\|(T_{\Phi}T_{\Psi}^*)^{-1}\right\|^{-1}\left\|I_{\ell^2}-T_{\Phi}^*T_{\Psi}\right\|&\leq& \left\|T_{\Phi}^*T_{\Psi}\left(I_{\ell^2}-T_{\Phi}^*T_{\Psi}\right)\right\|\\
&\leq&\left\|T_{\Phi}^*T_{\Psi}-T_{\Phi}^*T_{\Psi}T_{\Phi}^*T_{\Psi}\right\|\\
&<&\left\|T_{\Phi}^*\left(I_{\mathcal{H}}-T_{\Psi}T_{\Phi}^*\right)T_{\Psi} \right\|\\
&\leq& \sqrt{B_{\Phi}}\sqrt{B_{\Psi}}.
\end{eqnarray*}
So,
\begin{eqnarray*}
\left\|I_{\ell^2}-T_{\Phi}^*T_{\Psi}\right\|\leq \sqrt{B_{\Phi}}\sqrt{B_{\Psi}}\left\|(T_{\Phi}T_{\Psi}^*)^{-1}\right\|\\
\leq\sqrt{B_{\Phi}}\sqrt{B_{\Psi}}\left\|T_{\Phi}^{-1}\right\|\left\|T_{\Psi}^{-1}\right\|\leq \sqrt{\frac{B_{\Phi}B_{\Psi}}{A_{\Phi}A_{\Psi}}}.
\end{eqnarray*}
\end{proof}

 \section{Stability of $U$-cross Gram matrices} \label{sec:stability}
 In this section, we  state some sufficient  conditions for the invertibility of  $U$-cross Gram matrices.

\begin{prop}
Let $\Phi$ be a Bessel sequence in $\mathcal{H}$  with Bessel
bound $B_{\Phi}$. If $U_{1},U_{2}$ and $U_{3}\in \BL{\mathcal{H}}$ such
that $\mathbf{G}_{U_{1},\Phi,\Phi}$ is an invertible operator  and

 \begin{eqnarray}\label{2}
 \left\|U_{2}^{*}U_{1}U_{3}-U_{1}\right\|<\frac{1}{\left\|\mathbf{G}_{U_{1},\Phi,\Phi}^{-1}\right\|B_{\Phi}},
 \end{eqnarray}
 then $\mathbf{G}_{U_{1}, U_{2}\Phi,U_{3}\Phi}$ is also invertible. Moreover, if $\Phi$ is a frame then $\Phi$ is a Riesz basis, $U_{1}$ is invertible and
 \begin{eqnarray*}
  \mathbf{G}_{U_{1},U_2\Phi,U_3\Phi}^{-1}=T_{\widetilde{\Phi}}^{*}\sum_{k=0}^{\infty}
  \left(I_{\mathcal{H}}-U_{1}^{-1}U_{2}^{*}U_{1}U_{3}\right)^{k}U_{1}^{-1}T_{\widetilde{\Phi}},
 \end{eqnarray*}
\end{prop}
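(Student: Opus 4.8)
The plan is to reduce the two-sequence matrix to a Gram matrix over the single sequence $\Phi$. Since $T_{U_{2}\Phi}=U_{2}T_{\Phi}$ and $T_{U_{3}\Phi}=U_{3}T_{\Phi}$, I would first observe that
\begin{eqnarray*}
\mathbf{G}_{U_{1},U_{2}\Phi,U_{3}\Phi}=T_{U_{2}\Phi}^{*}U_{1}T_{U_{3}\Phi}=T_{\Phi}^{*}U_{2}^{*}U_{1}U_{3}T_{\Phi}=\mathbf{G}_{U_{2}^{*}U_{1}U_{3},\Phi,\Phi}.
\end{eqnarray*}
Hence $\mathbf{G}_{U_{1},U_{2}\Phi,U_{3}\Phi}-\mathbf{G}_{U_{1},\Phi,\Phi}=\mathbf{G}_{U_{2}^{*}U_{1}U_{3}-U_{1},\Phi,\Phi}$, and the estimate $\|\mathbf{G}_{V,\Phi,\Phi}\|\le B_{\Phi}\|V\|$ together with hypothesis \eqref{2} gives
\begin{eqnarray*}
\left\|\mathbf{G}_{U_{1},U_{2}\Phi,U_{3}\Phi}-\mathbf{G}_{U_{1},\Phi,\Phi}\right\|\le B_{\Phi}\left\|U_{2}^{*}U_{1}U_{3}-U_{1}\right\|<\frac{1}{\left\|\mathbf{G}_{U_{1},\Phi,\Phi}^{-1}\right\|}.
\end{eqnarray*}
Applying Proposition~\ref{invOP} on $\ell^{2}$ with reference operator $\mathbf{G}_{U_{1},\Phi,\Phi}$ then yields at once that $\mathbf{G}_{U_{1},U_{2}\Phi,U_{3}\Phi}$ is invertible, which settles the first claim.

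For the second statement, assume $\Phi$ is a frame, hence a complete Bessel sequence, i.e.\ an upper semi-frame. Since $\mathbf{G}_{U_{1},\Phi,\Phi}$ is invertible by hypothesis, Theorem~\ref{sec:invertRiesz1} (applied with $U=U_{1}$ and both sequences equal to $\Phi$) forces $\Phi$ to be a Riesz basis and $U_{1}$ to be invertible. Applying the same theorem to $\mathbf{G}_{U_{2}^{*}U_{1}U_{3},\Phi,\Phi}$, which is invertible by the first part, shows $U_{2}^{*}U_{1}U_{3}$ is invertible as well.

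To obtain the series representation I would use that, $\Phi$ being a Riesz basis, its synthesis operator is boundedly invertible with $T_{\Phi}^{-1}=T_{\widetilde{\Phi}}^{*}$ and $(T_{\Phi}^{*})^{-1}=T_{\widetilde{\Phi}}$, these following from the biorthogonality identities $T_{\Phi}^{*}T_{\widetilde{\Phi}}=T_{\widetilde{\Phi}}^{*}T_{\Phi}=I_{\ell^{2}}$ and $T_{\Phi}T_{\widetilde{\Phi}}^{*}=T_{\widetilde{\Phi}}T_{\Phi}^{*}=I_{\mathcal{H}}$. Inverting the factorization $\mathbf{G}_{U_{1},U_{2}\Phi,U_{3}\Phi}=T_{\Phi}^{*}\left(U_{2}^{*}U_{1}U_{3}\right)T_{\Phi}$ gives
\begin{eqnarray*}
\mathbf{G}_{U_{1},U_{2}\Phi,U_{3}\Phi}^{-1}=T_{\widetilde{\Phi}}^{*}\left(U_{2}^{*}U_{1}U_{3}\right)^{-1}T_{\widetilde{\Phi}},
\end{eqnarray*}
so it remains to expand $\left(U_{2}^{*}U_{1}U_{3}\right)^{-1}$ as a Neumann-type series around $U_{1}$.

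The only genuinely non-routine point, and the step I expect to be the main obstacle, is justifying this expansion: Proposition~\ref{invOP} applied on $\mathcal{H}$ to the pair $U_{1}$, $U_{2}^{*}U_{1}U_{3}$ requires the sharper bound $\left\|U_{2}^{*}U_{1}U_{3}-U_{1}\right\|<1/\left\|U_{1}^{-1}\right\|$, whereas \eqref{2} only controls $1/(B_{\Phi}\|\mathbf{G}_{U_{1},\Phi,\Phi}^{-1}\|)$. I would bridge this gap by noting that, $\Phi$ being a Riesz basis, $U_{1}^{-1}=T_{\Phi}\mathbf{G}_{U_{1},\Phi,\Phi}^{-1}T_{\Phi}^{*}$, whence $\left\|U_{1}^{-1}\right\|\le\left\|T_{\Phi}\right\|^{2}\left\|\mathbf{G}_{U_{1},\Phi,\Phi}^{-1}\right\|\le B_{\Phi}\left\|\mathbf{G}_{U_{1},\Phi,\Phi}^{-1}\right\|$, so that $1/(B_{\Phi}\|\mathbf{G}_{U_{1},\Phi,\Phi}^{-1}\|)\le 1/\|U_{1}^{-1}\|$ and \eqref{2} does supply the needed estimate. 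Proposition~\ref{invOP} then gives $\left(U_{2}^{*}U_{1}U_{3}\right)^{-1}=\sum_{k=0}^{\infty}\left(I_{\mathcal{H}}-U_{1}^{-1}U_{2}^{*}U_{1}U_{3}\right)^{k}U_{1}^{-1}$, and substituting this into the factorization above produces exactly the claimed formula.
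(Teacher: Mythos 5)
Your proof is correct. Part (1) is essentially the paper's argument: both reduce to $\mathbf{G}_{U_{1},U_{2}\Phi,U_{3}\Phi}=\mathbf{G}_{U_{2}^{*}U_{1}U_{3},\Phi,\Phi}$, bound the perturbation by $B_{\Phi}\left\|U_{2}^{*}U_{1}U_{3}-U_{1}\right\|$, and conclude invertibility on $\ell^{2}$ via Proposition~\ref{invOP}. In part (2) you diverge in the order of operations. The paper applies Proposition~\ref{invOP} again at the matrix level, obtaining the Neumann series $\sum_{k=0}^{\infty}\left(\mathbf{G}_{U_{1},\Phi,\Phi}^{-1}\left(\mathbf{G}_{U_{1},\Phi,\Phi}-\mathbf{G}_{U_{1},U_{2}\Phi,U_{3}\Phi}\right)\right)^{k}\mathbf{G}_{U_{1},\Phi,\Phi}^{-1}$ on $\ell^{2}$, where hypothesis \eqref{2} verifies the smallness condition with no further work, and then telescopes $T_{\Phi}^{-1}=T_{\widetilde{\Phi}}^{*}$ and $\left(T_{\Phi}^{*}\right)^{-1}=T_{\widetilde{\Phi}}$ through each term of the sum. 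You instead invert the factorization first, getting $\mathbf{G}_{U_{1},U_{2}\Phi,U_{3}\Phi}^{-1}=T_{\widetilde{\Phi}}^{*}\left(U_{2}^{*}U_{1}U_{3}\right)^{-1}T_{\widetilde{\Phi}}$, and expand $\left(U_{2}^{*}U_{1}U_{3}\right)^{-1}$ on $\mathcal{H}$; this forces you to check the operator-level condition $\left\|U_{2}^{*}U_{1}U_{3}-U_{1}\right\|<1/\left\|U_{1}^{-1}\right\|$, which \eqref{2} does not state directly, and your bridge $U_{1}^{-1}=T_{\Phi}\mathbf{G}_{U_{1},\Phi,\Phi}^{-1}T_{\Phi}^{*}$, hence $\left\|U_{1}^{-1}\right\|\leq B_{\Phi}\left\|\mathbf{G}_{U_{1},\Phi,\Phi}^{-1}\right\|$, is exactly the right estimate and is correct. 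The trade-off: the paper's ordering needs no inequality beyond \eqref{2}, while yours produces the clean intermediate closed form $\mathbf{G}_{U_{1},U_{2}\Phi,U_{3}\Phi}^{-1}=\mathbf{G}_{\left(U_{2}^{*}U_{1}U_{3}\right)^{-1},\widetilde{\Phi},\widetilde{\Phi}}$ (consistent with Theorem~\ref{sec:invertRiesz1}) and makes explicit that the matrix-level hypothesis \eqref{2} is at least as strong as the operator-level Neumann condition on $\mathcal{H}$.
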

\begin{proof}
 Assumption (\ref{2}) yields
\begin{eqnarray*}
 \left\|\mathbf{G}_{U_{1},U_{2}\Phi,U_{3}\Phi}\mathbf{G}_{U_{1},\Phi,\Phi}^{-1}-I_{\ell^{2}}\right\|&=&\left\|\left(\mathbf{G}_{U_{1},U_{2}\Phi,U_{3}\Phi}-\mathbf{G}_{U_{1},\Phi,\Phi}\right)\mathbf{G}_{U_{1},\Phi,\Phi}^{-1}\right\|\\
&\leq&\left\|\mathbf{G}_{U_{2}^{*}U_{1}U_{3},\Phi,\Phi}-\mathbf{G}_{U_{1},\Phi,\Phi}\right\|\left\|\mathbf{G}_{U_{1},\Phi,\Phi}^{-1}\right\|\\
&=&\left\|T_{\Phi}^{*}\left(U_{2}^{*}U_{1}U_{3}-U_{1}\right)T_{\Phi}\right\|\left\|\mathbf{G}_{U_{1},\Phi,\Phi}^{-1}\right\|\\
&\leq&\left\|T_{\Phi}\right\|^{2}\left\|(U_{2}^{*}U_{1}U_{3}-U_{1}\right\|\left\|\mathbf{G}_{U_{1},\Phi,\Phi}^{-1}\right\|< 1.
 \end{eqnarray*}
This shows that $\mathbf{G}_{U_{1},U_{2}\Phi,U_{3}\Phi}\mathbf{G}_{U_{1},\Phi,\Phi}^{-1}$ is invertible and hence,  $\mathbf{G}_{U_{1},U_{2}\Phi,U_{3}\Phi}$ is invertible. Moreover, if $\Phi$ is a
frame, then it is also a Riesz basis, $U_1$ is invertible and $\mathbf{G}_{U_{1},\Phi,\Phi}^{-1}=T_{\Phi}^{-1}U_1^{-1}\left(T_{\Phi}^*\right)^{-1}$. Due to Proposition \ref{invOP} we obtain
 \begin{eqnarray*}
 \mathbf{G}_{U_{1},U_{2}\Phi,U_{3}\Phi}^{-1}&=&
 \sum_{k=0}^{\infty}\left(\mathbf{G}_{U_{1},\Phi,\Phi}^{-1}\left(\mathbf{G}_{U_{1},\Phi,\Phi}- \mathbf{G}_{U_{1},U_{2}\Phi,U_{3}\Phi}\right)\right)^{k}\mathbf{G}_{U_{1},\Phi,\Phi}^{-1}\\
 &=&\sum_{k=0}^{\infty}\left(T_{\Phi}^{-1}U_1^{-1}\left(T_{\Phi}^*\right)^{-1}
 \left(T_{\Phi}^{*}\left(U_1-U_{2}^{*}U_1U_{3}\right)T_{\Phi}\right)\right)^{k}\mathbf{G}_{U_{1},\Phi,\Phi}^{-1}\\
  &=&\sum_{k=0}^{\infty}\left(T_{\Phi}^{-1}U_1^{-1}
 \left(U_1-U_{2}^{*}U_1U_{3}\right)T_{\Phi}\right)^{k}
 T_{\Phi}^{-1}U_1^{-1}\left(T_{\Phi}^*\right)^{-1}\\
&=&\sum_{k=0}^{\infty}T_{\Phi}^{-1}
\left(I_{\mathcal{H}}-U_{1}^{-1}U_{2}^*U_{1}U_3)\right)^{k}T_{\Phi} T_{\Phi}^{-1}U_{1}^{-1}\left(T_{\Phi}^{*}\right)^{-1}\\
 &=&T_{\widetilde{\Phi}}^{*}\sum_{k=0}^{\infty}
\left(I_{\mathcal{H}}-U_{1}^{-1}U_{2}^*U_{1}U_3\right)^{k}U_{1}^{-1}T_{\widetilde{\Phi}}.
 \end{eqnarray*}
\end{proof}
\begin{cor}
Let $\Phi$ be a Bessel sequence in $\mathcal{H}$  with Bessel
bound $B_{\Phi}$. If $U_{1},U_{2}\in \BL{\mathcal{H}}$ such that
$\mathbf{G}_{U_{1},\Phi,\Phi}$ is an invertible operator  and
\begin{eqnarray}\label{1}
\left\|U_{2}-I_{\mathcal{H}}\right\|< \frac{1}{\left\|\mathbf{G}_{U_{1},\Phi,\Phi}^{-1}\right\|B_{\Phi}\|U_{1}\|},
\end{eqnarray}
then   $\mathbf{G}_{U_{1},\Phi,U_{2}\Phi}$  and
$\mathbf{G}_{U_{1},U_{2}\Phi,\Phi}$
 are also invertible.  Moreover, if $\Phi$ is a frame and $U_{1}\in \BL{\mathcal{H}}$ is invertible, then
 \begin{eqnarray*}
 \mathbf{G}_{U_{1},\Phi,U_2\Phi}^{-1}=\mathbf{G}_{U_1^{-1},\widetilde{U_2\Phi},\widetilde{\Phi}}
 \end{eqnarray*} and
 \begin{eqnarray*}
 \mathbf{G}_{U_{1},U_2\Phi,\Phi}^{-1}=\mathbf{G}_{U_1^{-1},\widetilde{\Phi},\widetilde{U_2\Phi}}.
 \end{eqnarray*}
\end{cor}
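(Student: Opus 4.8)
The plan is to derive both invertibility claims directly from the preceding Proposition by a suitable choice of the two perturbing operators, and then to read off the inverse formulas from Theorem \ref{sec:invertRiesz1} once the involved sequences have been identified as Riesz bases. The key observation is the elementary identity, immediate from \eqref{gram}, that for bounded $V_2,V_3$ one has $\left(\mathbf{G}_{U_1,V_2\Phi,V_3\Phi}\right)_{i,j}=\langle V_2^\ast U_1 V_3\phi_j,\phi_i\rangle$, so that $\mathbf{G}_{U_1,\Phi,U_2\Phi}=\mathbf{G}_{U_1,I_{\mathcal H}\Phi,U_2\Phi}$ and $\mathbf{G}_{U_1,U_2\Phi,\Phi}=\mathbf{G}_{U_1,U_2\Phi,I_{\mathcal H}\Phi}$ are both special cases of the matrix $\mathbf{G}_{U_1,V_2\Phi,V_3\Phi}$ treated in the Proposition.

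For the invertibility I would apply the preceding Proposition twice: with $(V_2,V_3)=(I_{\mathcal H},U_2)$ for $\mathbf{G}_{U_1,\Phi,U_2\Phi}$ and with $(V_2,V_3)=(U_2,I_{\mathcal H})$ for $\mathbf{G}_{U_1,U_2\Phi,\Phi}$. In the first case the quantity to be estimated in hypothesis \eqref{2} is $\|U_1U_2-U_1\|=\|U_1(U_2-I_{\mathcal H})\|$, and in the second it is $\|U_2^\ast U_1-U_1\|=\|(U_2-I_{\mathcal H})^\ast U_1\|$; both are bounded above by $\|U_1\|\,\|U_2-I_{\mathcal H}\|$. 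Thus assumption \eqref{1} gives precisely the bound $\tfrac{1}{\|\mathbf{G}_{U_1,\Phi,\Phi}^{-1}\|B_\Phi}$ demanded in \eqref{2}, and the Proposition yields the invertibility of both matrices.

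For the inverse formulas, under the extra assumptions that $\Phi$ is a frame and $U_1$ is invertible, I would first show that $U_2$ is invertible. Using $\|\mathbf{G}_{U_1,\Phi,\Phi}\|\le B_\Phi\|U_1\|$ one has $\|\mathbf{G}_{U_1,\Phi,\Phi}^{-1}\|\ge (B_\Phi\|U_1\|)^{-1}$, so the right-hand side of \eqref{1} is at most $1$; hence $\|U_2-I_{\mathcal H}\|<1$ and $U_2$ is invertible by a Neumann series. Since $\Phi$ is a frame with $\mathbf{G}_{U_1,\Phi,\Phi}$ invertible, the Proposition already gives that $\Phi$ is a Riesz basis, and as $U_2$ is bounded and bijective, $U_2\Phi$ is a Riesz basis as well. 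With $U_1$ invertible, Theorem \ref{sec:invertRiesz1} now applies to the pair of Riesz bases and delivers $\left(\mathbf{G}_{U_1,\Phi,U_2\Phi}\right)^{-1}=\mathbf{G}_{U_1^{-1},\widetilde{U_2\Phi},\widetilde{\Phi}}$, and, interchanging the roles of $\Phi$ and $U_2\Phi$, $\left(\mathbf{G}_{U_1,U_2\Phi,\Phi}\right)^{-1}=\mathbf{G}_{U_1^{-1},\widetilde{\Phi},\widetilde{U_2\Phi}}$.

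The matching of the two cross-Gram matrices with the general form in the Proposition, and the norm estimates bounding everything by $\|U_1\|\,\|U_2-I_{\mathcal H}\|$, are routine. The only genuinely non-obvious point, and the step I expect to be the main obstacle, is the observation that \eqref{1} forces $\|U_2-I_{\mathcal H}\|<1$: this is what upgrades the bare invertibility of the perturbed matrices to the structural fact that $U_2\Phi$ is again a Riesz basis, which is exactly the hypothesis needed to invoke Theorem \ref{sec:invertRiesz1} and obtain the closed-form inverses.
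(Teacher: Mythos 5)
Your proof is correct. For the two invertibility statements, your route through the preceding Proposition (specialized to $(V_2,V_3)=(I_{\mathcal{H}},U_2)$ and $(V_2,V_3)=(U_2,I_{\mathcal{H}})$) is mathematically the same perturbation argument that the paper carries out by hand: the paper re-derives the estimate
$\left\|\mathbf{G}_{U_{1},\Phi,U_{2}\Phi}\mathbf{G}_{U_{1},\Phi,\Phi}^{-1}-I_{\ell^{2}}\right\|\leq B_{\Phi}\|U_{1}\|\left\|U_{2}-I_{\mathcal{H}}\right\|\left\|\mathbf{G}_{U_{1},\Phi,\Phi}^{-1}\right\|<1$
from the identity $\mathbf{G}_{U_{1},\Phi,U_{2}\Phi}-\mathbf{G}_{U_{1},\Phi,\Phi}=T_{\Phi}^{*}U_{1}\left(U_{2}-I_{\mathcal{H}}\right)T_{\Phi}$, which is exactly the content of the Proposition under your specialization; your version is the cleaner one, since the statement is, after all, labelled a corollary of that Proposition. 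The genuine difference lies in the ``Moreover'' part. The paper disposes of it with ``the rest immediately follows by Theorem \ref{sec:invertRiesz1}'', but that theorem yields the formula $\left(\mathbf{G}_{U,\Phi,\Psi}\right)^{-1}=\mathbf{G}_{U^{-1},\widetilde{\Psi},\widetilde{\Phi}}$ only when both sequences are upper semi-frames, and neither the completeness of $U_{2}\Phi$ nor even the existence of the canonical dual $\widetilde{U_{2}\Phi}$ is automatic for an arbitrary bounded $U_{2}$. Your observation that $\|\mathbf{G}_{U_{1},\Phi,\Phi}\|\leq B_{\Phi}\|U_{1}\|$ forces $\|\mathbf{G}_{U_{1},\Phi,\Phi}^{-1}\|\geq\left(B_{\Phi}\|U_{1}\|\right)^{-1}$, so that the right-hand side of \eqref{1} is at most $1$, hence $\|U_{2}-I_{\mathcal{H}}\|<1$, $U_{2}$ is invertible, and $U_{2}\Phi$ is again a Riesz basis, supplies precisely the verification the paper leaves implicit. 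In short: same skeleton for the invertibility, but your write-up closes a real gap in the paper's final step.
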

\begin{proof}
 By using the assumption (\ref{1}) we obtain
\begin{eqnarray*}
\left\|\mathbf{G}_{U_{1},\Phi,U_{2}\Phi}\mathbf{G}_{U_{1},\Phi,\Phi}^{-1}- I_{\ell^{2}}\right\|&=&\left\|\left(\mathbf{G}_{U_{1},\Phi,U_{2}\Phi}-\mathbf{G}_{U_{1},\Phi,\Phi}\right)\mathbf{G}_{U_{1},\Phi,\Phi}^{-1}\right\|\\
&\leq&\left\|\mathbf{G}_{U_{1},\Phi,U_{2}\Phi}-\mathbf{G}_{U_{1},\Phi,\Phi}\right\|\left\|\mathbf{G}_{U_{1},\Phi,\Phi}^{-1}\right\|\\
&=&\left\|T_{\Phi}^{*}U_{1}\left(U_{2}-I_{\mathcal{H}}\right)T_{\Phi}\right\|\left\|\mathbf{G}_{U_{1},\Phi,\Phi}^{-1}\right\|\\
&\leq& \left\|T_{\Phi}\right\|^{2}\|U_{1}\|\left\|U_{2}-I_{\mathcal{H}}\right\|\left\|\mathbf{G}_{U_{1},\Phi,\Phi}^{-1}\right\|\\
&\leq&B_{\Phi}\|U_{1}\|\left\|U_{2}-I_{\mathcal{H}}\right\|\left\|\mathbf{G}_{U_{1},\Phi,\Phi}^{-1}\right\|< 1.
\end{eqnarray*}
Then $\mathbf{G}_{U_{1},\Phi,U_{2}\Phi}\mathbf{G}_{U_{1},\Phi,\Phi}^{-1}$ is invertible and so $\mathbf{G}_{U_{1},\Phi,U_{2},\Phi}$ is invertible.
The proof of invertibility $\mathbf{G}_{U_{1},U_{2}\Phi,\Phi}$ is similar. The rest is immediately follows by Theorem \ref{sec:invertRiesz1}.
\end{proof}
 \begin{thm}
 Suppose that $\Phi$ and $\Psi$ are  Bessel sequences in $\mathcal{H}$  such that $\mathbf{G}_{U,\Phi,\Psi}$ is invertible.
 \begin{enumerate}
 \item[(1)] If $V\in \BL{\mathcal{H}}$ such that
\begin{eqnarray}\label{C1}
 \left\|U-V\right\|<\frac{1}{\left\|\mathbf{G}_
 {U,\Phi,\Psi}^{-1}\right\|\sqrt{B_{\Phi}B_{\Psi}}},
\end{eqnarray}
 then $\mathbf{G}_{V,\Phi,\Psi}$ is also invertible.
\item[(2)] If $\Xi=\{\xi_{i}\}_{i\in I}$ is a Bessel
    sequence in $\mathcal{H}$ such that
\begin{eqnarray}\label{C2}
\left(\sum_{i\in I}\left\|\psi_{i}-\xi_{i}\right\|^{2}\right)^{1/2}< \frac{1}{\left\|\mathbf{G}_
 {U,\Phi,\Psi}^{-1}\right\|\sqrt{B_{\Phi}}\left\|U\right\|},
\end{eqnarray}
 then $\mathbf{G}_{U,\Phi,\Xi}$ is invertible.

\item[(3)] If $\Theta=\{\theta_{i}\}_{i\in I}$ is a
    Bessel sequence in $\mathcal{H}$ such that
    \begin{eqnarray*}\label{C3}
\left(\sum_{i\in I}\left\|\phi_{i}-\theta_{i}\right\|^{2}\right)^{1/2}< \frac{1}{\left\|\mathbf{G}_
 {U,\Phi,\Psi}^{-1}\right\|\sqrt{B_{\Psi}}\left\|U\right\|},
    \end{eqnarray*}
    then $\mathbf{G}_{U,\Theta,\Psi}$ is invertible.
 \end{enumerate}
 \end{thm}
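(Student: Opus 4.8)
The plan is to reduce all three parts to the single invertibility criterion of Proposition \ref{invOP}, applied on $\ell^2$ with $U_1=\mathbf{G}_{U,\Phi,\Psi}$, which is invertible by hypothesis. In each case I would introduce the perturbed matrix as $U_2$ and estimate the operator norm of $U_2-U_1$; once this norm is shown to be strictly smaller than $1/\left\|\mathbf{G}_{U,\Phi,\Psi}^{-1}\right\|$, Proposition \ref{invOP} immediately yields the invertibility of $U_2$, which is exactly the claimed conclusion.

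For part (1), using $\mathbf{G}_{V,\Phi,\Psi}=T_{\Phi}^{*}VT_{\Psi}$ and $\mathbf{G}_{U,\Phi,\Psi}=T_{\Phi}^{*}UT_{\Psi}$, I factor the difference as
\begin{eqnarray*}
\mathbf{G}_{V,\Phi,\Psi}-\mathbf{G}_{U,\Phi,\Psi}=T_{\Phi}^{*}\left(V-U\right)T_{\Psi},
\end{eqnarray*}
so that $\left\|\mathbf{G}_{V,\Phi,\Psi}-\mathbf{G}_{U,\Phi,\Psi}\right\|\le\left\|T_{\Phi}\right\|\left\|U-V\right\|\left\|T_{\Psi}\right\|\le\sqrt{B_{\Phi}B_{\Psi}}\left\|U-V\right\|$. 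The hypothesis (\ref{C1}) is precisely the statement that this quantity is below $1/\left\|\mathbf{G}_{U,\Phi,\Psi}^{-1}\right\|$, and so Proposition \ref{invOP} applies.

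Parts (2) and (3) follow the same template, but moving the perturbation into the sequence slots. Here the key step is to control the synthesis operator of the difference sequence. For part (2), writing $T_{\Xi}-T_{\Psi}=T_{\Xi-\Psi}$ for the sequence $\{\xi_{i}-\psi_{i}\}_{i\in I}$, a Cauchy--Schwarz estimate gives
\begin{eqnarray*}
\left\|\sum_{i\in I}c_{i}\left(\xi_{i}-\psi_{i}\right)\right\|\le\left(\sum_{i\in I}\left|c_{i}\right|^{2}\right)^{1/2}\left(\sum_{i\in I}\left\|\xi_{i}-\psi_{i}\right\|^{2}\right)^{1/2},
\end{eqnarray*}
so that $\left\|T_{\Xi}-T_{\Psi}\right\|\le\left(\sum_{i\in I}\left\|\xi_{i}-\psi_{i}\right\|^{2}\right)^{1/2}$. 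Then
\begin{eqnarray*}
\mathbf{G}_{U,\Phi,\Xi}-\mathbf{G}_{U,\Phi,\Psi}=T_{\Phi}^{*}U\left(T_{\Xi}-T_{\Psi}\right)
\end{eqnarray*}
has norm at most $\sqrt{B_{\Phi}}\left\|U\right\|\left(\sum_{i\in I}\left\|\xi_{i}-\psi_{i}\right\|^{2}\right)^{1/2}$, which is below $1/\left\|\mathbf{G}_{U,\Phi,\Psi}^{-1}\right\|$ by (\ref{C2}); Proposition \ref{invOP} then finishes the argument. Part (3) is symmetric: one factors $\mathbf{G}_{U,\Theta,\Psi}-\mathbf{G}_{U,\Phi,\Psi}=\left(T_{\Theta}-T_{\Phi}\right)^{*}UT_{\Psi}$ and applies the same synthesis-operator bound to $\{\theta_{i}-\phi_{i}\}_{i\in I}$, now together with $\left\|T_{\Psi}\right\|\le\sqrt{B_{\Psi}}$.

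I expect no genuine obstacle here: the only point requiring care is that the difference sequences $\{\xi_{i}-\psi_{i}\}_{i\in I}$ and $\{\theta_{i}-\phi_{i}\}_{i\in I}$ have bounded synthesis operators, which the Cauchy--Schwarz bound guarantees under the stated $\ell^2$-summability hypotheses. These sums are automatically finite, since the right-hand sides of (\ref{C2}) and of the analogous bound in (3) are finite. Everything else is a direct norm estimate feeding into the standard Neumann-series perturbation argument of Proposition \ref{invOP}.
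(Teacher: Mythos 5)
Your proposal is correct and follows essentially the same route as the paper: both factor the difference of the two Gram matrices (as $T_{\Phi}^{*}(U-V)T_{\Psi}$, $T_{\Phi}^{*}U(T_{\Xi}-T_{\Psi})$, or $(T_{\Theta}-T_{\Phi})^{*}UT_{\Psi}$), bound it via the Bessel bounds together with the Cauchy--Schwarz estimate $\left\|T_{\Psi}-T_{\Xi}\right\|\leq\left(\sum_{i\in I}\left\|\psi_{i}-\xi_{i}\right\|^{2}\right)^{1/2}$, and conclude by a Neumann-series perturbation argument. The only cosmetic difference is that the paper multiplies by $\mathbf{G}_{U,\Phi,\Psi}^{-1}$ and checks $\left\|I_{\ell^{2}}-\mathbf{G}_{U,\Phi,\Psi}^{-1}\mathbf{G}_{U,\Phi,\Xi}\right\|<1$, whereas you feed the raw norm bound directly into Proposition \ref{invOP}; these are the same argument in different packaging.
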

 \begin{proof}  By assumption (\ref{C1}) we have
 \begin{eqnarray*}
  \left\|I_{\ell^{2}}-\mathbf{G}_{U,\Phi,\Psi}^{-1}\mathbf{G}_{V,\Phi,\Psi}\right\|&=&\left\|\mathbf{G}_{U,\Phi,\Psi}^{-1}\left(\mathbf{G}_{U,\Phi,\Psi}-\mathbf{G}_{V,\Phi,\Psi}\right)\right\|\\
&\leq& \left\|\mathbf{G}_{U,\Phi,\Psi}^{-1}\right\| \left\|\mathbf{G}_{U,\Phi,\Psi}-\mathbf{G}_{V,\Phi,\Psi}\right\|\\
&=&\left\|\mathbf{G}_{U,\Phi,\Psi}^{-1}\right\|\left\|T_{\Phi}^{*}
 UT_{\Psi}-T_{\Phi}^{*}VT_{\Psi}\right\|\\
 &=&\left\|\mathbf{G}_{U,\Phi,\Psi}^{-1}\right\|\left\|T_{\Phi}^{*}(U-V)T_{\Psi}\right\|\\
 &\leq& \left\|\mathbf{G}_{U,\Phi,\Psi}^{-1}\right\|\sqrt{B_{\Phi}B_{\Psi}}\|U-V\|< 1,
 \end{eqnarray*}
 and  $\mathbf{G}_{V,\Phi,\Psi}$ is also invertible. This proves (1). To show (2) note that
 \begin{eqnarray}\label{Tsi}
 \left\|T_{\Psi}-T_{\Xi}\right\|\leq \left(\sum_{i\in I}\left\|\psi_{i}-\xi_{i}\right\|^{2}\right)^{1/2}.
 \end{eqnarray}
 Using (\ref{C2}) follows that
  \begin{eqnarray*}
  \left\|I_{\ell^{2}}-\mathbf{G}_{U,\Phi,\Psi}^{-1}\mathbf{G}_{U,\Phi,\Xi}\right\|&=&\left\|\mathbf{G}_{U,\Phi,\Psi}^{-1}\left(\mathbf{G}_{U,\Phi,\Psi}-\mathbf{G}_{U,\Phi,\Xi}\right)\right\|\\
 &\leq& \left\|\mathbf{G}_{U,\Phi,\Psi}^{-1}\right\|\left\|\mathbf{G}_{U,\Phi,\Psi}-\mathbf{G}_{U,\Phi,\Xi}\right\|\\
 &=&\left\|\mathbf{G}_{U,\Phi,\Psi}^{-1}\right\|
 \left\|T_{\Phi}^{*}UT_{\Psi}-T_{\Phi}^{*}UT_{\Xi}\right\|\\
 &=&\left\|\mathbf{G}_{U,\Phi,\Psi}^{-1}\right\|\left\|T_{\Phi}^{*}U(T_{\Psi}-T_{\Xi})\right\|\\
&\leq& \left\|\mathbf{G}_{U,\Phi,\Psi}^{-1}\right\|\sqrt{B_{\Phi}}\|U\|\left(\sum_{i\in I}\left\|\psi_{i}-\xi_{i}\right\|^{2}\right)^{1/2}
< 1.
 \end{eqnarray*}  Hence, $\mathbf{G}_{U,\Phi,\Xi}$ is invertible by the invertibility $\mathbf{G}_{U,\Phi,\Psi}^{-1}\mathbf{G}_{U,\Phi,\Xi}$.
Finally,
 (3) follows similarly.
  \end{proof}
 
Note that the condition $\left(\sum_{i\in I}\left\|\psi_{i}-\xi_{i}\right\|^{2}\right)^{1/2}$ is a typical condition for results dealing with  the perturbation of frames \cite{ole1n} or 'nearness of sequences' \cite{aria07,xxlframoper1}.

 \begin{thm}
 Let $\Psi=\{\psi_{i}\}_{i\in I}$  be a Bessel sequence and $\Phi=\{\phi_{i}\}_{i\in I}$   a Riesz basis  such that
 \begin{eqnarray*}
\sum_{i\in I}\left\|U\psi_{i}-\phi_{i}\right\|^{2}< \frac{A_{{\Phi}}^2}{B_{{\Phi}}},
 \end{eqnarray*}
 where $A_{{\Phi}}$ and $B_{{\Phi}}$ are lower and upper bounds of $\Phi$, respectively.
 Then $\mathbf{G}_{U,\Phi,\Psi}$ is invertible  and
 \begin{eqnarray*}
 \mathbf{G}_{U,\Phi,\Psi}^{-1}=\sum_{k=0}^{\infty}\left(I_{\ell^{2}}-T_{\Phi}^{-1}UT_{\Psi}\right)^{k}\mathbf{G}_{\Phi}^{-1}.
 \end{eqnarray*}
 \end{thm}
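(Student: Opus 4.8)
The plan is to invoke the Neumann-series invertibility criterion, Proposition \ref{invOP}, with the invertible reference operator $U_1 = \mathbf{G}_{\Phi}$ and the target $U_2 = \mathbf{G}_{U,\Phi,\Psi}$. Since $\Phi$ is a Riesz basis, $T_{\Phi}$ is a bounded bijection on $\ell^2$, so $\mathbf{G}_{\Phi} = T_{\Phi}^* T_{\Phi}$ is positive, self-adjoint and invertible with spectrum contained in $[A_{\Phi},B_{\Phi}]$; in particular $\|\mathbf{G}_{\Phi}^{-1}\| \leq 1/A_{\Phi}$, equivalently $1/\|\mathbf{G}_{\Phi}^{-1}\| \geq A_{\Phi}$. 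This is the invertible operator I will perturb.

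First I would control the deviation $\mathbf{G}_{\Phi} - \mathbf{G}_{U,\Phi,\Psi}$. Writing both matrices as operator products gives $\mathbf{G}_{\Phi} - \mathbf{G}_{U,\Phi,\Psi} = T_{\Phi}^*\left(T_{\Phi} - U T_{\Psi}\right) = T_{\Phi}^*\left(T_{\Phi} - T_{U\Psi}\right)$, where $U\Psi = \{U\psi_i\}_{i\in I}$ is Bessel because $\Psi$ is Bessel and $U$ is bounded. The standard synthesis estimate, as in \eqref{Tsi}, yields $\|T_{\Phi} - T_{U\Psi}\| \leq \left(\sum_{i\in I}\|\phi_i - U\psi_i\|^2\right)^{1/2} =: \mu$, so that for every $h\in\ell^2$
\[
\left\|\left(\mathbf{G}_{U,\Phi,\Psi} - \mathbf{G}_{\Phi}\right)h\right\| \leq \|T_{\Phi}^*\|\,\|T_{\Phi} - T_{U\Psi}\|\,\|h\| \leq \sqrt{B_{\Phi}}\,\mu\,\|h\|.
\]
The hypothesis $\mu^2 < A_{\Phi}^2/B_{\Phi}$ reads $\sqrt{B_{\Phi}}\,\mu < A_{\Phi} \leq 1/\|\mathbf{G}_{\Phi}^{-1}\|$, so the perturbation constant $\upsilon = \sqrt{B_{\Phi}}\,\mu$ meets the requirement of Proposition \ref{invOP}. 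Invertibility of $\mathbf{G}_{U,\Phi,\Psi}$ follows, together with the abstract series $\mathbf{G}_{U,\Phi,\Psi}^{-1} = \sum_{k=0}^{\infty}\left[\mathbf{G}_{\Phi}^{-1}\left(\mathbf{G}_{\Phi} - \mathbf{G}_{U,\Phi,\Psi}\right)\right]^{k}\mathbf{G}_{\Phi}^{-1}$.

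To reach the stated closed form I would simplify the bracket. Using the factorization $\mathbf{G}_{\Phi}^{-1} = T_{\Phi}^{-1}\left(T_{\Phi}^*\right)^{-1}$, valid precisely because $T_{\Phi}$ is invertible for a Riesz basis, together with $\mathbf{G}_{\Phi} - \mathbf{G}_{U,\Phi,\Psi} = T_{\Phi}^*\left(T_{\Phi} - U T_{\Psi}\right)$, the adjoint factors cancel:
\[
\mathbf{G}_{\Phi}^{-1}\left(\mathbf{G}_{\Phi} - \mathbf{G}_{U,\Phi,\Psi}\right) = T_{\Phi}^{-1}\left(T_{\Phi} - U T_{\Psi}\right) = I_{\ell^2} - T_{\Phi}^{-1}U T_{\Psi}.
\]
Substituting this into each power of the series collapses it to $\sum_{k=0}^{\infty}\left(I_{\ell^2} - T_{\Phi}^{-1}U T_{\Psi}\right)^{k}\mathbf{G}_{\Phi}^{-1}$, which is the claimed identity.

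Since every step is a direct perturbation estimate, I do not anticipate a genuine obstacle; the delicate point is matching the constants. One must use that, for a Riesz basis, the single pair of bounds $A_{\Phi},B_{\Phi}$ simultaneously controls $\|T_{\Phi}^*\| \leq \sqrt{B_{\Phi}}$ and $\|\mathbf{G}_{\Phi}^{-1}\| \leq 1/A_{\Phi}$, so that the lone hypothesis $\sum_{i\in I}\|U\psi_i - \phi_i\|^2 < A_{\Phi}^2/B_{\Phi}$ is exactly what forces $\upsilon < 1/\|\mathbf{G}_{\Phi}^{-1}\|$. The other place needing attention is the cancellation $\mathbf{G}_{\Phi}^{-1}\left(\mathbf{G}_{\Phi} - \mathbf{G}_{U,\Phi,\Psi}\right) = I_{\ell^2} - T_{\Phi}^{-1}U T_{\Psi}$, which turns the abstract Neumann series into the announced $T_{\Phi}^{-1}U T_{\Psi}$ form and hinges on the invertibility of $T_{\Phi}$.
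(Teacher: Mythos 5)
Your proposal is correct and follows essentially the same route as the paper: both apply Proposition \ref{invOP} with reference operator $\mathbf{G}_{\Phi}$, bound the perturbation by $\left\|T_{\Phi}^{*}\right\|\left\|T_{\Phi}-UT_{\Psi}\right\|\leq \sqrt{B_{\Phi}}\left(\sum_{i\in I}\left\|U\psi_{i}-\phi_{i}\right\|^{2}\right)^{1/2}<A_{\Phi}\leq \left\|\mathbf{G}_{\Phi}^{-1}\right\|^{-1}$, and collapse the Neumann series via $\mathbf{G}_{\Phi}^{-1}T_{\Phi}^{*}=T_{\Phi}^{-1}$. If anything, your version is slightly cleaner, since you keep the strict inequality explicit where the paper's displayed chain blurs it into non-strict ones.
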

 \begin{proof}
Since $\Phi$ is a Riesz basis, we conclude that $\mathbf{G}_{\Phi}$ is invertible and
\begin{eqnarray*}\left\|\mathbf{G}_{\Phi}^{-1}\right\|=\left\|T_{\Phi}^{-1}(T_{\Phi}^{*})^{-1}\right\|\leq A_{{\Phi}}^{-1}.
\end{eqnarray*}
 Therefore,
 \begin{eqnarray*}
 \left\|\mathbf{G}_{U,\Phi,\Psi}-\mathbf{G}_{\Phi}\right\|&=&
 \left\|T_{\Phi}^{*}UT_{\Psi}-T_{\Phi}^{*}T_{\Phi}\right\|\\
 &=&\sqrt{B_{{\Phi}}}\left\|UT_{\Psi}-T_{\Phi}\right\|\\
&\leq&\sqrt{B_{{\Phi}}}\left(\sum_{i\in I}\left\|U\psi_{i}-\phi_{i}\right\|^{2}\right)^{1/2}\\
&\leq&A_{{\Phi}}\ \leq \left\|\mathbf{G}_{\Phi}^{-1}\right\|^{-1}.
 \end{eqnarray*}
Hence, $\mathbf{G}_{U,\Phi,\Psi}$ is invertible by  Proposition \ref{invOP}.  Moreover, by Proposition \ref{invOP} we have
\begin{eqnarray*}
\mathbf{G}_{U,\Phi,\Psi}^{-1}&=&\sum_{k=0}^{\infty}\left(\mathbf{G}_{\Phi}^{-1}
\left(\mathbf{G}_{\Phi}-\mathbf{G}_{U,\Phi,\Psi}\right)\right)^{k}\mathbf{G}_{\Phi}^{-1}\\
&=&\sum_{k=0}^{\infty}\left(\mathbf{G}_{\Phi}^{-1}T_{\Phi}^{*}
\left(T_{\Phi}-UT_{\Psi}\right)\right)^{k}\mathbf{G}_{\Phi}^{-1}\\
&=&\sum_{k=0}^{\infty}
\left(I_{\ell^2}-T_{\Phi}^{-1}UT_{\Psi}\right)^{k}\mathbf{G}_{\Phi}^{-1}.
\end{eqnarray*}
 \end{proof}

Now we are ready to state our main result about the stability of
$U$-cross Gram matrices.
\begin{thm}
Let $U$ and $V\in \BL{\mathcal{H}},$  $\Phi=\{\phi_{i}\}_{i\in I}$ and $\Psi=\{\psi_{i}\}_{i\in I}$
be frames. Let
$\mathbf{G}_{U,\Phi,\Psi}$ be invertible. If
$\Xi=\{\xi_{i}\}_{i\in I}$ and $\Theta=\{\theta_{i}\}_{i\in I}$
are Bessel sequences such that
\begin{eqnarray}\label{shart1}
\left\|\sum_{i\in I}c_{i}\left(\psi_{i}-\theta_{i}\right)\right\|+\left\|\sum_{i\in I}c_{i}\left(\phi_{i}-\xi_{i}\right)\right\|&\leq&\lambda_{1}\left\|\sum_{i\in I}c_{i}\psi_{i}\right\|
+\lambda_{2}\left\|\sum_{i\in I}c_{i}\phi_{i}\right\|\\  
&&+\lambda_{3}\left\|\sum_{i\in I}c_{i}\xi_{i}\right\|+\lambda_{4}\left\|\sum_{i\in I}c_{i}\theta_{i}\right\|, \nonumber 
\end{eqnarray} 
 for all $c=\{c_{i}\}_{i\in I}\in \ell^{2}$, and
\begin{equation}\label{shart2}
\left\|U-V\right\|<\mu,\qquad \mu+2\|U\|\lambda<\frac{\sqrt{A_{\Psi}A_{\Phi}}}{\left\|U^{-1}\right\|B}, \  \text{and}\quad
\lambda\left(1+3\sqrt{\frac{B}{A}}\right)<1,
\end{equation}
where $B=max\{B_{\Phi},B_{\Psi},B_{\Xi},B_{\Theta}\}$,
$\lambda=\lambda_{1}+\lambda_{2}+\lambda_{3}+\lambda_{4}$ and
$A_{\Psi}$, $A_{\Phi}$ are lower bounds of $\Psi$ and $\Phi$,
respectively. Then $\mathbf{G}_{V,\Xi,\Theta}$ is invertible and
$\Xi$ and $\Theta$ are Riesz bases.
\end{thm}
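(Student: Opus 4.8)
The plan is to realize $\mathbf{G}_{V,\Xi,\Theta}$ as a small perturbation of the invertible operator $\mathbf{G}_{U,\Phi,\Psi}$, invoke Proposition \ref{invOP}, and separately treat $\Xi,\Theta$ as perturbations of Riesz bases. First I would record what invertibility of $\mathbf{G}_{U,\Phi,\Psi}$ buys us: since $\Phi$ and $\Psi$ are frames they are complete Bessel sequences, hence upper semi-frames, so Theorem \ref{sec:invertRiesz1} forces $\Phi$ and $\Psi$ to be Riesz bases and $U$ to be invertible. In particular $T_{\Phi}$ and $T_{\Psi}$ are invertible with $\|T_{\Phi}^{-1}\|\le A_{\Phi}^{-1/2}$ and $\|T_{\Psi}^{-1}\|\le A_{\Psi}^{-1/2}$, and from $\mathbf{G}_{U,\Phi,\Psi}^{-1}=T_{\Psi}^{-1}U^{-1}(T_{\Phi}^{*})^{-1}$ I obtain $\|\mathbf{G}_{U,\Phi,\Psi}^{-1}\|\le \|U^{-1}\|/\sqrt{A_{\Phi}A_{\Psi}}$, which is exactly the quantity controlled by (\ref{shart2}).

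Next I would extract from the coupled hypothesis (\ref{shart1}) two usable operator-norm estimates. Since the two terms on its left-hand side are nonnegative, each is dominated by their sum, and bounding every analysis term on the right by $\sqrt{B}\,\|c\|$ gives $\|T_{\Phi}-T_{\Xi}\|\le\lambda\sqrt{B}$ and $\|T_{\Psi}-T_{\Theta}\|\le\lambda\sqrt{B}$, with $\lambda=\lambda_1+\lambda_2+\lambda_3+\lambda_4$. With these in hand I would estimate the defect through the telescoping decomposition
\[
\mathbf{G}_{V,\Xi,\Theta}-\mathbf{G}_{U,\Phi,\Psi}=T_{\Xi}^{*}(V-U)T_{\Theta}+(T_{\Xi}-T_{\Phi})^{*}UT_{\Theta}+T_{\Phi}^{*}U(T_{\Theta}-T_{\Psi}),
\]
and, using $\|V-U\|<\mu$ together with the Bessel bounds $\le\sqrt{B}$, arrive at $\|\mathbf{G}_{V,\Xi,\Theta}-\mathbf{G}_{U,\Phi,\Psi}\|\le B(\mu+2\|U\|\lambda)$. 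The middle inequality of (\ref{shart2}) then makes this strictly less than $\sqrt{A_{\Phi}A_{\Psi}}/\|U^{-1}\|\le 1/\|\mathbf{G}_{U,\Phi,\Psi}^{-1}\|$, so Proposition \ref{invOP} (with $U_1=\mathbf{G}_{U,\Phi,\Psi}$, $U_2=\mathbf{G}_{V,\Xi,\Theta}$, both bounded since $\Xi,\Theta$ are Bessel) yields the invertibility of $\mathbf{G}_{V,\Xi,\Theta}$.

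For the Riesz-basis claim I would \emph{not} appeal to Theorem \ref{sec:invertRiesz1} applied to $\mathbf{G}_{V,\Xi,\Theta}$, because invertibility of a $U$-cross Gram matrix says nothing about completeness of $\Xi,\Theta$; instead I would use the perturbation condition directly. Decoupling (\ref{shart1}) by keeping the $\Phi,\Xi$ terms and estimating the remaining two by $\sqrt{B}\,\|c\|$ gives
\[
\Big\|\sum_i c_i(\phi_i-\xi_i)\Big\|\le \lambda_2\Big\|\sum_i c_i\phi_i\Big\|+\lambda_3\Big\|\sum_i c_i\xi_i\Big\|+(\lambda_1+\lambda_4)\sqrt{B}\,\Big(\sum_i|c_i|^2\Big)^{1/2},
\]
a Paley--Wiener-type perturbation of the Riesz basis $\Phi$ (see \cite{Chr08}); its hypothesis reduces to $\lambda_2+(\lambda_1+\lambda_4)\sqrt{B/A_{\Phi}}<1$ and $\lambda_3<1$, both implied by $\lambda(1+3\sqrt{B/A})<1$ with $A=\min\{A_{\Phi},A_{\Psi}\}$. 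Hence $\Xi$ is a Riesz basis, and the symmetric estimate (keeping the $\Psi,\Theta$ terms) shows $\Theta$ is a Riesz basis.

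The routine parts are the two norm estimates and the Neumann-series step. The one genuinely delicate point is the Riesz-basis conclusion: the hypothesis (\ref{shart1}) entangles all four sequences, so the crux is to split it cleanly into two independent perturbation inequalities and to verify that the single scalar condition $\lambda(1+3\sqrt{B/A})<1$ is strong enough to meet the perturbation threshold for each. This is where the (non-sharp) factor $3$ and the quantity $A$ enter, and it is the step I would check most carefully.
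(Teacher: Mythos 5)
Your proposal is correct, and the invertibility half coincides with the paper's own argument almost verbatim: Theorem \ref{sec:invertRiesz1} gives that $\Phi$, $\Psi$ are Riesz bases and $U$ is invertible, whence $\left\|\mathbf{G}_{U,\Phi,\Psi}^{-1}\right\|\leq \left\|U^{-1}\right\|/\sqrt{A_{\Phi}A_{\Psi}}$; the hypothesis (\ref{shart1}) is relaxed to the operator-norm bounds $\left\|T_{\Phi}-T_{\Xi}\right\|\leq\lambda\sqrt{B}$, $\left\|T_{\Psi}-T_{\Theta}\right\|\leq\lambda\sqrt{B}$; a three-term telescoping decomposition (the paper splits off $T_{\Xi}^{*}(V-U)T_{\Theta}$, $T_{\Xi}^{*}U(T_{\Theta}-T_{\Psi})$ and $(T_{\Phi}-T_{\Xi})^{*}UT_{\Psi}$, while you order the telescope differently, which is immaterial) yields the same bound $B\left(\mu+2\|U\|\lambda\right)$, and Proposition \ref{invOP} concludes. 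Where you genuinely diverge is the Riesz-basis claim. The paper first applies the frame perturbation theorem (Theorem 5.6.1 of \cite{Chr08}) to conclude that $\Xi$ and $\Theta$ are frames --- hence complete, hence upper semi-frames --- and only afterwards invokes Theorem \ref{sec:invertRiesz1} for the invertible matrix $\mathbf{G}_{V,\Xi,\Theta}$; so your objection that invertibility alone says nothing about completeness is right in principle, but it does not hit the paper's actual argument, since the completeness hypothesis of Theorem \ref{sec:invertRiesz1} is supplied beforehand by the perturbation step (which is also where the third condition of (\ref{shart2}) gets used there). Your route instead decouples (\ref{shart1}) into two Paley--Wiener-type inequalities and applies the Riesz-basis-preserving form of the Casazza--Christensen perturbation theorem (also in \cite{Chr08,ole1n}) directly to the Riesz bases $\Phi$ and $\Psi$. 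Both are sound; yours makes the Riesz-basis conclusion entirely independent of the invertibility of $\mathbf{G}_{V,\Xi,\Theta}$ (indeed it shows the perturbation conditions alone already force $\Xi$, $\Theta$ to be Riesz bases), at the price of needing the stronger Riesz-basis version of the perturbation theorem, whereas the paper gets by with the frame version plus its own Theorem \ref{sec:invertRiesz1}. Your threshold check is also correct: $\lambda_{2}+(\lambda_{1}+\lambda_{4})\sqrt{B/A_{\Phi}}\leq\lambda\left(1+\sqrt{B/A}\right)<1$ and $\lambda_{3}\leq\lambda<1$, which confirms your remark (implicit in the paper as well) that the factor $3$ in (\ref{shart2}) is not sharp.
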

\begin{proof}
First note that from (\ref{shart1}) it easily follows that
\begin{equation}\label{T}
\left\|T_{\Psi}-T_{\Theta}\right\|+\left\|T_{\Phi}-T_{\Xi}\right\|\leq
\lambda_{1}\sqrt{B_{\Psi}}+\lambda_{2}\sqrt{B_{\Theta}}+
\lambda_{3}\sqrt{B_{\Phi}}+\lambda_{4}\sqrt{B_{\Xi}}.
\end{equation}
This immediately implies that $\Xi$ and $\Theta$ are frames
by Theorem 5.6.1 of \cite{Chr08}.
On the other hand, Theorem
\ref{sec:invertRiesz1} implies that $\Phi$ and $\Psi$  are Riesz bases and $U$
is invertible. In particular,
\begin{equation*}
\mathbf{G}_{U,\Phi,\Psi}^{-1}=
\mathbf{G}_{U^{-1},\widetilde{\Psi},\widetilde{\Phi}}.
\end{equation*}
This shows that
\begin{equation}\label{shart3}
\frac{\sqrt{A_{\Psi}A_{\Phi}}}{\left\|U^{-1}\right\|}\leq \frac{1}{\left\|\mathbf{G}_{U,\Phi,\Psi}^{-1}\right\|},
\end{equation}
where $A_{\Psi}$ and $A_{\Phi}$ are the lower frame bounds of
$\Psi$ and $\Phi$, respectively. Combining  (\ref{shart2}),
(\ref{T}) and (\ref{shart3}) we obtain
\begin{eqnarray*}
\left\|\mathbf{G}_{V,\Xi,\Theta}-\mathbf{G}_{U,\Phi,\Psi}\right\|&=&\left\|\mathbf{G}_{V,\Xi,\Theta}-\mathbf{G}_{U,\Xi,\Theta}+\mathbf{G}_{U,\Xi,\Theta}-\mathbf{G}_{U,\Xi,\Psi}+\mathbf{G}_{U,\Xi,\Psi}-\mathbf{G}_{U,\Phi,\Psi}\right\|\\
&\leq&\left\|T_{\Xi}^{*}(V-U)T_{\Theta}\right\|+\left\|T_{\Xi}^{*}U(T_{\Theta}-T_{\Psi})\right\|+\left\|(T_{\Phi}^{*}-T_{\Xi}^{*})UT_{\Psi}\right\|\\
&\leq&\mu\sqrt{B_{\Xi}B_{\Theta}}+\sqrt{B_{\Xi}}\|U\|\left\|T_{\Theta}-T_{\Psi}\right\|+\|U\|\sqrt{B_{\Psi}}\left\|T_{\Phi}-T_{\Xi}\right\|\\
&\leq&\mu\sqrt{B_{\Xi}B_{\Theta}}+(\sqrt{B_{\Xi}}+\sqrt{B_{\Psi}})\|U\|(\lambda_{1}\sqrt{B_{\Psi}}+\lambda_{2}\sqrt{B_{\Theta}}\\
&&+
\lambda_{3}\sqrt{B_{\Phi}}+\lambda_{4}\sqrt{B_{\Xi}})\\
&\le & B\left(\mu+2\|U\|\lambda\right) \\
&\leq& \frac{\sqrt{A_{\Psi}A_{\Phi}}}{\left\|U^{-1}\right\|}\leq \left\|\mathbf{G}_{U,\Phi,\Psi}^{-1}\right\|^{-1}.
\end{eqnarray*}

Hence, $\mathbf{G}_{V,\Xi,\Theta}$ is invertible by  Proposition
\ref{invOP}.  In particular, $\Xi$ and $\Theta$ are Riesz
bases by Theorem \ref{sec:invertRiesz1}.
\end{proof}
\begin{cor}
Let $U\in \BL{\mathcal{H}},$ $\Phi=\{\phi_{i}\}_{i\in I}$ and $\Psi=\{\psi_{i}\}_{i\in I}$
be frames in $\Hil$.
$\Phi^{n}=\{\phi_{i}^{n}\}_{i\in I}\rightarrow
\Phi$ \footnote{$\Phi^{n}=\{\phi_{i}^{n}\}_{i\in I}\rightarrow
\Phi$ if $\forall \epsilon > 0$ $\exists N$ such that $\sum
\limits_{i \in I} \norm{}{\phi_i^n - \phi_i}^{ 2} < \epsilon$
for all $n \ge N$}, $\Psi^{n}=\{\psi_{i}^{n}\}_{i\in I}\rightarrow
\Psi$ in {$\mathcal{H}$} and $U_n\rightarrow U$ in $\BL{
\mathcal{H}}$, then $\mathbf{G}_{U_n,\Phi^n,\Psi^n}\rightarrow
\mathbf{G}_{U,\Phi,\Psi}$ in $\BL {\ell^{2}}$.
\end{cor}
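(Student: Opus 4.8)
The plan is to reduce everything to the basic identity $\mathbf{G}_{U,\Phi,\Psi}=T_{\Phi}^{*}UT_{\Psi}$ and then control the difference by a telescoping decomposition. Writing
\begin{eqnarray*}
\mathbf{G}_{U_n,\Phi^n,\Psi^n}-\mathbf{G}_{U,\Phi,\Psi}
&=&T_{\Phi^n}^{*}U_nT_{\Psi^n}-T_{\Phi}^{*}UT_{\Psi}\\
&=&(T_{\Phi^n}^{*}-T_{\Phi}^{*})U_nT_{\Psi^n}
+T_{\Phi}^{*}(U_n-U)T_{\Psi^n}
+T_{\Phi}^{*}U(T_{\Psi^n}-T_{\Psi}),
\end{eqnarray*}
I would estimate the operator norm of each of the three summands separately and show that each tends to $0$ in $\BL{\ell^{2}}$. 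This is in the same spirit as the stability results of this section, with the difference of two matrices simply split across its three varying ingredients $\Phi^n$, $\Psi^n$, $U_n$.

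The first ingredient is the control of the synthesis operators. Exactly as in (\ref{Tsi}), the hypothesis $\Phi^{n}\to\Phi$ yields $\|T_{\Phi^n}-T_{\Phi}\|\le\left(\sum_{i\in I}\|\phi_i^n-\phi_i\|^2\right)^{1/2}\to 0$, and likewise $\|T_{\Psi^n}-T_{\Psi}\|\to 0$. In particular each $\Phi^{n}$ and $\Psi^{n}$ is again a Bessel sequence, so that the matrices $\mathbf{G}_{U_n,\Phi^n,\Psi^n}$ are genuinely bounded on $\ell^{2}$; and, more importantly, the synthesis operators become uniformly bounded for large $n$, since $\|T_{\Psi^n}\|\le\sqrt{B_{\Psi}}+\|T_{\Psi^n}-T_{\Psi}\|\le\sqrt{B_{\Psi}}+1$ and $\|U_n\|\le\|U\|+1$ once $n$ is large enough (using $U_n\to U$ in $\BL{\Hil}$). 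I would fix such an $N$ and work only with $n\ge N$.

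With these uniform bounds in hand, the three terms are estimated by submultiplicativity of the operator norm together with $\|T_{\Phi}^{*}\|\le\sqrt{B_{\Phi}}$: the first summand is at most $\|T_{\Phi^n}-T_{\Phi}\|\,(\|U\|+1)(\sqrt{B_{\Psi}}+1)$, the second at most $\sqrt{B_{\Phi}}\,\|U_n-U\|\,(\sqrt{B_{\Psi}}+1)$, and the third at most $\sqrt{B_{\Phi}}\,\|U\|\,\|T_{\Psi^n}-T_{\Psi}\|$. In each bound one factor of the form $\|T_{\Phi^n}-T_{\Phi}\|$, $\|T_{\Psi^n}-T_{\Psi}\|$ or $\|U_n-U\|$ tends to $0$ while the remaining factors stay bounded, so the sum tends to $0$ and the claim follows. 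No genuine obstacle arises here: the only point demanding care is the bookkeeping of the uniform bounds on $\|T_{\Psi^n}\|$ and $\|U_n\|$, which must be secured before one may pass to the limit in the product estimates, but these are immediate from the assumed convergences.
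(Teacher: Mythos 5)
Your proof is correct and takes essentially the same route as the paper's: a telescoping decomposition of $T_{\Phi^n}^{*}U_nT_{\Psi^n}-T_{\Phi}^{*}UT_{\Psi}$ into three differences, each controlled by one of the three convergences via the estimate $\left\|T_{\Psi^n}-T_{\Psi}\right\|\leq\left(\sum_{i\in I}\left\|\psi_i^n-\psi_i\right\|^{2}\right)^{1/2}$ together with boundedness of the remaining factors. The only (cosmetic) differences are that you telescope in a different order and explicitly secure the uniform bounds on $\left\|U_n\right\|$ and $\left\|T_{\Psi^n}\right\|$ for large $n$, a point the paper uses implicitly.
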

\begin{proof}
Applying (\ref{Tsi}) and assumptions we have
\begin{eqnarray*}
\left\|\mathbf{G}_{U_n,\Phi^n,\Psi^n}-\mathbf{G}_{U,\Phi,\Psi}\right\|&=&\left\|T_{\Phi^{n}}^*U_nT_{\Psi^n}-T_{\Phi}^*UT_{\Psi}\right\|\\
&\leq&\left\|T_{\Phi^{n}}^*U_nT_{\Psi^n}-T_{\Phi^{n}}^*U_nT_{\Psi}\right\|
+\left\|T_{\Phi^{n}}^*U_nT_{\Psi}-
T_{\Phi}^*UT_{\Psi}\right\|\\
&\leq&\left\|T_{\Phi^{n}}^*U_n\right\|\left\|T_{\Psi^n}-T_{\Psi}\right\|+\left\|T_{\Phi^{n}}^*U_n-T_{\Phi}^*U\right\|\left\|T_{\Psi}\right\|\\
&\leq&\left\|T_{\Phi^{n}}^*U_n\right\|\left\|T_{\Psi^n}-T_{\Psi}\right\|\\
&&+\left(\left\|T_{\Phi^{n}}^*U_n-T_{\Phi}^*U_n\right\|+\left\|T_{\Phi}^*U_n-
T_{\Phi}^*U\right\|\right)\left\|T_{\Psi}\right\|\\
&\leq&\left\|T_{\Phi^{n}}^*U_n\right\|\left\|T_{\Psi^n}-T_{\Psi}\right\|\\
&&+\left(\left\|T_{\Phi^{n}}^*-T_{\Phi}^*\right\|\left\|U_n\right\|+\left\|T_{\Phi}^*\right\|\left\|U_n-
U\right\|\right)\left\|T_{\Psi}\right\|
\rightarrow 0.
\end{eqnarray*}
\end{proof}

\section*{Acknowledgement}
The first and fourth authors were partly supported by the START
project FLAME Y551-N13 of the Austrian Science Fund (FWF). P.B.
was also partly supported by the DACH project BIOTOP I-1018-N25 of
Austrian Science Fund (FWF). He wishes to thank NuHAG for the
availability of its webpage. He also thanks Nora Simovich for help
with typing.

\end{document}